\newtheorem{rem}{Remark}[section]
\newtheorem{thm}{Theorem}[section]
\newtheorem{lem}{Lemma}[section]
\newcommand{\nabh}{\nabla_{\! h}}
\newcommand{\hf}{\nicefrac{1}{2}}
\newcommand{\nrm}[1]{\left\| #1 \right\|}
\newcommand{\ciptwo}[2]{\left\langle #1 , #2 \right\rangle}
\newcommand\dt {{\Delta t}}
\newcommand{\eipx}[2]{\left[ #1 , #2 \right]_{\rm x}}
\newcommand{\eipy}[2]{\left[ #1 , #2 \right]_{\rm y}}
\newcommand{\eipz}[2]{\left[ #1 , #2 \right]_{\rm z}}
\newcommand{\eipvec}[2]{\left[ #1 , #2 \right]}
\newcommand{\reff}[1]{{\rm (\ref{#1})}}
	\newcommand\be {\begin{equation}}
	\newcommand\ee {\end{equation}}
\newcommand{\hh}{\mbox{\boldmath$h$}}
	\title{A positivity-preserving, energy stable and convergent numerical scheme for the Poisson-Nernst-Planck system}
	\date{\today}
\begin{document}
	
	\author{
Chun Liu \thanks{Department of Applied Mathematics, Illinois Institute of Technology, IL 60616, USA (cliu124@iit.edu)}
\and	
Cheng Wang\thanks{Department of Mathematics, University of Massachusetts, North Dartmouth, MA  02747, USA (cwang1@umassd.edu)}
	\and
Steven M. Wise\thanks{Department of Mathematics, The University of Tennessee, Knoxville, TN 37996, USA (swise1@utk.edu)} 
\and	
Xingye Yue\thanks{Department of Mathematics, Soochow University, Suzhou 215006, P.R. China (xyyue@suda.edu.cn)}
\and	
Shenggao Zhou\thanks{Department of Mathematics, Soochow University, Suzhou 215006, P.R. China (Corresponding Author: sgzhou@suda.edu.cn)}
}

 	\maketitle
	\numberwithin{equation}{section}
	
\begin{abstract}	 
In this paper we propose and analyze a finite difference numerical scheme for the Poisson-Nernst-Planck equation (PNP) system. To understand the energy structure of the PNP model, we make use of the Energetic Variational Approach (EnVarA), so that the PNP system could be reformulated as a non-constant mobility $H^{-1}$ gradient flow, with singular logarithmic energy potentials involved. To ensure the unique solvability and energy stability, the mobility function is explicitly treated, while both the logarithmic and the electric potential diffusion terms are treated implicitly, due to the convex nature of these two energy functional parts. The positivity-preserving property for both concentrations, $n$ and $p$, is established at a theoretical level. This is based on the subtle fact that the singular nature of the logarithmic term around the value of $0$ prevents the numerical solution reaching the singular value, so that the numerical scheme is always well-defined. In addition, an optimal rate convergence analysis is provided in this work, in which many highly non-standard estimates have to be involved, due to the nonlinear parabolic coefficients. The higher order asymptotic expansion (up to third order temporal accuracy and fourth order spatial accuracy),  the rough error estimate (to establish the $\ell^\infty$ bound for $n$ and $p$), and the refined error estimate have to be carried out to accomplish such a convergence result. In our knowledge, this work will be the first to combine the following three theoretical properties for a numerical scheme for the PNP system: (i) unique solvability and positivity, (ii) energy stability, and (iii) optimal rate convergence. A few numerical results are also presented in this article, which demonstrates the robustness of the proposed numerical scheme.

	\bigskip

\noindent
{\bf Key words and phrases}:
Poisson-Nernst-Planck (PNP) system, logarithmic energy potential, positivity preserving, energy stability, optimal rate convergence analysis, higher order asymptotic expansion 

\noindent
{\bf AMS subject classification}: \, 35K35, 35K55, 65M12, 65M06, 82C70	
\end{abstract}
	

\section{Introduction} 

We consider the two-particle Poisson-Nernst-Planck (PNP) system of equations  
	\begin{eqnarray} 
\partial_t n &=&  D_n \Delta n - \frac{z_0 e_0}{k_B \theta_0} \nabla \cdot \left( D_n n \nabla \phi  \right)   ,   
	\label{equation-PNP-1} 
	\\
\partial_t p &=&   D_p \Delta p + \frac{z_0 e_0}{k_B \theta_0} \nabla \cdot \left( D_p p \nabla \phi  \right)   ,   
	\label{equation-PNP-2}  
	\\
 - \varepsilon \Delta \phi &=& z_0 e_0 (p-n) + \rho^f , 
 	\label{equation-PNP-3} 
	\end{eqnarray}
where $k_B$ is the Boltzmann constant; $\theta_0$ is the absolute temperature;  $n$ and $p$ are the concentrations of negatively and positively charged ions, respectively; $\varepsilon$ is the dielectric coefficient of the solution; $z_0$ is valence of ions; $e_0$ is the charge an electron; $\phi$ is the electric potential; and $D_n$ and $D_p$ are diffusion/mobility coefficients. Boundary conditions are very important for PNP systems and must be handled carefully~\cite{Flavell14}. However, we will assume periodic boundary conditions in this work for simplicity of presentation. The analysis could be extended to more complicated, more physical boundary conditions. In addition, for simplicity of presentation in the theoretical analysis, we assume that source term, $\rho^f$, associated to the background fixed charge density, vanishes everywhere. The extension to a non-zero source term is straightforward.  

The PNP system is one of the most extensively studied models for the transport of charged particles in many physical and biological problems, including free electrons in semiconductors~\cite{jerome95, Markowich86, Markowich90}; fuel cells~\cite{Nazarov07, promislow01}; ionic particles in electrokinetic fluids~\cite{Ben02, Hunter01, Lyklema95}; phase separation and polarization for ionic liquids~\cite{Gavish16}; and ion channels in cell membranes~\cite{Bazant04, Eisenberg96, Nonner99}. The Energetic Variational Approach (EnVarA)~\cite{Eisenberg10} shows that the PNP system is the gradient flow with respect to a particular free energy. In more detail, the free energy functional of a two-particle mixture may be formulated as 
	\begin{equation}
E (n, p) = \int_\Omega\left\{ k_B \theta_0 \left( n \ln \frac{n}{n_0} + p \ln \frac{p}{p_0} \right) \right\}d{\bf x} + \frac{z_0^2 e_0^2}{2 \varepsilon} \| n - p \|_{H^{-1}}^2 , 
	\label{PNP energy}
	\end{equation} 
under the assumption that $n-p$ is of mean zero, where $n_0$ and $p_0$ are reference concentrations. The $H^{-1}$ norm is defined via
	\[
\nrm{f}_{H^{-1}} : = 	\sqrt{\left(f,f\right)_{H^{-1}}},
	\]
where 
	\[
\left(f,g\right)_{H^{-1}} := \left(\nabla\psi_f,\nabla\psi_g \right)_{L^2},
	\]
and $\psi_f\in \mathring{H}^1_{\rm per}(\Omega):= H^1_{\rm per}(\Omega)\cap \mathring{L}^2(\Omega)$ is the solution to
	\[
-\Delta \psi_f	= f\in \mathring{L}^2(\Omega) := \left\{ f\in L^2(\Omega) \, \middle| \, (f,1)_{L^2} = 0 \right\}.
	\]
Formally, then
	\[
\nrm{f}_{H^{-1}}^2  = \left( f, (-\Delta)^{-1} f \right)_{L^2} .	
	\]
The PNP system~\eqref{equation-PNP-1} -- \eqref{equation-PNP-3} is the following $H^{-1}$-like gradient flow: 
	\begin{equation} 
\partial_t n = \nabla \cdot \left(\frac{D_n}{k_B \theta_0} n \nabla \mu_n \right),  \quad  \partial_t p = \nabla \cdot \left( \frac{D_p}{k_B \theta_0} p \nabla \mu_p \right) , 
 \  \label{equation-PNP-0} 
	\end{equation}
where $\mu_n$ and $\mu_p$ are chemical potentials given by
	\begin{align} 
   \mu_n & := \delta_n E = k_B \theta_0 ( \ln \frac{n}{n_0} + 1 ) + \frac{z_0^2 e_0^2}{\varepsilon}  (-\Delta)^{-1} ( n - p)    = k_B \theta_0 ( \ln \frac{n}{n_0} + 1 ) - z_0 e_0 \phi ,        
	\label{PNP-chem pot-n} 
	\\
\mu_p &:= \delta_p E = k_B \theta_0 ( \ln \frac{p}{p_0} + 1 ) + \frac{z_0^2 e_0^2}{\varepsilon}  (-\Delta)^{-1} ( p - n)  = k_B \theta_0 ( \ln \frac{p}{p_0} + 1 ) + z_0 e_0 \phi ,       
	\label{PNP-chem pot-p}
	\end{align} 
and $\phi$ is the periodic and mean-zero solution to 
	\[
 - \varepsilon \Delta \phi = z_0 e_0 (p-n).
	\]
Of course, for the system to make sense, we require that the initial data satisfy
	\[
\frac{1}{|\Omega|}\int_\Omega n({\bf x},0)\, d{\bf x} = \frac{1}{|\Omega|}\int_\Omega p({\bf x},0)\, d{\bf x} > 0.
	\]
Notice that non-constant coefficient mobility functions are involved in the formulated gradient flow. 

It is clear that the PDE solutions are conserved, positive (in the sense that $n,p>0$, point-wise) and energy dissipative. There are a number of papers describing numerical methods for the PNP system. However, the theoretical analysis for numerical approximations turns out to be very challenging, in particular for those based on the EnVarA formulation. First, the positivity of $n$ and $p$ have to be enforced to make the numerical scheme well-defined in the EnVarA formulation. Some existing works have reported a positivity-preserving analysis~\cite{DingWangZhou_JCP2019, DingWangZhou_JCP2020, Flavell14, HuHuang_NM20, LiuH14, LiuMing_Sub20, LiuMing_CICP20, SiddWangZhou_CMS18}, while many of these analyses come from the maximum principle argument, instead of in the variational framework. Second, the energy stability has also played a central role in the study of gradient flows. Such a stability analysis has appeared in a few existing numerical works~\cite{Flavell17b, LW17, XuLiu_JCP16}, while the unique solvability and positivity-preserving analysis have been missing. Furthermore, there have been a few existing works for the convergence analysis~\cite{ding19, ProhlSchmuck_NM09, SunY16}, while these convergence estimates have been based on the perfect Laplacian operator structure for $n$ and $p$, instead of the $H^{-1}$ gradient flow structure, so that the energy estimate is not available.  Many other numerical schemes have been reported~\cite{GaoHe_JSC17, HeD15, XuLiu_JCP16, Gibou_JCP14, QianWangZhou_JCP20, TuB13, XuS14b}. However, no existing work has combined the following three theoretical features in the numerical analyses: (i) unique solvability/positivity preserving property, (ii) energy stability in the variational framework, and (iii) optimal rate convergence analysis.
 
In this paper we construct and analyze a finite difference numerical scheme, which preserves all three important theoretical features. For the energy stability property, the numerical scheme has to be based on the variational structure of the original PNP system. The mobility function is explicitly updated in the scheme to enforce the strictly elliptic nature of the operator associated with the temporal derivative part in the $H^{-1}$ gradient flow. For the chemical potential part, all the terms are treated implicitly, because of the convex nature of both the logarithmic and the electric potential diffusion energy parts (in terms of $n$ and $p$). Moreover, the positivity-preserving property, for both $n$ and $p$, will be theoretically established. Such an analysis is based on the fact that the numerical solution is equivalent to the minimization of the numerical energy functional, and the singular nature of the logarithmic term around the value of $0$ prevents the numerical solution reaching a singular value. As a result, the numerical scheme is always well-defined, and the unique solvability analysis results from the convex nature of the implicit parts in the scheme. Such a technique has been successfully applied to the Cahn-Hilliard model~\cite{chen19b, dong19b, dong20a}, while its application to the PNP system will be involved more subtle details, due to the non-constant mobility. Furthermore, the energy stability comes directly from the corresponding convexity analysis, combined with the positivity of the mobility functions. 
  
We provide an optimal rate convergence analysis for the proposed numerical scheme. The variational structure and the non-constant mobility make this analysis highly challenging, especially when compared with existing convergence estimates in~\cite{ding19, ProhlSchmuck_NM09, SunY16}, wherein a perfect Laplacian operator is kept in tact. To overcome such a well-known difficulty, several highly non-standard estimates have to be introduced, due to the nonlinear parabolic coefficients. The higher order asymptotic expansion, up to the third order temporal accuracy and fourth order spatial accuracy, has to be performed with a careful linearization technique. Such a higher order asymptotic expansion enables one to obtain a rough error estimate, so that to the $\ell^\infty$ bound for $n$ and $p$ could be derived. This $\ell^\infty$ estimate yields the upper and lower bounds of the two variables, and these bounds play a crucial role in the subsequent analysis. Finally, the refined error estimate is carried out to accomplish the desired convergence result. To our knowledge, it will be the first work to combine three theoretical properties for any numerical scheme for the PNP system: unique solvability/positivity-preserving, energy stability, and optimal rate convergence analysis. 
  
The rest of the article is organized as follows. In Section~\ref{sec:numerical scheme} we propose the fully discrete numerical scheme. The detailed proof for the positivity-preserving property of the numerical solution is provided in Section~\ref{sec:positivity}, and the energy stability analysis is established in Section~\ref{sec:energy stability}. The optimal rate convergence analysis is presented in Section~\ref{sec:convergence}.  Some numerical results are provided in Section~\ref{sec:numerical results}. Finally, the concluding remarks are given in Section~\ref{sec:conclusion}.

	\section{The fully discrete numerical scheme}
	\label{sec:numerical scheme}	

	\subsection{Nondimensionalization}
	\label{subsec:non-dim}
	
We introduce the dimensionless dependent variables $\hat{n} := n/n_0$, $\hat{p} := p/p_0$, with $c_0 = n_0 = p_0$, and $\hat{\phi} := \phi/\phi_0$, with
	\[
\phi_0 = \frac{k_B\theta_0}{z_0e_0}.
	\]
We use the dimensionless independent variables $\hat{x} := x/L$ and $\hat{t} := t/T$, with
	\[
L = \sqrt{\frac{\varepsilon k_B\theta_0}{(z_0e_0)^2 c_0}} \quad \mbox{and} \quad T = \frac{L^2}{D_n}.
	\]
Define $\hat{D} := D_p/D_n$. Then the dimensionless dynamical equations may be written (after dropping the hats on the parameters and variables) as
		\begin{eqnarray} 
\partial_t n &=&   \nabla\cdot\left( \nabla n - n \nabla \phi  \right)   ,   
	\label{equation-PNP-1-nd} 
	\\
\partial_t p &=&   D \nabla\cdot\left( \nabla p +  p \nabla \phi  \right)   ,   
	\label{equation-PNP-2-nd}  
	\\
 -  \Delta \phi &=&  p-n  .
 	\label{equation-PNP-3-nd} 
	\end{eqnarray}
This system dissipates the dimensionless energy
	\begin{equation}
E (n, p) = \int_\Omega\left\{   n \ln n + p \ln p +  \frac{1}{2} (n-p)(-\Delta)^{-1}(n-p)\right\}d{\bf x} ,
	\label{PNP energy-nd}
	\end{equation} 	
and may be viewed as the following conserved gradient flow:
	\begin{equation} 
\partial_t n  = \nabla \cdot \left( n \nabla \mu_n \right),  \quad  
\partial_t p = D\nabla \cdot \left(  p \nabla \mu_p \right) , 
 \  \label{equation-PNP-0-nd} 
	\end{equation}
where $\mu_n$ and $\mu_p$ are the dimensionless chemical potentials given by
	\begin{align} 
\mu_n & := \delta_n E = \ln n + 1 + (-\Delta)^{-1} ( n - p)   =  \ln  n + 1  -  \phi ,        
	\label{PNP-chem pot-n-nd} 
	\\
\mu_p &:= \delta_p E =   \ln p +1 + (-\Delta)^{-1} ( p - n)  =   \ln p +1 +  \phi ,       
	\label{PNP-chem pot-p-nd}
	\end{align} 
and $\phi$ is the periodic solution to 
	\[
 - \Delta \phi = p-n.
	\]
Consequently, the energy is dissipated at the rate
	\[
d_t E = -\int_\Omega\left\{n \, \left|\nabla\mu_n \right|^2+ D\, p \,  \left|\nabla\mu_p \right|^2 \right\} d{\bf x} \le 0.
	\]

	\subsection{The finite difference spatial discretization}
	\label{subsec:finite difference}

We use the notation and results for some discrete functions and operators from~\cite{guo16, wise10, wise09a}. Let $\Omega = (-L_x, L_x)\times(-L_y, L_y)\times(-L_z, L_z)$, where for simplicity, we assume $L_x =L_y=L_z =: L > 0$. Let $N\in\mathbb{N}$ be given, and define the grid spacing $h := \frac{2L}{N}$, i.e., a uniform spatial mesh size is taken for simplicity of presentation.  We define the following two uniform, infinite grids with grid spacing $h>0$: $E := \{ p_{i+\hf} \ |\ i\in {\mathbb{Z}}\}$, $C := \{ p_i \ |\ i\in {\mathbb{Z}}\}$, where $p_i = p(i) := (i-\hf)\cdot h$. Consider the following 3-D discrete $N^3$-periodic function spaces: 
	\begin{eqnarray*}
	\begin{aligned}
{\mathcal C}_{\rm per} &:= \left\{\nu: C\times C
\times C\rightarrow {\mathbb{R}}\ \middle| \ \nu_{i,j,k} = \nu_{i+\alpha N,j+\beta N, k+\gamma N}, \ \forall \, i,j,k,\alpha,\beta,\gamma\in \mathbb{Z} \right\},
	\\
{\mathcal E}^{\rm x}_{\rm per} &:=\left\{\nu: E\times C\times C\rightarrow {\mathbb{R}}\ \middle| \ \nu_{i+\frac12,j,k}= \nu_{i+\frac12+\alpha N,j+\beta N, k+\gamma N}, \ \forall \, i,j,k,\alpha,\beta,\gamma\in \mathbb{Z}\right\} , 
	\end{aligned}
	\end{eqnarray*}
in which identification $\nu_{i,j,k} = \nu(p_i,p_j,p_k)$ is taken. The spaces  ${\mathcal E}^{\rm y}_{\rm per}$ and ${\mathcal E}^{\rm z}_{\rm per}$ are analogously defined. The functions of ${\mathcal C}_{\rm per}$ are called {\emph{cell centered functions}}. The functions of ${\mathcal E}^{\rm x}_{\rm per}$, ${\mathcal E}^{\rm y}_{\rm per}$, and ${\mathcal E}^{\rm z}_{\rm per}$,  are called {\emph{east-west}},  {\emph{north-south}}, and {\emph{up-down face-centered functions}}, respectively.  We also define the mean zero space 
	\[
\mathring{\mathcal C}_{\rm per}:= \left\{\nu\in {\mathcal C}_{\rm per} \ \middle| \ 0 = \overline{\nu} :=  \frac{h^3}{| \Omega|} \sum_{i,j,k=1}^N \nu_{i,j,k} \right\},
	\] 
and denote $\vec{\mathcal{E}}_{\rm per} := {\mathcal E}^{\rm x}_{\rm per}\times {\mathcal E}^{\rm y}_{\rm per}\times {\mathcal E}^{\rm z}_{\rm per}$. In addition, we introduce the important difference and average operators on the spaces:  
	\begin{eqnarray*}
&& A_x \nu_{i+\hf,j,k} := \frac{1}{2}\left(\nu_{i+1,j,k} + \nu_{i,j,k} \right), \quad D_x \nu_{i+\hf,j,k} := \frac{1}{h}\left(\nu_{i+1,j,k} - \nu_{i,j,k} \right),
	\\
&& A_y \nu_{i,j+\hf,k} := \frac{1}{2}\left(\nu_{i,j+1,k} + \nu_{i,j,k} \right), \quad D_y \nu_{i,j+\hf,k} := \frac{1}{h}\left(\nu_{i,j+1,k} - \nu_{i,j,k} \right) , 
	\\
&& A_z \nu_{i,j,k+\hf} := \frac{1}{2}\left(\nu_{i,j,k+1} + \nu_{i,j,k} \right), \quad D_z \nu_{i,j,k+\hf} := \frac{1}{h}\left(\nu_{i,j,k+1} - \nu_{i,j,k} \right) , 
	\end{eqnarray*}
with $A_x,\, D_x: {\mathcal C}_{\rm per}\rightarrow{\mathcal E}_{\rm per}^{\rm x}$, $A_y,\, D_y: {\mathcal C}_{\rm per}\rightarrow{\mathcal E}_{\rm per}^{\rm y}$, $A_z,\, D_z: {\mathcal C}_{\rm per}\rightarrow{\mathcal E}_{\rm per}^{\rm z}$. 
Likewise,
	\begin{eqnarray*}
&& a_x \nu_{i, j, k} := \frac{1}{2}\left(\nu_{i+\hf, j, k} + \nu_{i-\hf, j, k} \right),	 \quad d_x \nu_{i, j, k} := \frac{1}{h}\left(\nu_{i+\hf, j, k} - \nu_{i-\hf, j, k} \right),
	\\
&& a_y \nu_{i,j, k} := \frac{1}{2}\left(\nu_{i,j+\hf, k} + \nu_{i,j-\hf, k} \right),	 \quad d_y \nu_{i,j, k} := \frac{1}{h}\left(\nu_{i,j+\hf, k} - \nu_{i,j-\hf, k} \right),
	\\
&& a_z \nu_{i,j,k} := \frac{1}{2}\left(\nu_{i, j,k+\hf} + \nu_{i, j, k-\hf} \right),	 \quad d_z \nu_{i,j, k} := \frac{1}{h}\left(\nu_{i, j,k+\hf} - \nu_{i, j,k-\hf} \right),
	\end{eqnarray*}
with $a_x,\, d_x : {\mathcal E}_{\rm per}^{\rm x}\rightarrow{\mathcal C}_{\rm per}$, $a_y,\, d_y : {\mathcal E}_{\rm per}^{\rm y}\rightarrow{\mathcal C}_{\rm per}$, and $a_z,\, d_z : {\mathcal E}_{\rm per}^{\rm z}\rightarrow{\mathcal C}_{\rm per}$. 
The discrete gradient $\nabh:{\mathcal C}_{\rm per}\rightarrow \vec{\mathcal{E}}_{\rm per}$ and the discrete divergence $\nabh\cdot :\vec{\mathcal{E}}_{\rm per} \rightarrow {\mathcal C}_{\rm per}$ are given by 
	\[
\nabh\nu_{i,j,k} =\left( D_x\nu_{i+\hf, j, k},  D_y\nu_{i, j+\hf, k},D_z\nu_{i, j, k+\hf}\right) , 
  \quad 
\nabh\cdot\vec{f}_{i,j,k} = d_x f^x_{i,j,k}	+ d_y f^y_{i,j,k} + d_z f^z_{i,j,k},
	\]
where $\vec{f} = (f^x,f^y,f^z)\in \vec{\mathcal{E}}_{\rm per}$. The standard 3-D discrete Laplacian, $\Delta_h : {\mathcal C}_{\rm per}\rightarrow{\mathcal C}_{\rm per}$, becomes  
	\begin{align*}
\Delta_h \nu_{i,j,k} := & \nabla_{h}\cdot\left(\nabla_{h}\nu\right)_{i,j,k} =  d_x(D_x \nu)_{i,j,k} + d_y(D_y \nu)_{i,j,k}+d_z(D_z \nu)_{i,j,k}
	\\
= & \ \frac{1}{h^2}\left( \nu_{i+1,j,k}+\nu_{i-1,j,k}+\nu_{i,j+1,k}+\nu_{i,j-1,k}+\nu_{i,j,k+1}+\nu_{i,j,k-1} - 6\nu_{i,j,k}\right).
	\end{align*}
More generally, if $\mathcal{D}$ is a periodic \emph{scalar} function that is defined at all of the face center points and $\vec{f}\in\vec{\mathcal{E}}_{\rm per}$, then $\mathcal{D}\vec{f}\in\vec{\mathcal{E}}_{\rm per}$, assuming point-wise multiplication, and we may define
	\[
\nabla_h\cdot \big(\mathcal{D} \vec{f} \big)_{i,j,k} = d_x\left(\mathcal{D}f^x\right)_{i,j,k}  + d_y\left(\mathcal{D}f^y\right)_{i,j,k} + d_z\left(\mathcal{D}f^z\right)_{i,j,k} .
	\]
Specifically, if $\nu\in \mathcal{C}_{\rm per}$, then $\nabla_h \cdot\left(\mathcal{D} \nabla_h  \cdot \right):\mathcal{C}_{\rm per} \rightarrow \mathcal{C}_{\rm per}$ is defined point-wise via
	\[
\nabla_h\cdot \big(\mathcal{D} \nabla_h \nu \big)_{i,j,k} = d_x\left(\mathcal{D}D_x\nu\right)_{i,j,k}  + d_y\left(\mathcal{D} D_y\nu\right)_{i,j,k} + d_z\left(\mathcal{D}D_z\nu\right)_{i,j,k} .
	\]

In addition, the following grid inner products are defined:   
	\begin{equation*}
	\begin{aligned}
\ciptwo{\nu}{\xi}  &:= h^3\sum_{i,j,k=1}^N  \nu_{i,j,k}\, \xi_{i,j,k},\quad \nu,\, \xi\in {\mathcal C}_{\rm per},\quad
& \eipx{\nu}{\xi} := \langle a_x(\nu\xi) , 1 \rangle ,\quad \nu,\, \xi\in{\mathcal E}^{\rm x}_{\rm per},
\\
\eipy{\nu}{\xi} &:= \langle a_y(\nu\xi) , 1 \rangle ,\quad \nu,\, \xi\in{\mathcal E}^{\rm y}_{\rm per},\quad
&\eipz{\nu}{\xi} := \langle a_z(\nu\xi) , 1 \rangle ,\quad \nu,\, \xi\in{\mathcal E}^{\rm z}_{\rm per}.
	\end{aligned}
	\end{equation*}	
	\[
[ \vec{f}_1 , \vec{f}_2 ] : = \eipx{f_1^x}{f_2^x}	+ \eipy{f_1^y}{f_2^y} + \eipz{f_1^z}{f_2^z}, \quad \vec{f}_i = (f_i^x,f_i^y,f_i^z) \in \vec{\mathcal{E}}_{\rm per}, \ i = 1,2.
	\]
Subsequently, we define the following norms for cell-centered functions. If $\nu\in {\mathcal C}_{\rm per}$, then $\nrm{\nu}_2^2 := \langle \nu , \nu \rangle$; $\nrm{\nu}_p^p := \ciptwo{|\nu|^p}{1}$, for $1\le p< \infty$, and $\nrm{\nu}_\infty := \max_{1\le i,j,k\le N}\left|\nu_{i,j,k}\right|$. The gradient norms are introduced as follows: 
	\begin{eqnarray*} 
\nrm{ \nabla_h \nu}_2^2 &: =& \eipvec{\nabh \nu }{ \nabh \nu } = \eipx{D_x\nu}{D_x\nu} + \eipy{D_y\nu}{D_y\nu} +\eipz{D_z\nu}{D_z\nu},  \quad \forall \,  \nu \in{\mathcal C}_{\rm per} ,   
	\\
\nrm{\nabla_h \nu}_p^p &:=&  \eipx{|D_x\nu|^p}{1} + \eipy{|D_y\nu|^p}{1} +\eipz{|D_z\nu|^p}{1}   , \quad \forall \, \nu \in{\mathcal C}_{\rm per}, \quad  1\le p<\infty . 
	\end{eqnarray*}
Higher order norms can be defined. For example,
	\[
\nrm{\nu}_{H_h^1}^2 : =  \nrm{\nu}_2^2+ \nrm{ \nabla_h \nu}_2^2, \quad \nrm{\nu}_{H_h^2}^2 : =  \nrm{\nu}_{H_h^1}^2  + \nrm{ \Delta_h \nu}_2^2  , \quad \forall \, \nu \in{\mathcal C}_{\rm per}.
	\]

	\begin{lem}[\cite{wang11a, wise09a}]
	\label{lemma1}    
Let $\mathcal{D}$ be an arbitrary periodic, scalar function defined on all of the face center points. For any $\psi, \nu \in {\mathcal C}_{\rm per}$ and any $\vec{f}\in\vec{\mathcal{E}}_{\rm per}$, the following summation-by-parts formulas are valid: 
	\begin{equation}
\langle \psi , \nabla_h\cdot\vec{f} \rangle = - [ \nabla_h \psi , \vec{f} ], \quad 
  \langle \psi, \nabla_h\cdot \left(\mathcal{D}\nabla_h\nu\right) \rangle 
  = - [ \nabla_h \psi , \mathcal{D}\nabla_h\nu ] .
	\label{lemma 1-0} 
	\end{equation}
	\end{lem}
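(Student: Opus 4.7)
The plan is to prove the two summation-by-parts identities by direct, component-wise expansion using the definitions of the discrete operators, with periodicity invoked as the only nontrivial ingredient (to eliminate boundary terms when shifting an index under the sum). The second identity will follow from the first by the substitution $\vec{f} = \mathcal{D}\nabla_h\nu$, so the entire argument reduces to establishing $\langle \psi, \nabla_h\cdot \vec{f}\rangle = -[\nabla_h\psi,\vec{f}]$.

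First, I would write out $\nabla_h\cdot\vec{f} = d_x f^x + d_y f^y + d_z f^z$ and use linearity of $\langle\cdot,\cdot\rangle$ to split the identity into three analogous one-directional claims; I will treat only the $x$-component, as the other two are identical modulo relabeling. That is, I need to show $\langle \psi, d_x f^x\rangle = -\eipx{D_x\psi}{f^x}$. Expanding the left-hand side gives
\begin{equation*}
\langle \psi, d_x f^x\rangle = h^3\sum_{i,j,k=1}^N \psi_{i,j,k}\,\frac{1}{h}\bigl(f^x_{i+\hf,j,k} - f^x_{i-\hf,j,k}\bigr).
\end{equation*}
I would then split the right-hand side into two sums, shift the index $i\mapsto i+1$ in the second sum, and invoke $N$-periodicity of $\psi$ and $f^x$ to conclude that the summation range is preserved. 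Collecting terms, the result becomes $-h^3\sum_{i,j,k} (D_x\psi)_{i+\hf,j,k}\, f^x_{i+\hf,j,k}$. A short separate computation shows that, again by periodicity of the integrand under $i\mapsto i+1$, the edge-inner product $\eipx{u}{v} = \langle a_x(uv),1\rangle$ collapses to exactly $h^3\sum_{i,j,k} u_{i+\hf,j,k}\,v_{i+\hf,j,k}$, so the two sides match.

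Adding the analogous $y$- and $z$-identities yields the first formula in \eqref{lemma 1-0}. For the second, I would simply note that $\mathcal{D}\nabla_h\nu\in\vec{\mathcal{E}}_{\rm per}$ since $\mathcal{D}$ is defined at face-center points and the spaces $\mathcal{E}^{\rm x,y,z}_{\rm per}$ are closed under point-wise multiplication by such an $\mathcal{D}$; applying the first identity to $\vec{f} := \mathcal{D}\nabla_h\nu$ delivers the claim. There is no real obstacle: the only thing to be careful about is the bookkeeping of half-integer indices and the use of periodicity to justify the index shift, which is exactly what replaces the boundary contributions one would see in a non-periodic setting.
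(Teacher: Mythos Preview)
Your proposal is correct. The paper does not supply its own proof of this lemma; it simply cites the result from \cite{wang11a, wise09a}, so there is nothing to compare against beyond noting that your direct index-shift argument using periodicity is exactly the standard route one finds in those references.
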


	\subsection{The numerical scheme} 
	
For simplicity, we denote  $( {\cal M}^m_n )_{i,j,k}=   n_{i,j,k}^m$, $( {\cal M}^m_p )_{i,j,k}=  D p^m_{i,j,k}$, and introduce the following mobility function at the face-centered mesh points: 
	\begin{align} 
( \breve{\cal M}_n^m )_{i+\hf,j,k} &:= A_x ({\cal M}_n^m )_{i+\hf, j,k} ,
	\nonumber
	\\
( \breve{\cal M}_n^m )_{i,j+\hf,k} &:= A_y ({\cal M}_n^m )_{i, j+\hf ,k} ,
	\label{mob ave-1} 
	\\
( \breve{\cal M}_n^m )_{i,j,k+\hf} &:= A_z ( {\cal M}_n^m )_{i, j,k+\hf} , 
	\nonumber
	\end{align} 
with similar definitions for $\breve{\cal M}_p^m$. We use the following semi-implicit scheme: given $n^m,p^m\in {\mathcal C}_{\rm per}$, find  $n^{m+1},p^{m+1}\in {\mathcal C}_{\rm per}$ such that
	\begin{align} 
\frac{n^{m+1} - n^m}{\dt} & = \nabla_h \cdot \left( \breve{\cal M}_n^m \nabla_h \mu_n^{m+1}  \right) , 
 	\label{scheme-PNP-1} 
	\\
\frac{p^{m+1} - p^m}{\dt} & = \nabla_h \cdot \left( \breve{\cal M}_p^m \nabla_h \mu_p^{m+1}  \right) , 
 	\label{scheme-PNP-2}	
	\\
\mu_n^{m+1} & =  \ln n^{m+1}+ (-\Delta_h)^{-1} ( n^{m+1} - p^{m+1} )  ,
	\label{scheme-PNP-chem pot-n} 
	\\
\mu_p^{m+1} & =  \ln p^{m+1} + (-\Delta_h)^{-1} ( p^{m+1} - n^{m+1} )  .   
	\label{scheme-PNP-chem pot-p}   
	\end{align}

	\section{Positivity-preserving and unique solvability analyses} 
	\label{sec:positivity} 

Recall the average operator: $\overline{f} = \frac{1}{| \Omega|} \langle f , {\bf 1} \rangle$. It is obvious that the numerical scheme~\eqref{scheme-PNP-1} -- \eqref{scheme-PNP-chem pot-p} is mass conservative, so that 
	\[
\overline{n^m} = \overline{n^0} := \beta_0, \quad \overline{p^m} = \overline{p^0} := \beta_0, \quad \mbox{with} \quad 0 < \beta_0, \quad  \forall \, m \ge 1.
	\]
The following preliminary estimates, which are proved in the recent paper~\cite{chen19b}, are recalled.  For any $\varphi  \in\mathring{\mathcal C}_{\rm per}$, there exists a unique $\psi\in\mathring{\mathcal C}_{\rm per}$ that solves
	\begin{equation}
\mathcal{L}_{\breve{\cal M} } (\psi):= - \nabla_h \cdot ( \check{\cal M} \nabla_h \psi ) 
 = \varphi .
	\label{PNP-mobility-0} 
	\end{equation}
The following discrete norm may be defined: 
	\begin{equation} 
\| \varphi  \|_{\mathcal{L}_{\breve{\cal M} }^{-1} } = \sqrt{ \langle \varphi ,  
 \mathcal{L}_{\check{\cal M} }^{-1} (\varphi) \rangle } .   
	\end{equation} 
If $\check{\cal M}\equiv 1$, we  have $\mathcal{L}_{\breve{\cal M} } (\psi) = -\Delta_h\psi$ and define
	\begin{equation} 
\| \varphi  \|_{-1,h} = \sqrt{ \langle \varphi ,  
 (-\Delta_h )^{-1} (\varphi) \rangle } .   
	\end{equation}

	\begin{lem}[\cite{chen19b}]
	\label{PNP-positivity-Lem-0}
Suppose that $\varphi^\star$, $\hat{\varphi} \in \mathcal{C}_{\rm per}$, with $\hat{\varphi} - \varphi^\star\in\mathring{\mathcal{C}}_{\rm per}$. Assume that $0 < \hat{\varphi}_{i,j,k} , \varphi^\star_{i,j,k} \le M_h$, for all $1 \le i,j,k \le N$, where $M_h >0$ may depend on $h$. The following estimate is valid: 
	\begin{equation} 
\|  ( - \Delta_h)^{-1} ( \hat{\varphi} - \varphi^\star ) \|_\infty \le \tilde{C}_1 M_h ,
  	\label{PNP-Lem-0} 
	\end{equation} 
where $\tilde{C}_1>0$ only depends on $\Omega$. 
	\end{lem}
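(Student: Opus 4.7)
Setting $f := \hat\varphi - \varphi^\star \in \mathring{\mathcal C}_{\rm per}$, the hypothesis $0 < \hat\varphi, \varphi^\star \le M_h$ yields the pointwise bound $\|f\|_\infty \le M_h$, hence on the bounded domain $\Omega$ the $\ell^2$ bound $\|f\|_2 \le |\Omega|^{1/2} M_h$. Let $\psi := (-\Delta_h)^{-1} f \in \mathring{\mathcal C}_{\rm per}$. The plan is to prove $\|\psi\|_\infty \le \tilde C_1 M_h$ by combining a discrete $H_h^2$ elliptic regularity estimate with a discrete Sobolev embedding $H_h^2 \hookrightarrow L^\infty$ valid in three space dimensions.

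First I would estimate $\|\psi\|_{H_h^2}$ in terms of $M_h$. By the definition of $\psi$, one has $\|\Delta_h \psi\|_2 = \|f\|_2 \le |\Omega|^{1/2} M_h$. Testing $-\Delta_h \psi = f$ against $\psi$ and applying the summation-by-parts identity of Lemma~\ref{lemma1} gives $\|\nabla_h \psi\|_2^2 = \langle f, \psi \rangle \le \|f\|_2 \|\psi\|_2$. Since $\psi$ has mean zero, a standard discrete Poincar\'e inequality $\|\psi\|_2 \le C_P \|\nabla_h \psi\|_2$, with $C_P$ depending only on $\Omega$, then produces $\|\nabla_h \psi\|_2 \le C_P \|f\|_2$ and $\|\psi\|_2 \le C_P^2 \|f\|_2$. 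Combining the three bounds, $\|\psi\|_{H_h^2}^2 = \|\psi\|_2^2 + \|\nabla_h \psi\|_2^2 + \|\Delta_h \psi\|_2^2 \le C \|f\|_2^2 \le C |\Omega| M_h^2$.

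Next I would invoke the discrete Sobolev embedding $\|\psi\|_\infty \le C_S \|\psi\|_{H_h^2}$ for mean-zero periodic grid functions in 3D, with $C_S$ independent of $h$. The cleanest way to establish this is through discrete Fourier analysis on the periodic grid: expanding $\psi$ in the finite Fourier basis with coefficients $\hat\psi_k$ (for admissible wavevectors $k\ne 0$), Cauchy--Schwarz gives
\[
\|\psi\|_\infty \le \sum_{k\ne 0} |\hat\psi_k| \le \Bigl( \sum_{k\ne 0} (1+|k|^4)^{-1} \Bigr)^{1/2} \Bigl( \sum_{k\ne 0} (1+|k|^4)|\hat\psi_k|^2 \Bigr)^{1/2} ,
\]
where the first factor is finite because $\sum 1/|k|^4$ converges in three dimensions, and the second factor is controlled by $\|\psi\|_{H_h^2}$ via Parseval and a symbol comparison for the discrete Laplacian. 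Chaining the inequalities yields $\|\psi\|_\infty \le \tilde C_1 M_h$ with $\tilde C_1$ depending only on $\Omega$.

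The main obstacle is justifying the last step: the discrete Sobolev embedding with an $h$-independent constant. The subtlety is that the discrete Laplacian symbol $\lambda_k = \tfrac{4}{h^2}\sum_{\ell=1}^3 \sin^2(k_\ell h/2)$ is not exactly $|k|^2$, but on the admissible range $|k_\ell h|\le \pi$ one has the uniform comparison $c|k|^2 \le \lambda_k \le |k|^2$ with $c>0$ independent of $h$. This is precisely what is needed to control $\sum_{k\ne 0}|k|^4|\hat\psi_k|^2$ by $\|\Delta_h\psi\|_2^2$ and hence to transfer the continuous 3D embedding $H^2\hookrightarrow L^\infty$ to the discrete setting with the constant depending only on $L$. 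Once this harmonic-analytic ingredient is secured, the estimate assembled above gives the desired bound.
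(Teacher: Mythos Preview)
The paper does not supply its own proof of this lemma; it is quoted verbatim from \cite{chen19b} and only invoked as a black box in the positivity argument. So there is no in-paper argument to compare against, and the question reduces to whether your route stands on its own.

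It does. Your chain $\|f\|_\infty \le M_h \Rightarrow \|f\|_2 \le |\Omega|^{1/2}M_h \Rightarrow \|\psi\|_{H_h^2} \le C\|f\|_2 \Rightarrow \|\psi\|_\infty \le C_S\|\psi\|_{H_h^2}$ is the natural one, and each link is sound. The discrete Poincar\'e inequality for mean-zero periodic grid functions is standard, and your energy identity plus $\|\Delta_h\psi\|_2=\|f\|_2$ gives the full $H_h^2$ control. For the embedding step, the Fourier argument you sketch is correct: on the admissible lattice one has $\lambda_k \ge c|k|^2$ uniformly in $h$ (from $\sin^2\theta \ge (2\theta/\pi)^2$ on $|\theta|\le \pi/2$), the tail sum $\sum_{k\ne 0}|k|^{-4}$ converges in three dimensions, and Parseval ties $\sum_k \lambda_k^2|\hat\psi_k|^2$ to $\|\Delta_h\psi\|_2^2$. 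The resulting constant depends only on $|\Omega|$ (equivalently $L$), as required. One cosmetic point: in your Cauchy--Schwarz split it is cleaner to weight by $(1+\lambda_k^2)$ rather than $(1+|k|^4)$, since that pairs directly with $\|\psi\|_2^2+\|\Delta_h\psi\|_2^2$ without an extra symbol comparison; but with the two-sided bound $c|k|^2\le\lambda_k\le |k|^2$ already in hand, either weighting works.
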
 

	\begin{lem}[\cite{chen19b}]
	\label{PNP-mobility-positivity-Lem-0}  
Suppose that $\varphi_1$, $\varphi_2 \in \mathcal{C}_{\rm per}$, with  $\varphi_1 - \varphi_2\in \mathring{\mathcal{C}}_{\rm per}$. Assume that $\| \varphi_1 \|_\infty , \| \varphi_2\|_\infty \le M_h$, and $\breve{\cal M} \ge {\cal M}_0$ at a point-wise level, for some constant ${\cal M}_0>0$ that is independent of $h$. Then we have the following estimate: 
	\begin{equation} 
\nrm{ \mathcal{L}_{\breve{\cal M} }^{-1} (\varphi_1 - \varphi_2)}_\infty \le C_2 := \tilde{C}_2 \mathcal{M}_0^{-1} h^{-1/2} ,  
	\label{PNP-mobility-Lem-0} 
	\end{equation} 
where $\tilde{C}_2>0$ depends only upon $M_h$ and $\Omega$.
	\end{lem}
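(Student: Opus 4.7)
\textbf{Proof plan for Lemma \ref{PNP-mobility-positivity-Lem-0}.}

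The plan is to set $\psi := \mathcal{L}_{\breve{\cal M}}^{-1}(\varphi_1-\varphi_2) \in \mathring{\mathcal C}_{\rm per}$, which is well defined because $\varphi_1-\varphi_2\in\mathring{\mathcal C}_{\rm per}$ and the operator $\mathcal{L}_{\breve{\cal M}}$ is invertible on mean-zero functions (this uses $\breve{\cal M}\ge {\cal M}_0>0$). Then I would establish the $\ell^\infty$ bound in three stages: (i) an $H^1_h$ energy estimate on $\psi$; (ii) a discrete Sobolev embedding into $L^6_h$; and (iii) an inverse inequality that produces the $h^{-1/2}$ factor.

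For stage (i), I would pair the defining identity $-\nabla_h\cdot(\breve{\cal M}\nabla_h\psi)=\varphi_1-\varphi_2$ with $\psi$ and apply summation by parts from Lemma~\ref{lemma1}:
\[
[\nabla_h \psi,\breve{\cal M}\nabla_h\psi] = \langle \psi,\varphi_1-\varphi_2\rangle \le \|\psi\|_2\,\|\varphi_1-\varphi_2\|_2 \le 2M_h|\Omega|^{1/2}\,\|\psi\|_2 .
\]
Since $\breve{\cal M}\ge \mathcal{M}_0$ at every face-centered point, the left-hand side bounds $\mathcal{M}_0\|\nabla_h\psi\|_2^2$ from below. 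Using the discrete Poincar\'e inequality on $\mathring{\mathcal C}_{\rm per}$ to replace $\|\psi\|_2$ by a constant times $\|\nabla_h\psi\|_2$ and dividing, I obtain
\[
\|\nabla_h\psi\|_2 \le C(\Omega)\, M_h\, \mathcal{M}_0^{-1}.
\]

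For stages (ii) and (iii), the key ingredient is the discrete Sobolev embedding $\|\psi\|_6 \le C(\Omega)\|\nabla_h\psi\|_2$ for mean-zero, $N^3$-periodic cell-centered functions in three dimensions (a standard discrete analogue of $H^1\hookrightarrow L^6$ in $\mathbb{R}^3$, as used in the authors' prior works on the Cahn--Hilliard equation), followed by the elementary inverse inequality $\|\psi\|_\infty \le h^{-1/2}\|\psi\|_6$, which is tight because a grid function supported on a single cell $\{(i_0,j_0,k_0)\}$ with value $a$ satisfies $\|\psi\|_6 = a\,h^{1/2}$ and $\|\psi\|_\infty = a$. Chaining these with the estimate from stage (i) yields
\[
\|\psi\|_\infty \;\le\; h^{-1/2}\|\psi\|_6 \;\le\; C(\Omega)\, h^{-1/2}\|\nabla_h\psi\|_2 \;\le\; \tilde C_2\, \mathcal{M}_0^{-1}\, h^{-1/2},
\]
where $\tilde C_2$ depends only on $M_h$ and $\Omega$, which is exactly \eqref{PNP-mobility-Lem-0}.

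The main obstacle is the discrete Sobolev embedding $\|\psi\|_6\le C\|\nabla_h\psi\|_2$ with a constant independent of $h$; this is the step that makes the exponent $-1/2$ (rather than the naive $-3/2$ coming from the brute-force $\|\psi\|_\infty\le h^{-3/2}\|\psi\|_2$) possible. Everything else is straightforward discrete calculus: summation by parts, point-wise positivity of $\breve{\cal M}$, discrete Poincar\'e, and a trivial inverse inequality. I would cite the $L^6$ embedding from the companion references where it is proved via a discrete Gagliardo--Nirenberg argument, rather than reprove it here.
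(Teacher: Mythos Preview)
The paper does not give its own proof of this lemma; it is quoted verbatim from the companion reference~\cite{chen19b}, with only the statement reproduced. So there is nothing in the present paper to compare against directly.

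That said, your proposal is correct and is almost certainly the argument used in~\cite{chen19b}. The three stages you outline --- the energy identity plus discrete Poincar\'e to get $\|\nabla_h\psi\|_2\le C(\Omega)M_h\mathcal{M}_0^{-1}$, the discrete Sobolev embedding $H^1_h\hookrightarrow L^6_h$ in three dimensions, and the sharp inverse inequality $\|\psi\|_\infty\le h^{-1/2}\|\psi\|_6$ --- are exactly what the exponent $-1/2$ in~\eqref{PNP-mobility-Lem-0} signals: in $d=3$, $L^6$ is the endpoint of the $H^1$ embedding, and $h^{-d/2+d/6}=h^{-1/2}$ is the corresponding inverse-inequality loss. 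There is no more elementary route to that exponent, and no reason to expect a different decomposition in the cited source. Your identification of the discrete $L^6$ embedding as the only nontrivial ingredient is accurate; the rest is routine discrete calculus already available in the paper (Lemma~\ref{lemma1}, the Poincar\'e inequality implicit in the $\|\cdot\|_{-1,h}$ setup).
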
 

The positivity-preserving and unique solvability properties are established in the following theorem.

	\begin{thm}  
	\label{PNP-positivity} 
Given $n^m,p^m\in  {\mathcal{C}}_{\rm per}$, with $0 <  n^m_{i,j,k}, p^m_{i,j,k}$, $1 \le i, j, k \le N$,  and $n^m-p^m\in\mathring{\mathcal{C}}_{\rm per}$, there exists a unique solution $(n^{m+1},p^{m+1})\in \left[{\mathcal{C}}_{\rm per}\right]^2$ to the numerical scheme~\eqref{scheme-PNP-1} -- \eqref{scheme-PNP-chem pot-p}, with $0 < n^{m+1}_{i,j,k}, p^{m+1}_{i,j,k}$, $1 \le i, j, k \le N$ and $n^{m+1}-p^{m+1}\in\mathring{\mathcal{C}}_{\rm per}$.    
\end{thm}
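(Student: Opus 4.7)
The approach is to recast the one-step update $(n^m,p^m)\mapsto(n^{m+1},p^{m+1})$ as the Euler--Lagrange system of a strictly convex minimization problem over a compact convex admissible set, and then to exploit the singularity of the logarithm at $0$, combined with Lemmas~\ref{PNP-positivity-Lem-0}--\ref{PNP-mobility-positivity-Lem-0}, to force the minimizer away from the boundary of positivity.

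Concretely, I would introduce the discrete free-energy functional
\[
\mathcal{J}^m(n,p) := \langle n\ln n,1\rangle + \langle p\ln p,1\rangle + \tfrac12\|n-p\|_{-1,h}^2 + \tfrac{1}{2\dt}\|n-n^m\|_{\mathcal{L}_{\breve{\cal M}_n^m}^{-1}}^2 + \tfrac{1}{2\dt}\|p-p^m\|_{\mathcal{L}_{\breve{\cal M}_p^m}^{-1}}^2
\]
on the admissible set
\[
A_h := \big\{(n,p)\in[\mathcal{C}_{\rm per}]^2 : \overline{n}=\overline{p}=\beta_0,\ n\ge 0,\ p\ge 0\big\}.
\]
The mass constraints enforce that the arguments of the two $\mathcal{L}^{-1}$ norms are mean zero (so the norms are defined) and, combined with nonnegativity, they force the pointwise bound $0\le n_{i,j,k},p_{i,j,k}\le N^3\beta_0=:M_h$, so $A_h$ is compact. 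The hypothesis $n^m,p^m>0$ gives $\breve{\cal M}_n^m\ge\mathcal{M}_{0,n}:=\min_{i,j,k}n^m_{i,j,k}>0$ and an analogous lower bound for $\breve{\cal M}_p^m$, so both $\mathcal{L}^{-1}$ operators are well defined on $\mathring{\mathcal{C}}_{\rm per}$. Introducing Lagrange multipliers for the two mean constraints and applying $-\mathcal{L}_{\breve{\cal M}_n^m}$ and $-\mathcal{L}_{\breve{\cal M}_p^m}$, respectively, to the resulting stationarity equations of $\mathcal{J}^m$ eliminates the multipliers and reproduces exactly \eqref{scheme-PNP-1}--\eqref{scheme-PNP-chem pot-p}. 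Thus solving the scheme is equivalent to finding an interior critical point of $\mathcal{J}^m$ on $A_h$.

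Since $x\mapsto x\ln x$ is continuous on $[0,\infty)$ (with $0\ln 0 := 0$) and strictly convex on $(0,\infty)$, and the remaining terms are convex quadratic forms---with each $\mathcal{L}^{-1}$ piece strictly convex on mean-zero increments thanks to $\breve{\cal M}_n^m,\breve{\cal M}_p^m>0$---the functional $\mathcal{J}^m$ is continuous and strictly convex on the compact convex set $A_h$. Therefore it attains its minimum at a unique point $(n^\star,p^\star)\in A_h$. The remaining and decisive task is to show that this minimizer lies in the interior of $A_h$, i.e., $n^\star,p^\star>0$ pointwise.

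This strict positivity is the main obstacle, and I would handle it by contradiction. Suppose $n^\star_{i_0,j_0,k_0}=0$ at some index. Since $\overline{n^\star}=\beta_0>0$, the set $I_+:=\{(i,j,k):n^\star_{i,j,k}>0\}$ is nonempty, and the mean-zero perturbation $\delta n$ defined by $\delta n_{i_0,j_0,k_0}=1$, $\delta n_{i,j,k}=-1/|I_+|$ on $I_+$, and $0$ elsewhere keeps $n_\epsilon:=n^\star+\epsilon\,\delta n$ in $A_h$ for all sufficiently small $\epsilon>0$. Expanding $\mathcal{J}^m(n_\epsilon,p^\star)-\mathcal{J}^m(n^\star,p^\star)$ to leading order in $\epsilon$, the logarithmic contribution at $(i_0,j_0,k_0)$ is exactly $h^3\epsilon\ln\epsilon$, while every remaining contribution is at most $O(\epsilon)$. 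Here Lemma~\ref{PNP-positivity-Lem-0} bounds $\|(-\Delta_h)^{-1}(n^\star-p^\star)\|_\infty\le\tilde C_1 M_h$, controlling the variation of $\|\cdot\|_{-1,h}^2$, while Lemma~\ref{PNP-mobility-positivity-Lem-0} bounds $\|\mathcal{L}_{\breve{\cal M}_n^m}^{-1}(n^\star-n^m)\|_\infty\le C_2$, controlling the time-stepping variation; both constants are finite and depend only on $h$, $M_h$, and $\mathcal{M}_{0,n}$. Since $\epsilon\ln\epsilon$ dominates any linear $O(\epsilon)$ contribution as $\epsilon\to 0^+$, the perturbation strictly decreases $\mathcal{J}^m$, contradicting minimality of $(n^\star,p^\star)$. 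The symmetric argument in the $p$-variable gives $p^\star>0$; the stationarity equations of $\mathcal{J}^m$ at this interior minimum then coincide with the scheme, establishing that $(n^{m+1},p^{m+1})=(n^\star,p^\star)$ exists, is unique, and is strictly positive.
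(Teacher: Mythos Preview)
Your proposal is correct and follows essentially the same variational strategy as the paper: both recast the scheme as minimization of the same strictly convex functional over the mass-constrained admissible set, invoke Lemmas~\ref{PNP-positivity-Lem-0}--\ref{PNP-mobility-positivity-Lem-0} to control the nonsingular terms, and exploit the logarithmic singularity at $0$ to rule out boundary minimizers. The only cosmetic differences are that the paper works on a $\delta$-truncated domain and computes a directional derivative along a two-point perturbation (value $+1$ at the minimum of $n^\star$, value $-1$ at its maximum), whereas you extend continuously to $\{n\ge 0\}$ and use an $\epsilon\ln\epsilon$ expansion along a perturbation distributed over all positive points; both devices achieve the same contradiction.
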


	\begin{proof} 
Suppose, as before, that $\overline{n^m} = \overline{p^m} =\beta_0 >0$. Define $\nu^m := n^m-\beta_0$ and $\rho^m := p^m-\beta_0$. The numerical solution of~\eqref{scheme-PNP-1} -- \eqref{scheme-PNP-chem pot-p} is equivalent to the minimization of the following discrete energy functional: 
	\begin{align} 
J^m_h (\nu, \rho) &= \frac{1}{2 \dt} \left( \| \nu - \nu^m \|_{\mathcal{L}_{\breve{\cal M}_n^m }^{-1} }^2 + \| \rho - \rho^m \|_{\mathcal{L}_{\breve{\cal M}_p^m }^{-1} }^2  \right) 
	\nonumber
	\\
& \quad +\langle (\nu+\beta_0) \ln (\nu+\beta_0) + (\rho+\beta_0) \ln (\rho+\beta_0)  , {\bf 1} \rangle  + \frac{1}{2} \| \nu - \rho \|_{-1,h}^2  ,
	\label{PNP-positive-1} 
	\end{align} 
over the admissible set
	\begin{equation}
\mathring{A}_h :=  \left\{ (\nu, \rho) \in \left[\mathring{\mathcal C}_{\rm per}\right]^2 \ \middle| \ 0 < \nu_{i,j,k}+\beta_0 , \, \rho_{i,j,k}+\beta_0 < M_h, \ 1\le i, j, k\le N  \right\} ,
	\end{equation} 
where $M_h := \nicefrac{( \beta_0 | \Omega | ) }{h^3}$. 
We observe that $J^m_h (n, p)$ is a strictly convex function over this domain.  Next, we prove that there exists a minimizer of $J^m_h (n, p)$ over the domain $\mathring{A}_h$.

Consider the following closed domain: for $\delta >0$, 
	\begin{equation}
\mathring{A}_{h,\delta} :=  \left\{ (\nu, \rho) \in \left[\mathring{\mathcal C}_{\rm per}\right]^2 \ \middle| \ \delta \le \nu_{i,j,k}+\beta_0 , \, \rho_{i,j,k}+\beta_0 \le M_h-\delta, \ 1\le i, j, k\le N  \right\}  .
	\end{equation} 
Since $\mathring{A}_{h,\delta}$ is a compact set in the hyperplane $H:=\left\{ (\nu, \rho )\middle| \overline{\nu} = \overline{\rho} = 0 \right\}$, there exists a (not necessarily unique) minimizer of $J^m_h (\nu, \rho)$ over $\mathring{A}_{h,\delta}$. The key point of the positivity analysis is that, such a minimizer could not occur at one of the boundary points (in $H$) if $\delta$ is sufficiently small. 

Let us suppose that the minimizer of $J^m_h (\nu,\rho)$ occurs at a boundary point of $\mathring{A}_{h,\delta}$. Without loss of generality, we assume the minimizer is $( \nu^\star_{i,j,k} , \rho^\star_{i,j,k} )$, with $\nu^\star_{{i_0},{j_0},{k_0}}+\beta_0=\delta$, at some grid point $({i_0}, {j_0}, {k_0})$. Suppose that $\nu^\star$ attains its maximum value at the point $({i_1}, {j_1}, {k_1})$. By the fact that $\overline{\nu^\star} = 0$, it is obvious that $\nu^\star_{{i_1}, {j_1}, {k_1}} \ge 0$. 

Consider the following directional derivative: for any $\psi\in\mathring{\mathcal C}_{\rm per}$, 
	\begin{align*}
d_s \! \left.J^m_h(\nu^\star +s\psi ,\rho^\star)\right|_{s=0} & = \frac{1}{\Delta t} \ciptwo{\mathcal{L}_{\breve{\cal M}_n^m }^{-1}\left(\nu^\star-\nu^m\right)}{\psi} + \ciptwo{\ln\left(\nu^\star+\beta_0\right)+1}{\psi}
	\\
& \quad + \ciptwo{(-\Delta_h)^{-1}\left(\nu^\star -\rho^\star \right)}{\psi}.
	\end{align*}
Let us pick the direction $\psi \in \mathring{\mathcal{C}}_{\rm per}$, such that  
	\[
\psi_{i,j,k} = \delta_{i,i_0}\delta_{j,j_0}\delta_{k,k_0} - \delta_{i,i_1}\delta_{j,j_1}\delta_{k,k_1} ,
	\] 
where $\delta_{k,\ell}$ is the Kronecker delta function. Then,
	\begin{align}
\frac{1}{h^3}d_s \! \left.J^m_h(\nu^\star +s\psi ,\rho^\star)\right|_{s=0} & =   \ln \left(\frac{\nu^\star_{{i_0},{j_0},{k_0}}+\beta_0}{\nu^\star_{{i_1},{j_1},{k_1}} +\beta_0}\right)   
	\nonumber 
	\\
& \quad+ (-\Delta_h)^{-1} ( \nu^\star - \rho^\star )_{{i_0},{j_0},{k_0}} - (-\Delta_h)^{-1} ( \nu^\star - \rho^\star )_{{i_1},{j_1},{k_1}} 
	\nonumber 
	\\
& \quad + \frac{1}{\dt} \left( \mathcal{L}_{\breve{\cal M}_n^m }^{-1}  ( \nu^\star - \nu^m )_{{i_0},{j_0},{k_0}} - \mathcal{L}_{\breve{\cal M}_n^m }^{-1}  ( \nu^\star - \nu^m )_{{i_1},{j_1},{k_1}} \right)  .
	\label{PNP-positive-4} 
	\end{align}
Because
	\[
n^\star_{{i_0}, {j_0}, {k_0}} = \nu^\star_{{i_0}, {j_0}, {k_0}}+\beta_0 = \delta \quad \mbox{and} \quad n^\star_{{i_1},{j_1},{k_1}} = \nu^\star_{{i_1},{j_1},{k_1}} +\beta_0 \ge \beta_0,
	\]
we have 
	\begin{equation} 
\ln \left(\frac{\nu^\star_{{i_0},{j_0},{k_0}}+\beta_0}{\nu^\star_{{i_1},{j_1},{k_1}} +\beta_0}\right)   \le \ln \frac{\delta}{\beta_0}.
	\label{PNP-positive-5} 
	\end{equation} 
For the third and fourth terms appearing in~\eqref{PNP-positive-4}, we apply Lemma~\ref{PNP-positivity-Lem-0} and obtain 
	\begin{equation} 
- 2 \tilde{C}_1 M_h \le (-\Delta_h)^{-1} ( \nu^\star -\rho^\star )_{{i_0},{j_0},{k_0}} - (-\Delta_h)^{-1} ( \nu^\star - \rho^\star )_{{i_1},{j_1},{k_1}} \le  2 \tilde{C}_1 M_h .
	\label{PNP-positive-6} 
	\end{equation}
Similarly, for the last two terms appearing in~\eqref{PNP-positive-4}, an application of Lemma~\ref{PNP-mobility-positivity-Lem-0} indicates that 
	\begin{equation} 
- 2 \tilde{C}_2 \mathcal{M}_0^{-1} h^{-1/2} \le \mathcal{L}_{\breve{\cal M}_n^m }^{-1}  ( \nu^\star - \nu^m )_{{i_0},{j_0},{k_0}} - \mathcal{L}_{\breve{\cal M}_n^m }^{-1}  ( \nu^\star - \nu^m )_{{i_1},{j_1},{k_1}} \le  2 \tilde{C}_2 \mathcal{M}_0^{-1} h^{-1/2} .  
	\label{PNP-positive-7} 
	\end{equation}
Consequently,  a substitution of~\eqref{PNP-positive-5} -- \eqref{PNP-positive-7} into~\eqref{PNP-positive-4} yields 
	\begin{equation} 
\frac{1}{h^3}d_s \! \left.J^m_h(\nu^\star +s\psi ,\rho^\star)\right|_{s=0} \le \ln \frac{\delta}{\beta_0} +  2\tilde{C}_1 M_h + 2 \tilde{C}_2 \mathcal{M}_0^{-1} \dt^{-1} h^{-1/2}.  
	\label{PNP-positive-8} 
	\end{equation}  

Define 
	\[
D_0 := 2\tilde{C}_1 M_h + 2 \tilde{C}_2 \mathcal{M}_0^{-1}\dt^{-1} h^{-1/2},
	\]
and note that $D_0$ is a constant for fixed $\dt$ and $h$, though it is singular, as $\dt, h\to 0$. For any fixed $\dt$ and $h$, we may choose $\delta>0$ small enough so that 
	\begin{equation} 
\ln \frac{\delta}{\beta_0}  + D_0 < 0 .  
	\label{PNP-positive-9} 
	\end{equation} 
This in turn gaurantees that 
	\begin{equation} 
d_s \! \left.J^m_h(\nu^\star +s\psi ,\rho^\star)\right|_{s=0}  < 0 .  
	\label{PNP-positive-10} 
	\end{equation} 
This contradicts the assumption that $J^m_h$ has a minimum at $(\nu^\star ,\rho^\star)$, since the directional derivative is negative in a direction pointing into the interior of $\mathring{A}_{h,\delta}$.

Using similar arguments, we can also prove that, the global minimum of $J^m_h (\nu,\rho)$ over $\mathring{A}_{h,\delta}$ could not possibly occur at a boundary point satisfying $\rho^\star_{{i_0},{j_0},{k_0}} +\beta_0 =\delta$, if $\delta$ is small enough. The details are left to interested readers. 
  
Therefore, the global minimum of $J^m_h (\nu,\rho)$ over $\mathring{A}_{h,\delta}$ could only possibly occur at an interior point, for $\delta>0$ sufficiently small. Since $J^m_h  (\nu,\rho)$ is a smooth function, we conclude that there must be a solution $(\nu_{i,j,k}, \rho_{i,j,k}) \in \mathring{A}_h$, so that 
	\begin{equation} 
d_s \! \left.J^m_h(\nu +s\psi ,\rho +s\phi)\right|_{s=0} =0 ,  \quad \forall \, (\psi,\phi) \in \left[\mathring{\mathcal C}_{\rm per}\right]^2,
	\label{PNP-positive-13} 
	\end{equation} 
which is equivalent to the numerical solution of~\eqref{scheme-PNP-1} -- \eqref{scheme-PNP-chem pot-p}. The existence of a positive numerical solution is established. 

Finally, since $J^m_h (\nu, \rho)$ is a strictly convex function over $\mathring{A}_h$, the uniqueness analysis for this numerical solution is straightforward. The proof of Theorem~\ref{PNP-positivity} is complete. 
	\end{proof}

	\section{Energy stability analysis}
	\label{sec:energy stability}

With the positivity-preserving and unique solvability properties for the numerical scheme~\eqref{scheme-PNP-1} -- \eqref{scheme-PNP-chem pot-p} established, we now prove energy stability. We introduce the following discrete energy: 
	\begin{eqnarray} 
E_h (n, p) := \langle n \ln n + p \ln p , {\bf 1} \rangle + \frac{1}{2} \| n - p \|_{-1,h}^2 . 
	\label{PNP-discrete energy}
	\end{eqnarray}

	\begin{thm}
	\label{PNP-energy stability} 
For the numerical solution~\eqref{scheme-PNP-1} -- \eqref{scheme-PNP-chem pot-p}, we have 
	\begin{equation} 
E_h (n^{m+1}, p^{m+1}) + \dt \left(\eipvec{\check{\mathcal{M}}_n^{m} \nabla_h \mu_n^{m+1}}{\nabla_h \mu_n^{m+1}} + \eipvec{\check{\mathcal{M}}_p^{m} \nabla_h \mu_p^{m+1}}{\nabla_h \mu_p^{m+1} } \right)  
   \le E_h (n^{m}, p^{m}),   \label{PNP-energy-0} 
	\end{equation} 
so that $E_h (n^m, p^m) \le E_h (n^0, p^0) \le C_0$, for all $m\in \mathbb{N}$, where $C_0>0$ is a constant independent of $h$. 
	\end{thm}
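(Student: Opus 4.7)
The plan is to exploit the fact that every part of the discrete energy $E_h$ is convex on the positive cone (which is accessible thanks to Theorem~\ref{PNP-positivity}), so that the fully implicit chemical potentials $\mu_n^{m+1}, \mu_p^{m+1}$ act as supporting hyperplanes for $E_h$ at $(n^{m+1}, p^{m+1})$. Concretely, I will first establish the one-step convexity estimate
\begin{equation*}
E_h(n^{m+1}, p^{m+1}) - E_h(n^m, p^m) \le \ciptwo{\mu_n^{m+1}}{n^{m+1} - n^m} + \ciptwo{\mu_p^{m+1}}{p^{m+1} - p^m},
\end{equation*}
and then substitute the update equations~\eqref{scheme-PNP-1}--\eqref{scheme-PNP-2} and invoke Lemma~\ref{lemma1} to rewrite the right-hand side as the negative of the dissipation expression in~\eqref{PNP-energy-0}.

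For the convexity estimate I would split $E_h$ into its entropic and electrostatic parts. Because $n^{m+1}, n^m > 0$ point-wise by Theorem~\ref{PNP-positivity} and $s \mapsto s\ln s$ is strictly convex on $(0,\infty)$, the scalar tangent inequality $s_1 \ln s_1 - s_0 \ln s_0 \le (\ln s_1 + 1)(s_1 - s_0)$ holds at every grid point and sums to $\ciptwo{\ln n^{m+1} + 1}{n^{m+1} - n^m}$, with the analogous bound for $p$. For the electrostatic part, self-adjointness of $(-\Delta_h)^{-1}$ on $\mathring{\mathcal C}_{\rm per}$ yields the exact identity
\begin{equation*}
\tfrac12 \|a\|_{-1,h}^2 - \tfrac12 \|b\|_{-1,h}^2 = \ciptwo{(-\Delta_h)^{-1} a}{a - b} - \tfrac12 \|a - b\|_{-1,h}^2,
\end{equation*}
with $a = n^{m+1} - p^{m+1}$ and $b = n^m - p^m$; the discarded quadratic term is non-positive, and distributing $\ciptwo{(-\Delta_h)^{-1}(n^{m+1} - p^{m+1})}{\cdot}$ between $n^{m+1} - n^m$ and $p^{m+1} - p^m$ produces exactly the electrostatic contributions to $\mu_n^{m+1}$ and $\mu_p^{m+1}$. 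The stray ``$+1$'' appearing in $\delta_n E_h$ but absent from $\mu_n^{m+1}$ is annihilated upon pairing with the mass-conserved increment $n^{m+1} - n^m \in \mathring{\mathcal C}_{\rm per}$, so its omission is harmless.

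With the convexity bound in hand, I would substitute $n^{m+1} - n^m = \dt\, \nabla_h \cdot (\check{\mathcal M}_n^m \nabla_h \mu_n^{m+1})$ and the analogous equation for $p$, then apply the summation-by-parts identity of Lemma~\ref{lemma1} to transform the right-hand side into
\begin{equation*}
-\dt \bigl( \eipvec{\check{\mathcal M}_n^m \nabla_h \mu_n^{m+1}}{\nabla_h \mu_n^{m+1}} + \eipvec{\check{\mathcal M}_p^m \nabla_h \mu_p^{m+1}}{\nabla_h \mu_p^{m+1}} \bigr).
\end{equation*}
Because $\check{\mathcal M}_n^m$ and $\check{\mathcal M}_p^m$ are the face-centered arithmetic averages defined in~\eqref{mob ave-1} of the strictly positive cell-centered values $n^m$ and $Dp^m$, these mobilities are positive at every face, so the dissipation terms have the correct sign and rearrangement yields~\eqref{PNP-energy-0}. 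Iterating the one-step inequality then gives the uniform bound $E_h(n^m, p^m) \le E_h(n^0, p^0)$, and $E_h(n^0, p^0) \le C_0$ follows from the regularity of the initial data, since $x \ln x$ is bounded on any compact subset of $(0,\infty)$ and the discrete $H^{-1}$ seminorm is bounded whenever $n^0 - p^0$ is bounded. The most delicate bookkeeping is the electrostatic convexity split and the matching of its two tangent terms against $\mu_n^{m+1}$ and $\mu_p^{m+1}$; once that is clean, the remainder of the argument is a mechanical summation by parts.
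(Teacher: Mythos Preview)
Your proposal is correct and follows essentially the same strategy as the paper: test the scheme against the implicit chemical potentials, invoke the convexity of each piece of $E_h$ (entropy and electrostatic), and apply summation by parts via Lemma~\ref{lemma1}. The paper presents the steps in the reverse order (inner product first, then the three convexity inequalities \eqref{PNP-energy-2-1}--\eqref{PNP-energy-2-3}), and does not spell out the cancellation of the ``$+1$'' via mass conservation or the explicit polarization identity for $\|\cdot\|_{-1,h}^2$, but the substance is identical.
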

	\begin{proof} 
Taking discrete inner products of~\eqref{scheme-PNP-1} with $\mu_n^{m+1}$ and of~\eqref{scheme-PNP-2} with $\mu_p^{m+1}$, we obtain
	\begin{align}
& \ciptwo{n^{m+1} - n^m}{\mu_n^{m+1}} + \ciptwo{p^{m+1} - p^m}{\mu_p^{m+1}} 
	\nonumber
	\\
&\qquad + \dt \left( \eipvec{\check{\mathcal{M}}_n^m \nabla_h \mu_n^{m+1}}{\nabla_h \mu_n^{m+1}}  + \eipvec{\check{\mathcal{M}}_p^m \nabla_h \mu_p^{m+1}}{\nabla_h \mu_p^{m+1}} \right) = 0 .
	\label{PNP-energy-1} 
	\end{align}
On the other hand, the convexity of the energy terms $\langle n \ln n , {\bf 1}  \rangle$, $\langle p \ln p , {\bf 1}  \rangle$ and $\| n - p \|_{-1,h}^2$ imply that 
	\begin{align} 
\ciptwo{n^{m+1} - n^m}{\ln n^{m+1}} & \ge \ciptwo{n^{m+1} \ln n^{m+1}}{{\bf 1}} - \ciptwo{n^m \ln n^m}{ {\bf 1}} ,
	\label{PNP-energy-2-1}
	\\
\ciptwo{p^{m+1} - p^m}{\ln p^{m+1}} & \ge \ciptwo{p^{m+1} \ln p^{m+1}}{{\bf 1}} - \ciptwo{p^m \ln p^m}{ {\bf 1}} , 
     \label{PNP-energy-2-2}
	\\
\ciptwo{n^{m+1} - n^m}{(-\Delta_h)^{-1} ( n^{m+1} - p^{m+1} )} &+ \ciptwo{p^{m+1} - p^m}{ (-\Delta_h)^{-1} ( p^{m+1} - n^{m+1} )}  
	\nonumber
	\\ 
 & \ge \frac12 \left(  \nrm{n^{m+1} - p^{m+1}}_{-1,h}^2 - \nrm{ n^m - p^m }_{-1,h}^2 \right) .
	\label{PNP-energy-2-3}
	\end{align}  
Substitution of~\eqref{PNP-energy-2-1} -- \eqref{PNP-energy-2-3} into \eqref{PNP-energy-1} leads to~\eqref{PNP-energy-0}, so that the unconditional energy stability is proved. 

Finally, that there is a constant $C_0>0$ that independent of $h$, such that $E_h (n^0, p^0) \le C_0$, follows from a consistency argument. The details are left to the interested reader.
	\end{proof}

	\section{Optimal rate convergence analysis in $\ell^\infty (0,T; \ell^2) \cap \ell^2 (0,T; H_h^1)$}
	\label{sec:convergence}

Now we proceed into the convergence analysis. Let $({\mathsf N}, {\mathsf P}, \Phi)$ be the exact PDE solution for the non-dimensional PNP system~\eqref{equation-PNP-1-nd} -- \eqref{equation-PNP-3-nd}. With sufficiently regular initial data, it is reasonable to assume that the exact solution has regularity of class $\mathcal{R}$, where
	\begin{equation}
{\mathsf N}, {\mathsf P}  \in \mathcal{R} := H^4 \left(0,T; C_{\rm per}(\Omega)\right) \cap H^3 \left(0,T; C^2_{\rm per}(\Omega)\right) \cap L^\infty \left(0,T; C^6_{\rm per}(\Omega)\right).
	\label{assumption:regularity.1}
	\end{equation}
In addition, we assume that the following separation property is valid for the exact solution:
	\begin{equation} 
{\mathsf N}  \ge \epsilon_0 ,\quad  {\mathsf P} \ge \epsilon_0 ,  \quad \mbox{for some $\epsilon_0 > 0$} , 
    \label{assumption:separation}
	\end{equation}  
which we assume holds at a point-wise level. Define ${\mathsf N}_N (\, \cdot \, ,t) := {\cal P}_N {\mathsf N} (\, \cdot \, ,t)$, ${\mathsf P}_N (\, \cdot \, ,t) := {\cal P}_N {\mathsf P} (\, \cdot \, ,t)$, the (spatial) Fourier projection of the exact solution into ${\cal B}^K$, the space of trigonometric polynomials of degree to and including  $K$ (with $N=2K +1$).  The following projection approximation is standard: if $({\mathsf N}, {\mathsf P}) \in L^\infty(0,T;H^\ell_{\rm per}(\Omega))$, for any $\ell\in\mathbb{N}$ with $0 \le k \le \ell$, 
	\begin{equation}
\nrm{ {\mathsf N}_N - {\mathsf N}}_{L^\infty(0,T;H^k)} \le C h^{\ell-k} \| {\mathsf N} \|_{L^\infty(0,T;H^\ell)},  \ \nrm{ {\mathsf P}_N - {\mathsf P} }_{L^\infty(0,T;H^k)} \le C h^{\ell-k} \| {\mathsf P} \|_{L^\infty(0,T;H^\ell)} . \label{projection-est-0}
	\end{equation}
Notice that the Fourier projection estimate~\eqref{projection-est-0} does not preserve the positivity of the variables, while we could take $h$ sufficiently small (corresponding to a large $N$) so that ${\mathsf N}_N \ge \frac12 \epsilon_0$, ${\mathsf P}_N \ge \frac12 \epsilon_0$. 
	
	
By ${\mathsf N}_N^m$, ${\mathsf P}_N^m$ we denote $ {\mathsf N}_N (\, \cdot \, , t_m)$ and ${\mathsf P}_N (\, \cdot \, , t_m)$, respectively, with $t_m = m\cdot \dt$. Since $({\mathsf N}_N, {\mathsf P}_N) \in {\cal B}^K$, the mass conservative property is available at the discrete level:
	\begin{eqnarray} 
	  &&
\overline{ {\mathsf N}_N^m} = \frac{1}{|\Omega|}\int_\Omega \, {\mathsf N}_N ( \cdot, t_m) \, d {\bf x} = \frac{1}{|\Omega|}\int_\Omega \, {\mathsf N}_N ( \cdot, t_{m-1}) \, d {\bf x} = \overline{ {\mathsf N}_N^{m-1}} ,  \label{mass conserv-1-1}
\\
   	 &&
\overline{ {\mathsf P}_N^m} = \frac{1}{|\Omega|}\int_\Omega \, {\mathsf P}_N ( \cdot, t_m) \, d {\bf x} = \frac{1}{|\Omega|}\int_\Omega \, {\mathsf P}_N ( \cdot, t_{m-1}) \, d {\bf x} = \overline{ {\mathsf P}_N^{m-1}} ,  \label{mass conserv-1-2}
	\end{eqnarray}
for any $m \in \mathbb{N}$. On the other hand, the solution of~\eqref{scheme-PNP-1} -- \eqref{scheme-PNP-2} is also mass conservative at the discrete level:
	\begin{equation}
\overline{n^m} = \overline{n^{m-1}} ,  \, \, \, 
\overline{p^m} = \overline{p^{m-1}} , \quad \forall \ m \in \mathbb{N} . 
	\label{mass conserv-2}
	\end{equation}
As indicated before, we use the mass conservative projection for the initial data:  $n^0 = {\mathcal P}_h {\mathsf N}_N (\, \cdot \, , t=0)$, $p^0 = {\mathcal P}_h {\mathsf P}_N (\, \cdot \, , t=0)$, that is
	\begin{equation}
(n^0)_{i,j,k} := {\mathsf N}_N (p_i, p_j, p_k, t=0) , \quad (p^0)_{i,j,k} := {\mathsf P}_N (p_i, p_j, p_k, t=0) . 
	\label{initial data-0}
	\end{equation}	
For the exact electric potential $\Phi$, we denote its Fourier projection as $\Phi_N$. The error grid function is defined as
	\begin{equation}
e_n^m := \mathcal{P}_h {\mathsf N}_N^m - n^m ,  \, \, \, 
e_p^m := \mathcal{P}_h {\mathsf P}_N^m - p^m , \, \, \, 
e_\phi^m := \mathcal{P}_h \Phi_N^m - \phi^m , 
\quad \forall \ m \in \mathbb{N} .
	\label{error function-1}
	\end{equation}
Therefore, it follows that  $\overline{e_n^m} = \overline{e_p^m} =0$, for any $m \in \mathbb{N}$,  so that the discrete norm $\nrm{ \, \cdot \, }_{-1,h}$ is well defined for the error grid function.

The following theorem is the main result of this section.

\begin{thm}
	\label{thm:convergence}
Given initial data ${\mathsf N}(\, \cdot \, ,t=0), {\mathsf P} (\, \cdot \, ,t=0) \in C^6_{\rm per}(\Omega)$, suppose the exact solution for the PNP system~\eqref{equation-PNP-1-nd} -- \eqref{equation-PNP-2-nd} is of regularity class $\mathcal{R}$. Then, provided $\dt$ and $h$ are sufficiently small, and under the linear refinement requirement $C_1 h \le \dt \le C_2 h$, we have
	\begin{equation}
\| e_n^m \|_2 + \| e_p^m \|_2 + \Bigl( \dt   \sum_{k=1}^{m} ( \| \nabla_h e_n^k \|_2^2 
+ \| \nabla_h e_p^k \|_2^2 ) \Bigr)^{1/2}  + \| e_\phi^m \|_{H_h^2} \le C ( \dt + h^2 ) , 
	\label{convergence-0}
	\end{equation}
for all positive integers $m$, such that $t_m=m\dt \le T$, where $C>0$ is independent of $\dt$ and $h$.
	\end{thm}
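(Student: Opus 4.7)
The plan is to execute the three-stage program advertised in the introduction: (i) build a higher-order asymptotic expansion of the PDE solution so that an augmented profile $(\mathcal{N},\mathcal{P})$ satisfies the scheme with $O(\dt^3+h^4)$ consistency; (ii) perform a rough $\ell^2$ estimate against $(\mathcal{N},\mathcal{P})$ that, under the linear refinement $C_1 h\le \dt\le C_2 h$, promotes via a 3D inverse inequality to a uniform $\ell^\infty$ bound of the form $\tfrac{\epsilon_0}{4}\le n^m_{i,j,k}, p^m_{i,j,k}\le M^\star$; (iii) carry out the refined $\ell^\infty(0,T;\ell^2)\cap \ell^2(0,T;H_h^1)$ estimate on the true errors $(e_n^m,e_p^m,e_\phi^m)$, using the a priori bounds from (ii) to linearize the logarithm.

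For step (i), I would set
\[
\mathcal{N} := {\mathsf N}_N + \dt\,\mathsf{N}_{\dt,1} + \dt^2\mathsf{N}_{\dt,2} + h^2 \mathsf{N}_{h,2}, \quad \mathcal{P} := {\mathsf P}_N + \dt\,\mathsf{P}_{\dt,1} + \dt^2\mathsf{P}_{\dt,2} + h^2 \mathsf{P}_{h,2},
\]
with correctors chosen to kill the leading temporal truncation of the single-step scheme and the leading $O(h^2)$ truncation inside the centered operator $\nabla_h\cdot(\breve{\mathcal M}\nabla_h\mu)$; $\ln(\cdot)$ is Taylor-expanded around ${\mathsf N}_N,{\mathsf P}_N$, which is legitimate because the separation assumption \eqref{assumption:separation} together with class-$\mathcal{R}$ regularity yields $\mathcal{N}, \mathcal{P}\ge\epsilon_0/2$ for $\dt, h$ small. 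Each corrector solves a linearized PNP-type problem with smooth forcing and inherits smoothness from the exact solution. For step (ii), I introduce the rough errors $\tilde e_n^m := \mathcal{P}_h\mathcal{N}^m - n^m$, $\tilde e_p^m := \mathcal{P}_h\mathcal{P}^m - p^m$, test their governing equations with the rough analogue of $\mu_n^{m+1}-\hat\mu_n^{m+1}$, and exploit the convexity inequalities~\eqref{PNP-energy-2-1}--\eqref{PNP-energy-2-3}, the strict positivity of $\breve{\mathcal M}^m$ guaranteed by Theorem~\ref{PNP-positivity}, and Lemma~\ref{PNP-positivity-Lem-0} for the Poisson coupling. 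Discrete Gronwall yields $\|\tilde e_n^m\|_2 + \|\tilde e_p^m\|_2 \le C(\dt^3+h^4)$, and the 3D inverse inequality $\|\cdot\|_\infty\le C h^{-3/2}\|\cdot\|_2$ under $\dt\le C_2 h$ produces $\|\tilde e_n^m\|_\infty + \|\tilde e_p^m\|_\infty \le C \dt^{3/2}\to 0$, which delivers the desired $\ell^\infty$ bounds on $n^m,p^m$.

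For step (iii), I work with the original errors. The mean-value identity $\ln n^{m+1}-\ln(\mathcal{P}_h{\mathsf N}_N^{m+1}) = (\xi_n^{m+1})^{-1} e_n^{m+1}$, with $\xi_n^{m+1}\in[\epsilon_0/4, M^\star]$, linearizes the logarithm uniformly. Subtracting the truncated PDE from the scheme, the error equation reads
\[
\frac{e_n^{m+1}-e_n^m}{\dt} = \nabla_h\cdot\bigl(\breve{\mathcal M}_n^m \nabla_h \mathcal{E}_n^{m+1}\bigr) + \nabla_h\cdot\bigl((\breve{\mathcal M}_n^m - \breve{\mathcal N}_N^m)\nabla_h \hat\mu_n^{m+1}\bigr) + \tau_n^{m+1},
\]
where $\mathcal{E}_n^{m+1}:=(\xi_n^{m+1})^{-1} e_n^{m+1}+(-\Delta_h)^{-1}(e_n^{m+1}-e_p^{m+1})$, $\breve{\mathcal N}_N^m$ denotes the face-averaged exact density, and $\hat\mu_n^{m+1}$ is the smooth chemical potential associated with $\mathcal{P}_h{\mathsf N}_N$. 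Testing against $\mathcal{E}_n^{m+1}$, adding the $p$-analogue, and using the bounds on $\xi$ gives a contribution $\ge\tfrac{1}{M^\star}(\|e_n^{m+1}\|_2^2+\|e_p^{m+1}\|_2^2)$ from the logarithmic piece, telescoping on $\tfrac12\|e_n-e_p\|_{-1,h}^2$ from the Poisson piece, and the coercive term $\tfrac{\epsilon_0}{4}(\|\nabla_h\mathcal{E}_n^{m+1}\|_2^2+\|\nabla_h\mathcal{E}_p^{m+1}\|_2^2)$ from the mobility; discrete Gronwall closes the estimate, and elliptic regularity applied to $-\Delta_h e_\phi^m = e_p^m - e_n^m$ upgrades to the $H_h^2$ bound.

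The main obstacle will be the mobility-error term in step (iii), $\nabla_h\cdot\bigl((\breve{\mathcal M}_n^m - \breve{\mathcal N}_N^m)\nabla_h\hat\mu_n^{m+1}\bigr)$: its coefficient is essentially $-e_n^m$ to leading order, multiplied by an $O(1)$ gradient, which after testing produces the borderline term $[e_n^m\,\nabla_h\hat\mu_n^{m+1},\nabla_h\mathcal{E}_n^{m+1}]$. I would control it by Cauchy--Schwarz with a small parameter, absorbing one half into the coercive $\tfrac{\epsilon_0}{4}\|\nabla_h\mathcal{E}_n^{m+1}\|_2^2$ and bounding the remainder by $C\|\nabla_h\hat\mu_n^{m+1}\|_\infty^2 \|e_n^m\|_2^2$ using the smoothness of the exact chemical potential. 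This closure is viable \emph{only} because step (ii) produced an $\ell^\infty$ bound on $n^m$ uniform in $\dt, h$; any cruder consistency in step (i) would fail the inverse-inequality margin needed to separate $n^m$ from the singularity at $0$, and the entire coercivity chain would collapse.
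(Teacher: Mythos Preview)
Your three-stage plan matches the paper's architecture, but the execution of stages (ii) and (iii) has a structural gap: testing the error equation by the chemical-potential error $\mathcal{E}_n^{m+1}$ (or its rough analogue) does \emph{not} produce a Gronwall-closable inequality. The time-derivative term becomes $\tfrac{1}{\dt}\langle e_n^{m+1}-e_n^m,\,(\xi_n^{m+1})^{-1}e_n^{m+1}\rangle$; since the weight $(\xi_n^{m+1})^{-1}$ depends on $m+1$ and is only pointwise bounded, there is no telescoping of the form $\|e^{m+1}\|_2^2-\|e^m\|_2^2$. Bounding the cross term $\langle e_n^m,(\xi_n^{m+1})^{-1}e_n^{m+1}\rangle$ by Cauchy yields $\|e^{m+1}\|_2^2\le C'\|e^m\|_2^2+\dt(\cdots)$ with a multiplicative constant $C'>1$ that is \emph{not} $1+O(\dt)$, so iteration blows up. The same defect kills your step (ii) Gronwall claim. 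A secondary but equally fatal issue in step (ii): Theorem~\ref{PNP-positivity} guarantees $n^m_{i,j,k}>0$ pointwise but \emph{not} a uniform lower bound independent of $h,\dt$, so the mobility coercivity constant $\min\breve{\mathcal M}^m_n$ you invoke may degenerate.

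The paper resolves both issues by a different organization. Stage (ii) is \emph{not} a Gronwall argument: under an inductive a~priori assumption $\|\tilde n^m\|_2\le\dt^{11/4}+h^{11/4}$ (which supplies the uniform lower bound $n^m\ge\epsilon_0^\star/2$), one tests by $\tilde\mu_n^{m+1}$ and uses only the sign $\langle\tilde n^{m+1},\tilde\mu_n^{m+1}\rangle\ge0$, obtaining a \emph{single-step} rough bound $\|\tilde n^{m+1}\|_2\le C(\dt^{7/4}+h^{7/4})$---degraded, but still enough after the inverse inequality to give separation at $t^{m+1}$. Stage (iii) then tests by $2\tilde n^{m+1}$ (not the chemical potential), which gives the clean telescoping $\|\tilde n^{m+1}\|_2^2-\|\tilde n^m\|_2^2$ and the gradient term $\|\nabla_h\tilde n^{m+1}\|_2^2$ directly; the price is that the diffusion term $\langle\mathcal A(n^m)\nabla_h(\ln\check{\mathsf N}^{m+1}-\ln n^{m+1}),\nabla_h\tilde n^{m+1}\rangle$ is no longer automatically coercive. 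This is handled by the key Lemma~\ref{prelim est-1}, a careful fourth-order pointwise Taylor expansion of $D_x(\ln\check{\mathsf N}-\ln n)$ showing that it dominates $\gamma_n^{(0)}\|\nabla_h\tilde n^{m+1}\|_2^2$ up to $\|\tilde n^{m+1}\|_2^2$ and $h^8$ remainders. Your proposal has no analogue of this lemma, and without it (or a substitute) neither the telescoping nor the $\ell^2(0,T;H_h^1)$ bound in \eqref{convergence-0} can be obtained.
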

	
	
\subsection{Higher order consistency analysis of~\eqref{scheme-PNP-1} -- \eqref{scheme-PNP-chem pot-p}:  asymptotic expansion of the numerical solution}
\label{subsec-consistency}

By consistency, the project solution ${\mathsf N}_N$, ${\mathsf P}_N$ solves the discrete equations~\eqref{scheme-PNP-1} -- \eqref{scheme-PNP-chem pot-p} with a first order accuracy in time and second order accuracy in space. Meanwhile, it is observed that this leading local truncation error will not be enough to recover an a-priori $\ell^\infty$ bound for the numerical solution to recover the separation property. To remedy this, we use a higher order consistency analysis, via a perturbation argument, to recover such a bound in later analysis. In more detail, we need to construct supplementary fields, ${\mathsf N}_{\dt, 1}$, ${\mathsf N}_{\dt, 2}$, ${\mathsf P}_{\dt ,1}$, ${\mathsf P}_{\dt, 2}$ and $\check{\mathsf N}$, $\check{\mathsf P}$ satisfying
	\begin{equation}
 \check{\mathsf N} = {\mathsf N}_N + {\cal P}_N ( \dt {\mathsf N}_{\dt, 1} + \dt^2 {\mathsf N}_{\dt, 2} 
  + h^2 {\mathsf N}_{h, 1} ), \, \, \,  
 \check{\mathsf P} = {\mathsf P}_N + {\cal P}_N ( \dt {\mathsf P}_{\dt, 1} + \dt^2 {\mathsf P}_{\dt, 2} + h^2 {\mathsf P}_{h, 1} ) , 
	\label{consistency-1}
	\end{equation} 
so that a higher $O (\dt^3 + h^4)$ consistency is satisfied with the given numerical scheme~\eqref{scheme-PNP-1} -- \eqref{scheme-PNP-chem pot-p}.  The constructed fields ${\mathsf N}_{\dt ,j}$, ${\mathsf N}_{h, 1}$, ${\mathsf P}_{\dt, j}$, ${\mathsf P}_{h, 1}$which will be found using a perturbation expansion, will depend solely on the exact solution $({\mathsf N}, P)$.

The following truncation error analysis for the temporal discretization can be obtained by using a straightforward Taylor expansion, as well as the estimate~\eqref{projection-est-0} for the projection solution:  	
\begin{eqnarray} 
 \frac{{\mathsf N}_N^{m+1} - {\mathsf N}_N^m}{\dt} &=&  \nabla \cdot \Big( 
    {\mathsf N}_N^m \nabla ( 
    \ln  {\mathsf N}_N^{m+1}  +    
  (-\Delta)^{-1} ( {\mathsf N}_N^{m+1} - {\mathsf P}_N^{m+1} )  ) \Big)   \nonumber 
\\
  &&
   + \dt ( G_n^{(0)} )^m + O(\dt^2) + O (h^{m_0}) , 
     \label{consistency-2-1} 
\\ 
  \frac{ {\mathsf P}_N^{m+1} - {\mathsf P}_N^m}{\dt} &=&  \nabla \cdot \Big( 
  D {\mathsf P}_N^m \nabla ( 
    \ln  {\mathsf P}_N^{m+1} +    
  (-\Delta)^{-1} ( {\mathsf P}_N^{m+1} - {\mathsf N}_N^{m+1} )  )   \Big)   \nonumber 
\\
  &&
   + \dt ( G_p^{(0)} )^m + O(\dt^2) + O (h^{m_0}) .  
     \label{consistency-2-2} 
 \end{eqnarray}  
Here $m_0 \ge 4$ and the spatial functions $G_n^{(0)}$ $G_p^{(0)}$ are smooth enough in the sense that their derivatives are bounded. 

The leading order temporal correction function $({\mathsf N}_{\dt, 1}, {\mathsf P}_{\dt, 1})$ is given by solving the following equations:
\begin{eqnarray} 
  \partial_t {\mathsf N}_{\dt, 1}  &=& 
  \nabla \cdot \Big( 
    {\mathsf N}_{\dt, 1} \nabla ( 
    \ln  {\mathsf N}_N  +    
  (-\Delta)^{-1} ( {\mathsf N}_N - {\mathsf P}_N )  )    \nonumber 
\\
  &&  \quad 
   +   {\mathsf N}_N \nabla (   
    \frac{1}{{\mathsf N}_N} {\mathsf N}_{\dt, 1} +    
  (-\Delta)^{-1} ( {\mathsf N}_{\dt, 1} - {\mathsf P}_{\dt, 1} )  ) \Big)   
   - G_n^{(0)}  , 
     \label{consistency-3-1} 
\\
    \partial_t {\mathsf P}_{\dt, 1} &=& 
  \nabla \cdot \Big( 
  D {\mathsf P}_{\dt, 1} \nabla ( 
    \ln  {\mathsf P}_N  +    
  (-\Delta)^{-1} ( {\mathsf P}_N - {\mathsf N}_N )  )    \nonumber 
\\
  &&  \quad 
   + D {\mathsf P}_N \nabla (   
    \frac{1}{{\mathsf P}_N} {\mathsf P}_{\dt, 1} +    
  (-\Delta)^{-1} ( {\mathsf P}_{\dt, 1} - {\mathsf N}_{\dt, 1}  )  ) \Big)   
   - G_p^{(0)}  .  
     \label{consistency-3-2}      
\end{eqnarray}
Existence of a solution of the above linear PDE system is straightforward. Note that the solution depends only on the projection solution $({\mathsf N}_N, {\mathsf P}_N)$. In addition, the derivatives of $({\mathsf N}_{\dt, 1}, {\mathsf P}_{\dt, 1})$ in various orders are bounded. Of course, an application of the semi-implicit discretization (as given by~\eqref{consistency-2-1} -- \eqref{consistency-2-2}) to~\eqref{consistency-3-1} -- \eqref{consistency-3-2} implies that 
\begin{eqnarray} 
 \frac{ {\mathsf N}_{\dt, 1}^{m+1} - {\mathsf N}_{\dt, 1}^m}{\dt} &=& 
  \nabla \cdot \Big( 
    {\mathsf N}_{\dt, 1}^m \nabla ( 
    \ln  {\mathsf N}_N^{m+1}   +    
  (-\Delta)^{-1} ( {\mathsf N}_N^{m+1} - {\mathsf P}_N^{m+1} )  )    \nonumber 
\\
  &&  \, \,  
   +   {\mathsf N}_N^m \nabla (   
    \frac{1}{{\mathsf N}_N^{m+1} }  {\mathsf N}_{\dt, 1}^{m+1} 
     +    
  (-\Delta)^{-1} (  {\mathsf N}_{\dt, 1}^{m+1} - {\mathsf P}_{\dt, 1}^{m+1} )  ) \Big)    
  \nonumber 
\\
  &&  \quad 
   - ( G_n^{(0)} )^m  + \dt \hh_1^n + O (\dt^2) ,  \label{consistency-4-1} 
\\
   \frac{ {\mathsf P}_{\dt, 1}^{m+1} -  {\mathsf P}_{\dt, 1}^m}{\dt} &=& 
  \nabla \cdot \Big( 
  D {\mathsf P}_{\dt, 1}^m \nabla ( 
    \ln  {\mathsf P}_N^{m+1}  +    
  (-\Delta)^{-1} ( {\mathsf P}_N^{m+1} - {\mathsf N}_N^{m+1} )  )    \nonumber 
\\
  &&  \, \,  
   + D ( {\mathsf P}_N )^m \nabla (   
    \frac{1}{{\mathsf P}_N^{m+1} }  {\mathsf P}_{\dt, 1}^{m+1}  
    +    
  (-\Delta)^{-1} ( {\mathsf P}_{\dt, 1}^{m+1} - {\mathsf N}_{\dt, 1}^{m+1} )  ) \Big)   
  \nonumber 
\\
  &&  \quad 
   - ( G_p^{(0)} )^m + \dt \hh_2^n + O (\dt^2) .  
     \label{consistency-4-2}      
\end{eqnarray}
Therefore, a combination of~\eqref{consistency-2-1} -- \eqref{consistency-2-2} and \eqref{consistency-4-1} -- \eqref{consistency-4-2} leads to the second order temporal truncation error for $\check{\mathsf N}_1 := {\mathsf N}_N + \dt {\cal P}_N {\mathsf N}_{\dt, 1}$, $\check{\mathsf P}_1 := {\mathsf P}_N + \dt {\cal P}_N {\mathsf P}_{\dt, 1}$: 
\begin{eqnarray} 
  \frac{\check{\mathsf N}_1^{m+1} - \check{\mathsf N}_1^m}{\dt} &=& 
  \nabla \cdot \Big( 
    \check{\mathsf N}_1^m \nabla ( 
    \ln \check{\mathsf N}_1^{m+1}  +    
  (-\Delta)^{-1} ( \check{\mathsf N}_1^{m+1} - \check{\mathsf P}_1^{m+1} )  ) \Big)   \nonumber 
\\
  &&
   + \dt^2 ( G_n^{(1)} )^m + O(\dt^3) + O (h^{m_0}) , 
     \label{consistency-5-1} 
\\ 
   \frac{\check{\mathsf P}_1^{m+1} - \check{\mathsf P}_1^m}{\dt} &=& 
    \nabla \cdot \Big( 
   D \check{\mathsf P}_1^m \nabla ( 
     \ln \check{\mathsf P}_1^{m+1} +    
  (-\Delta)^{-1} ( \check{\mathsf P}_1^{m+1} - \check{\mathsf N}_1^{m+1} )  )   \Big)   \nonumber 
\\
  &&
   + \dt^2 ( G_p^{(1)} )^m + O(\dt^3) + O (h^{m_0}) .  
     \label{consistency-5-2} 
\end{eqnarray} 
In the derivation of~\eqref{consistency-5-1} -- \eqref{consistency-5-2}, the following linearized expansions have been utilized: 
\begin{eqnarray} 
	  &&
	\ln \check{\mathsf N}_1 = \ln ( {\mathsf N}_N + \dt {\cal P}_N {\mathsf N}_{\dt, 1} ) 
	= \ln {\mathsf N}_N + \frac{\dt {\cal P}_N {\mathsf N}_{\dt, 1}}{{\mathsf N}_N} + O (\dt^2) , 
	\label{consistency-6-1}	
\\
  &&
    \ln \check{\mathsf P}_1 = \ln ( {\mathsf P}_N + \dt {\cal P}_N {\mathsf P}_{\dt, 1} ) 
	= \ln {\mathsf P}_N + \frac{\dt {\cal P}_N {\mathsf P}_{\dt, 1}}{{\mathsf P}_N} + O (\dt^2) .   
	\label{consistency-6-2}       	
\end{eqnarray} 

Similarly, the next order temporal correction function $({\mathsf N}_{\dt, 2}, {\mathsf P}_{\dt, 2})$ are given by the following linear equations: 
\begin{eqnarray} 
  \partial_t {\mathsf N}_{\dt, 2}  &=& 
  \nabla \cdot \Big( 
    {\mathsf N}_{\dt, 2} \nabla ( 
    \ln  \check{\mathsf N}_1 +    
  (-\Delta)^{-1} ( \check{\mathsf N}_1 - \check{\mathsf P}_1 )  )    \nonumber 
\\
  &&  \quad 
   +   \check{\mathsf N}_1 \nabla (   
    \frac{1}{\check{\mathsf N}_1} {\mathsf N}_{\dt, 2} +    
  (-\Delta)^{-1} ( {\mathsf N}_{\dt, 2} - {\mathsf P}_{\dt, 2} )  ) \Big)   
   - G_n^{(1)}  , 
     \label{consistency-7-1} 
\\
    \partial_t {\mathsf P}_{\dt, 2} &=& 
  \nabla \cdot \Big( 
  D {\mathsf P}_{\dt, 2} \nabla ( 
    \ln \check{\mathsf P}_1 +    
  (-\Delta)^{-1} ( \check{\mathsf P}_1 - \check{\mathsf N}_1 )  )    \nonumber 
\\
  &&  \quad 
   + D \check{\mathsf P}_1 \nabla (   
    \frac{1}{\check{\mathsf P}_1} {\mathsf P}_{\dt, 2} +    
  (-\Delta)^{-1} ( {\mathsf P}_{\dt, 2} - {\mathsf N}_{\dt, 2}  )  ) \Big)   
   - G_p^{(1)} ,   
     \label{consistency-7-2}      
\end{eqnarray}
and the solution depends only on the exact solution $({\mathsf N}, P)$, with derivatives of various orders stay bounded. In turn, an application of the semi-implicit discretization to~\eqref{consistency-7-1} -- \eqref{consistency-7-2} implies that
\begin{eqnarray} 
  \frac{{\mathsf N}_{\dt, 2}^{m+1} - {\mathsf N}_{\dt, 2}^m}{\dt}  &=& 
  \nabla \cdot \Big( 
    {\mathsf N}_{\dt, 2}^m \nabla ( 
    \ln \check{\mathsf N}_1^{m+1} +    
  (-\Delta)^{-1} ( \check{\mathsf N}_1^{m+1} - \check{\mathsf P}_1^{m+1} )  )    \nonumber 
\\
  &&   
   +   \check{\mathsf N}_1^m \nabla (   
    \frac{1}{\check{\mathsf N}_1^{m+1} } {\mathsf N}_{\dt, 2}^{m+1} 
    +    
  (-\Delta)^{-1} ( {\mathsf N}_{\dt, 2}^{m+1} - {\mathsf P}_{\dt, 2}^{m+1} )  ) \Big)   
   - ( G_n^{(1)} )^m  , 
     \label{consistency-8-1} 
\\
   \frac{{\mathsf P}_{\dt, 2}^{m+1} - {\mathsf P}_{\dt, 2}^m}{\dt}   &=& 
  \nabla \cdot \Big( 
  D {\mathsf P}_{\dt, 2}^m \nabla ( 
    \ln \check{\mathsf P}_1^{m+1} +    
  (-\Delta)^{-1} ( \check{\mathsf P}_1^{m+1} - \check{\mathsf N}_1^{m+1} )  )    \nonumber 
\\
  &&   
   + D \check{\mathsf P}_1^m \nabla (   
    \frac{1}{\check{\mathsf P}_1^{m+1} } {\mathsf P}_{\dt, 2}^{m+1} 
    +    
  (-\Delta)^{-1} ( {\mathsf P}_{\dt, 2}^{m+1} - {\mathsf N}_{\dt, 2}^{m+1}  )  ) \Big)   
   - ( G_p^{(1)} )^m .   
     \label{consistency-8-2}      
\end{eqnarray}
Subsequently, a combination of~\eqref{consistency-7-1} -- \eqref{consistency-7-2} and \eqref{consistency-8-1} -- \eqref{consistency-8-2} yields the third order temporal truncation error for $\check{\mathsf N}_2 := \check{\mathsf N}_1 + \dt^2 {\cal P}_N {\mathsf N}_{\dt, 2}$, $\check{\mathsf P}_2 := \check{\mathsf P}_1 + \dt^2 {\cal P}_N {\mathsf P}_{\dt, 2}$: 
\begin{eqnarray} 
  \frac{\check{\mathsf N}_2^{m+1} - \check{\mathsf N}_2^m}{\dt} &=& 
  \nabla \cdot \Big( 
    \check{\mathsf N}_2^m \nabla ( 
    \ln \check{\mathsf N}_2^{m+1} +    
  (-\Delta)^{-1} ( \check{\mathsf N}_2^{m+1} - \check{\mathsf P}_2^{m+1} )  ) \Big)   \nonumber 
\\
  &&
   + \dt^3 ( G_n^{(2)} )^m + O(\dt^3) + O (h^{m_0}) , 
     \label{consistency-9-1} 
\\ 
   \frac{\check{\mathsf P}_2^{m+1} - \check{\mathsf P}_2^m}{\dt} &=& 
    \nabla \cdot \Big( 
   D \check{\mathsf P}_2^m \nabla ( 
     \ln \check{\mathsf P}_2^{m+1} +    
  (-\Delta)^{-1} ( \check{\mathsf P}_2^{m+1} - \check{\mathsf N}_2^{m+1} )  )   \Big)   \nonumber 
\\
  &&
   + \dt^3 ( G_p^{(2)} )^m + O(\dt^3) + O (h^{m_0}) .  
     \label{consistency-9-2} 
\end{eqnarray} 
In fact, similar linearized expansions (as in~\eqref{consistency-6-1} -- \eqref{consistency-6-2}) have been used in the derivation. 

Next, we construct the spatial correction term $({\mathsf N}_{h, 1}, {\mathsf P}_{h, 1})$ to upgrade the spatial accuracy order. The following truncation error analysis for the spatial discretization can be obtained by using a straightforward Taylor expansion for the constructed profile  
$(\check{\mathsf N}_2 , \check{\mathsf P}_2)$: 
\begin{eqnarray} 
  \frac{\check{\mathsf N}_2^{m+1} - \check{\mathsf N}_2^m}{\dt} &=& 
  \nabla_h \cdot \Big( 
    {\cal A} ( \check{\mathsf N}_2^m ) \nabla_h ( 
    \ln \check{\mathsf N}_2^{m+1} +    
  (-\Delta_h)^{-1} ( \check{\mathsf N}_2^{m+1} - \check{\mathsf P}_2^{m+1} )  ) \Big)   \nonumber 
\\
  &&
   + \dt^3 ( G_n^{(2)} )^m + h^2 ( H_n^{(0)} )^m + O (\dt^3 + h^4) , 
     \label{consistency-10-1} 
\\ 
   \frac{\check{\mathsf P}_2^{m+1} - \check{\mathsf P}_2^m}{\dt} &=& 
    \nabla_h \cdot \Big( 
   D {\cal A} ( \check{\mathsf P}_2^m ) \nabla_h ( 
     \ln \check{\mathsf P}_2^{m+1} +    
  (-\Delta_h)^{-1} ( \check{\mathsf P}_2^{m+1} - \check{\mathsf N}_2^{m+1} )  )   \Big)   \nonumber 
\\
  &&
   + \dt^3 ( G_p^{(2)} )^m + h^2 ( H_p^{(0)} )^m + O (\dt^3 + h^4) ,  
     \label{consistency-10-2} 
\end{eqnarray} 
in which the average operator is taken in a similar form as~\eqref{mob ave-1}. The spatially discrete functions $H_n^{(0)}$, $H_p^{(0)}$ are smooth enough in the sense that their discrete derivatives are bounded. We also notice that there is no $O (h^3)$ truncation error term, due to the fact that the centered difference used in the spatial discretization gives local truncation errors with only even order terms, $O (h^2)$, $O (h^4)$, etc. Subsequently, the spatial correction function $({\mathsf N}_{h, 1}, {\mathsf P}_{h, 1})$ is given by solving the following linear PDE system: 
given by the following linear equations: 
\begin{eqnarray} 
  \partial_t {\mathsf N}_{h, 1}  &=& 
  \nabla \cdot \Big( 
    {\mathsf N}_{h, 1} \nabla ( 
    \ln \check{\mathsf N}_2 +    
  (-\Delta)^{-1} ( \check{\mathsf N}_2 - \check{\mathsf P}_2 )  )    \nonumber 
\\
  &&  \quad 
   +   \check{\mathsf N}_2 \nabla (   
    \frac{1}{\check{\mathsf N}_2} {\mathsf N}_{h, 1} +    
  (-\Delta)^{-1} ( {\mathsf N}_{h, 1} - {\mathsf P}_{h, 1} )  ) \Big)   
   - H_n^{(0)}  , 
     \label{consistency-11-1} 
\\
    \partial_t {\mathsf P}_{h, 1} &=& 
  \nabla \cdot \Big( 
  D {\mathsf P}_{h, 1} \nabla ( 
    \ln \check{\mathsf P}_2 +    
  (-\Delta)^{-1} ( \check{\mathsf P}_2 - \check{\mathsf N}_2 )  )    \nonumber 
\\
  &&  \quad 
   + D \check{\mathsf P}_1 \nabla (   
    \frac{1}{\check{\mathsf P}_2} {\mathsf P}_{h, 1} +    
  (-\Delta)^{-1} ( {\mathsf P}_{h, 1} - {\mathsf N}_{h, 1}  )  ) \Big)   
   - H_p^{(0)} ,   
     \label{consistency-11-2}      
\end{eqnarray}
and the solution depends only on the exact solution $({\mathsf N},  P)$, with the divided differences of various orders stay bounded. In turn, an application of a full discretization to \eqref{consistency-11-1} -- \eqref{consistency-11-2}  implies that
\begin{eqnarray} 
  \frac{{\mathsf N}_{h, 1}^{m+1} - {\mathsf N}_{h, 1}^m}{\dt}  &=& 
  \nabla_h \cdot \Big( 
    {\cal A} ( {\mathsf N}_{h, 1}^m ) \nabla_h ( 
    \ln \check{\mathsf N}_2^{m+1} +    
  (-\Delta_h)^{-1} ( \check{\mathsf N}_2^{m+1} - \check{\mathsf P}_2^{m+1} )  )    \nonumber 
\\
  &&  \, \,  
   +   {\cal A} ( \check{\mathsf N}_2^m ) \nabla (   
    \frac{1}{\check{\mathsf N}_2^{m+1} } {\mathsf N}_{h, 1}^{m+1} 
    +    
  (-\Delta_h)^{-1} ( {\mathsf N}_{h, 1}^{m+1} - {\mathsf P}_{h, 1}^{m+1} )  ) \Big)    \nonumber 
\\
  &&  \quad 
   - ( H_n^{(0)} )^m  + O (\dt + h^2) , 
     \label{consistency-12-1} 
\\
  \frac{{\mathsf P}_{h, 1}^{m+1} - {\mathsf P}_{h, 1}^m}{\dt}  &=& 
  \nabla_h \cdot \Big( 
  D {\cal A} ( {\mathsf P}_{h, 1}^m ) \nabla_h ( 
    \ln \check{\mathsf P}_2^{m+1} +    
  (-\Delta_h)^{-1} ( \check{\mathsf P}_2^{m+1} - \check{\mathsf N}_2^{m+1} )  )    \nonumber 
\\
  &&  \quad 
   + D {\cal A} ( \check{\mathsf P}_2^m ) \nabla_h (   
    \frac{1}{\check{\mathsf P}_2^{m+1} } {\mathsf P}_{h, 1}^{m+1} 
    +    
  (-\Delta_h)^{-1} ( {\mathsf P}_{h, 1}^{m+1} - {\mathsf N}_{h, 1}^{m+1}  )  ) \Big)    \nonumber 
\\
  &&  \quad 
   - ( H_p^{(0)} )^m + O (\dt + h^2 ) .   
     \label{consistency-12-2}      
\end{eqnarray}
Finally, a combination of~\eqref{consistency-11-1} -- \eqref{consistency-11-2} and~\eqref{consistency-12-1} -- \eqref{consistency-12-2} yields the higher order temporal truncation error for $(\check{\mathsf N}, \check{\mathsf P})$ (as given by~\eqref{consistency-1}): 
\begin{eqnarray} 
  \frac{\check{\mathsf N}^{m+1} - \check{\mathsf N}^m}{\dt} &=& 
  \nabla_h \cdot \Big( 
    {\cal A} ( \check{\mathsf N}^m ) \nabla_h ( 
    \ln \check{\mathsf N}^{m+1} +    
  (-\Delta_h)^{-1} ( \check{\mathsf N}^{m+1} - \check{\mathsf P}^{m+1} )  ) \Big) + \tau_n^{m+1} ,  \label{consistency-13-1} 
\\ 
   \frac{\check{\mathsf P}^{m+1} - \check{\mathsf P}^m}{\dt} &=& 
    \nabla_h \cdot \Big( 
   D{\cal A} ( \check{\mathsf P}^m ) \nabla_h ( 
     \ln \check{\mathsf P}^{m+1} +    
  (-\Delta_h)^{-1} ( \check{\mathsf P}^{m+1} - \check{\mathsf N}^{m+1} )  )   \Big) + \tau_p^{m+1} ,  \label{consistency-13-2} 
\end{eqnarray} 
where
	\[
\| \tau_n^{m+1} \|_2 , \| \tau_p^{m+1} \|_2 \le C (\dt^3 + h^4).
	\]
Again, the linear expansions have been extensively utilized.

	\begin{rem} 
Trivial initial data ${\mathsf N}_{\dt, j} (\, \cdot \, , t=0), {\mathsf P}_{\dt, j} (\, \cdot \, , t=0) \equiv 0$ are given ($j=1, 2$) as in \eqref{consistency-3-1} -- \eqref{consistency-3-2}, \eqref{consistency-7-1} -- \eqref{consistency-7-2}, respectively. Similar trivial initial data is also imposed to $({\mathsf N}_{h, 1},  {\mathsf P}_{h, 1})$ as in \eqref{consistency-11-1} -- \eqref{consistency-11-2}. Therefore, using similar arguments as in~\eqref{mass conserv-1-1} -- \eqref{mass conserv-2}, we conclude that
	\begin{equation} 
n^0 \equiv  \check{\mathsf N}^0 ,  \quad  p^0 \equiv  \check{\mathsf P}^0 ,  \quad \overline{n^k} = \overline{n^0} ,  \quad  \overline{p^k} = \overline{p^0} , \quad \forall \, k \ge 0, 
     \label{consistency-14-1}
     \end{equation}
and
	\begin{eqnarray}
\overline{\check{\mathsf N}^k} &=& \frac{1}{|\Omega|}\int_\Omega \, \check{\mathsf N} ( \cdot, t_k) \, d {\bf x}  = \frac{1}{|\Omega|}\int_\Omega \, \check{\mathsf N}^0 \, d {\bf x} = \overline{n^0} ,  \quad \forall \, k \ge 0,  
	\label{consistency-14-2}
	\\
\overline{\check{\mathsf P}^k} &=& \frac{1}{|\Omega|}\int_\Omega \, \check{\mathsf P} ( \cdot, t_k) \, d {\bf x}  = \frac{1}{|\Omega|}\int_\Omega \, \check{\mathsf P}^0 \, d {\bf x}  = \overline{p^0} ,  \quad \forall \, k \ge 0,  	
	\label{consistency-14-3}     
	\end{eqnarray}
where the first step of~\eqref{consistency-14-2} is based on the fact that $\check{\mathsf N} \in {\cal B}^K$, and the second step comes from the mass conservative property of $\check{\mathsf N}$ at the continuous level. These two properties will be used in later analysis. 

In addition, since $(\check{\mathsf N}, \check{\mathsf P})$ is mass conservative at a discrete level, as given by~\eqref{consistency-14-2} -- \eqref{consistency-14-3}, we observe that the local truncation error $\tau_n$, $\tau_p$ has a similar property: 
\begin{equation} 
  \overline{\tau_n^{m+1}} = \overline{\tau_p^{m+1}} = 0 ,  \quad \forall \, m \ge 0 . \label{consistency-14-4}
\end{equation}  
\end{rem}

\begin{rem} 
Since the temporal and spatial correction functions $({\mathsf N}_{\dt, j}, {\mathsf P}_{\dt, j})$, $({\mathsf N}_{h, 1}, {\mathsf P}_{h, 1})$ are bounded, we recall the separation property~\eqref{assumption:separation} for the exact solution, and obtain a similar property for the constructed profile $(\check{\mathsf N}, \check{\mathsf P})$: 
\begin{equation} 
  \check{\mathsf N} \ge \epsilon_0^\star , \, \, \, \check{\mathsf P}  \ge \epsilon_0^\star ,  
   \quad \mbox{for $\epsilon_0^\star > 0$} , 
    \label{assumption:separation-2}
	\end{equation}  
in which the projection estimate~\eqref{projection-est-0} has been repeatedly used. Notice that we could take $\dt$ and $h$ sufficiently small so that~\eqref{assumption:separation-2} is valid for a modified value $\epsilon_0^\star$, such as $\epsilon_0^\star  = \frac14 \epsilon_0$. Such a uniform bound will be used in the convergence analysis.
 
In addition, since the correction functions only depend on $({\mathsf N}_N, {\mathsf P}_N)$ and the exact solution, its $W^{1,\infty}$ norm will stay bounded. In turn, we are able to obtain a discrete $W^{1,\infty}$ bound for the  constructed profile $(\check{\mathsf N}, \check{\mathsf P})$:
\begin{equation} 
  \| \check{\mathsf N}^k \|_\infty \le C^\star , \, \, \, 
  \| \check{\mathsf P}^k \|_\infty \le C^\star  , \, \, \, 
  \| \nabla_h \check{\mathsf N}^k \|_\infty \le C^\star , \, \, \, 
  \| \nabla_h \check{\mathsf P}^k \|_\infty \le C^\star  ,  \quad \forall \, k \ge 0 .  
    \label{assumption:W1-infty bound}
	\end{equation}  
\end{rem}

\begin{rem}
The reason for such a higher order asymptotic expansion and truncation error estimate is to justify an a-priori $\ell^\infty$ bound of the numerical solution, which is needed to obtain the separation property, similarly formulated as~\eqref{assumption:separation-2} for the constructed approximate solution. With such a property valid for both the constructed approximate solution and the numerical solution, the nonlinear error term could be appropriately analyzed in the $\ell^\infty (0,T; \ell^2)$ convergence estimate. 
\end{rem}

\subsection{A rough error estimate} 

Instead of a direct analysis for the error function defined in~\eqref{error function-1}, we introduce alternate numerical error functions: 
	\begin{equation}
\tilde{n}^m := \mathcal{P}_h \check{\mathsf N}^m - n^m ,  \, \, \, 
\tilde{p}^m := \mathcal{P}_h \check{\mathsf P}^m - p^m , \, \, \, 
\tilde{\phi}^m :=   (-\Delta_h)^{-1} ( \tilde{p}^m - \tilde{n}^m ) , 
\quad \forall \ m \in \mathbb{N} .
	\label{error function-2}
	\end{equation}
The advantage of such a numerical error function is associated with its higher order accuracy, which comes from the higher order consistency estimate~\eqref{consistency-13-1} -- \eqref{consistency-13-2}. Again, since $\overline{\tilde{n}^m} = \overline{\tilde{p}^m} = 0$, which comes from the fact~\eqref{consistency-14-1} -- \eqref{consistency-14-3}, for any $m \ge 0$, we conclude  that the discrete norm $\nrm{ \, \cdot \, }_{-1,h}$ is well defined for the error grid function $(\tilde{n}^m, \tilde{p}^m)$. 

In turn, subtracting the numerical scheme~\eqref{scheme-PNP-1} -- \eqref{scheme-PNP-chem pot-p} from the consistency estimate~\eqref{consistency-13-1} -- \eqref{consistency-13-2} yields
\begin{eqnarray} 
  \frac{\tilde{n}^{m+1} - \tilde{n}^m}{\dt} &=& 	
  \nabla_h \cdot \Big( 
    {\cal A} ( n^m ) \nabla_h \tilde{\mu}_n^{m+1} 
  +   {\cal A} ( \tilde{n}^m ) \nabla_h {\cal V}_n^{m+1}  \Big) 
  + \tau_n^{m+1} , \label{error equation-1} 
\\
  \frac{\tilde{p}^{m+1} - \tilde{p}^m}{\dt} &=& 	
  \nabla_h \cdot \Big( 
  D {\cal A} ( p^m ) \nabla_h \tilde{\mu}_p^{m+1} 
  + D {\cal A} ( \tilde{p}^m ) \nabla_h {\cal V}_p^{m+1}  \Big) 
  + \tau_p^{m+1} , \label{error equation-2}   
	\end{eqnarray} 
where
	\begin{eqnarray} 
\tilde{\mu}_n^{m+1} &=& 	\ln \check{\mathsf N}^{m+1} - \ln n^{m+1}  +    (-\Delta_h)^{-1} ( \tilde{n}^{m+1} - \tilde{p}^{m+1} ) ,
	\label{error equation-3}   
	\\
{\cal V}_n^{m+1} &=&   \ln \check{\mathsf N}^{m+1} + (-\Delta_h)^{-1} ( \check{\mathsf N}^{m+1} - \check{\mathsf P}^{m+1} )   ,  
	\label{error equation-4}  
	\\
\tilde{\mu}_p^{m+1} &=& 	\ln \check{\mathsf P}^{m+1} - \ln p^{m+1} +    (-\Delta_h)^{-1} ( \tilde{p}^{m+1} - \tilde{n}^{m+1} ) ,
	\label{error equation-5}   
	\\
{\cal V}_p^{m+1} &=& \ln \check{\mathsf P}^{m+1} + (-\Delta_h)^{-1} ( \check{\mathsf P}^{m+1} - \check{\mathsf N}^{m+1} )   .  
	\label{error equation-6}  
	\end{eqnarray}

Since ${\cal V}_n^{m+1}$ and ${\cal V}_p^{m+1}$ only depend on the exact solution and the constructed profiles, we assume a discrete $W^{2,\infty}$ bound: 
	\begin{equation} 
\|  {\cal V}_n^{m+1} \|_{W_h^{2,\infty}} , \ \|  {\cal V}_p^{m+1} \|_{W_h^{2,\infty}}  \le C^\star . 
	\label{assumption:W2-infty bound}  
	\end{equation} 
To proceed with the nonlinear analysis, we make the following a-priori assumption at the previous time step: 
\begin{eqnarray} 
  \| \tilde{n}^m \|_2 , \, \| \tilde{p}^m \|_2 \le \dt^\frac{11}{4} + h^\frac{11}{4}  . 
   \label{a priori-1} 
\end{eqnarray} 
Such an a-priori assumption will be recovered by the optimal rate convergence analysis at the next time step, as will be demonstrated later. In turn, a discrete $W^{1,\infty}$ bound is available for the numerical error function at the previous time step, with the help of inverse inequality: 
	\begin{eqnarray} 
&& \| \tilde{n}^m \|_\infty  \le \frac{C \| \tilde{n}^m \|_2}{h^\frac32} \le   \frac{C ( \dt^\frac{11}{4} + h^\frac{11}{4} ) }{h^\frac32} \le  C ( \dt^\frac{5}{4} + h^\frac{5}{4} ) \le 1 ,
	\label{a priori-2} 
	\\
&&  \| \nabla_h \tilde{n}^m \|_\infty  \le \frac{C \| \tilde{n}^m \|_\infty}{h} \le   \frac{C ( \dt^\frac{5}{4} + h^\frac{5}{4} ) }{h} \le  C ( \dt^\frac{1}{4} + h^\frac{1}{4} ) \le 1  ,
	\label{a priori-3}    
	\end{eqnarray}
where the linear refinement constraint $C_1 h \le \dt \le C_h$ has been used. By similar arguments,
	\begin{equation}
\| \tilde{p}^m \|_\infty  \le  C ( \dt^\frac{5}{4} + h^\frac{5}{4} ) \le 1  \quad \mbox{and} \quad  \| \nabla_h \tilde{p}^m \|_\infty  \le  C ( \dt^\frac{1}{4} + h^\frac{1}{4} ) \le 1  . 
    \label{a priori-4}     
	\end{equation} 
Subsequently, the following $W_h^{1,\infty}$ bound is available for the numerical solution at the previous time step: 
	\begin{eqnarray} 
\| n^m \|_\infty &\le& \| \check{\mathsf N}^m \|_\infty +  \| \tilde{n}^m \|_\infty \le \tilde{C}_3 := C^\star +1 ,
	\label{a priori-5}
  	\\
\| p^m \|_\infty &\le& \| \check{\mathsf P}^m \|_\infty +  \| \tilde{p}^m \|_\infty  \le   \tilde{C}_3 ,
	\label{a priori-5-b} 
	\\
\| \nabla_h n^m \|_\infty &\le& \| \nabla_h \check{\mathsf N}^m \|_\infty  +  \| \nabla_h \tilde{n}^m \|_\infty  \le C^\star +1 = \tilde{C}_3 ,
	\label{a priori-6}  
	\\
\| \nabla_h p^m \|_\infty &\le& \| \nabla_h \check{\mathsf P}^m \|_\infty  +  \| \nabla_h \tilde{p}^m \|_\infty  \le  C^\star +1  = \tilde{C}_3 ,
	\label{a priori-7}   
	\end{eqnarray}           
with the regularity assumption~\eqref{assumption:W1-infty bound} applied. In addition, because of the $\ell^\infty$ estimate~\eqref{a priori-2}, \eqref{a priori-4} for the numerical error function, we can bound it by $\frac{\epsilon_0^\star}{2}$:  
	\begin{equation} 
\| \tilde{n}^m \|_\infty \le  C ( \dt^\frac{5}{4} + h^\frac{5}{4} ) \le \frac{\epsilon_0^\star}{2}  \quad \mbox{and} \quad \| \tilde{p}^m \|_\infty \le  C ( \dt^\frac{5}{4} + h^\frac{5}{4} ) \le \frac{\epsilon_0^\star}{2} ,
	\label{a priori-8}     
	\end{equation} 
so that the separation property is also valid for the numerical solution at the previous time step: 
	\begin{equation} 
n^m \ge \check{\mathsf N}^m -  \| \tilde{n}^m \|_\infty  \ge \frac{\epsilon_0^\star}{2} \quad \mbox{and} \quad  p^m \ge \check{\mathsf P}^m - \| \tilde{p}^m \|_\infty  \ge \frac{\epsilon_0^\star}{2} ,
    \label{assumption:separation-3}
	\end{equation}  
where the separation estimate~\eqref{assumption:separation-2} has been utilized. 

Taking a discrete inner product with~\eqref{error equation-1}, \eqref{error equation-2} by $\tilde{\mu}_n^{n+1}$, $\tilde{\mu}_p^{n+1}$, respectively, leads to 
	\begin{align} 
\langle \tilde{n}^{m+1} , \tilde{\mu}_n^{m+1} \rangle  & + \langle \tilde{p}^{m+1} , \tilde{\mu}_p^{m+1} \rangle + \dt (   \langle {\cal A} ( n^m )  \nabla_h \tilde{\mu}_n^{m+1} ,  \nabla_h \tilde{\mu}_n^{m+1}  \rangle + D \langle {\cal A} ( p^m )  \nabla_h \tilde{\mu}_p^{m+1} , \nabla_h \tilde{\mu}_p^{m+1}  \rangle  )
	\nonumber 
	\\
& = \langle \tilde{n}^m , \tilde{\mu}_n^{m+1} \rangle 
  + \langle \tilde{p}^m , \tilde{\mu}_p^{m+1} \rangle  
  + \dt  ( \langle \tau_n^{m+1} , \tilde{\mu}_n^{m+1} \rangle 
  + \langle \tau_p^{m+1} , \tilde{\mu}_p^{m+1} \rangle ) \nonumber 
\\
& \quad- \dt (   \langle {\cal A} ( \tilde{n}^m )  \nabla_h {\cal V}_n^{m+1} ,  
    \nabla_h \tilde{\mu}_n^{m+1}  \rangle   
  + D \langle {\cal A} ( \tilde{p}^m )  \nabla_h {\cal V}_p^{m+1} ,  
    \nabla_h \tilde{\mu}_p^{m+1}  \rangle  )   . 
  	\label{convergence-rough-1} 
	\end{align} 
Because of the separation estimate~\eqref{assumption:separation-3}, at a point-wise level, the following inequalities are available: 
	\begin{align}   
\langle {\cal A} ( n^m )  \nabla_h \tilde{\mu}_n^{m+1} ,  \nabla_h \tilde{\mu}_n^{m+1}  \rangle &\ge   \frac{\epsilon_0^\star}{2}  \| \nabla_h \tilde{\mu}_n^{m+1} \|_2^2 , 
    \label{convergence-rough-2}  
	\\
\langle {\cal A} ( p^m )  \nabla_h \tilde{\mu}_p^{m+1} ,  \nabla_h \tilde{\mu}_p^{m+1}  \rangle &\ge   \frac{\epsilon_0^\star}{2}  \| \nabla_h \tilde{\mu}_p^{m+1} \|_2^2 .  
    \label{convergence-rough-2-b}       
	\end{align}        
By the mean-free property~\eqref{consistency-14-4} for the local truncation error terms, the following estimate can be derived: 
	\begin{eqnarray} 
\langle \tau_n^{m+1} , \tilde{\mu}_n^{m+1} \rangle  &\le& \| \tau_n^{m+1} \|_{-1,h} \cdot \| \nabla_h \tilde{\mu}_n^{m+1} \|_2  \le   \frac{2}{  \epsilon_0^\star} \| \tau_n^{m+1} \|_{-1,h}^2  + \frac18   \epsilon_0^\star \| \nabla_h \tilde{\mu}_n^{m+1} \|_2^2 ,  
	\label{convergence-rough-3-1} 
	\\
\langle \tau_p^{m+1} , \tilde{\mu}_p^{m+1} \rangle  &\le& \| \tau_p^{m+1} \|_{-1,h} \cdot \| \nabla_h \tilde{\mu}_p^{m+1} \|_2 \le   \frac{2}{D \epsilon_0^\star}\| \tau_p^{m+1} \|_{-1,h}^2 + \frac18 D \epsilon_0^\star \| \nabla_h \tilde{\mu}_p^{m+1} \|_2^2 .   
	\label{convergence-rough-3-2}   
	\end{eqnarray}  
For the two terms $\langle \tilde{n}^m , \tilde{\mu}_n^{m+1} \rangle$ and 
$\langle \tilde{p}^m , \tilde{\mu}_p^{m+1} \rangle$, an application of the Cauchy inequality reveals that 
	\begin{eqnarray} 
\langle \tilde{n}^m , \tilde{\mu}_n^{m+1} \rangle &\le&  \| \tilde{n}^m \|_{-1,h} \cdot \| \nabla_h \tilde{\mu}_n^{m+1} \|_2 
  \le  \frac{2}{  \epsilon_0^\star \dt} \| \tilde{n}^m \|_{-1,h}^2 
  + \frac18   \epsilon_0^\star \dt \| \nabla_h \tilde{\mu}_n^{m+1} \|_2^2  ,  
	\label{convergence-rough-4-1} 
	\\
\langle \tilde{p}^m , \tilde{\mu}_p^{m+1} \rangle &\le&  \| \tilde{p}^m \|_{-1,h} \cdot \| \nabla_h \tilde{\mu}_p^{m+1} \|_2 
  \le  \frac{2}{D \epsilon_0^\star \dt} \| \tilde{p}^m \|_{-1,h}^2 
  + \frac18 D \epsilon_0^\star \dt \| \nabla_h \tilde{\mu}_p^{m+1} \|_2^2  .   
	\label{convergence-rough-4-2}   
	\end{eqnarray}    
For the last two terms on the right hand side of~\eqref{convergence-rough-1}, we see that 
	\begin{eqnarray}  
  -    \langle {\cal A} ( \tilde{n}^m )  \nabla_h {\cal V}_n^{m+1} ,  
    \nabla_h \tilde{\mu}_n^{m+1}  \rangle   
  &\le&   \| \nabla_h {\cal V}_n^{m+1}  \|_\infty 
  \cdot \| {\cal A} ( \tilde{n}^m )  \|_2 \cdot  \| \nabla_h \tilde{\mu}_n^{m+1}  \|_2  
  \nonumber 
\\
  &\le&
       C^\star    \|  \tilde{n}^m  \|_2 
  \cdot  \| \nabla_h \tilde{\mu}_n^{m+1}  \|_2    \nonumber 
\\
  &\le& 
  \frac{2 (C^\star)^2   }{\epsilon_0^\star}  \|  \tilde{n}^m  \|_2^2 
  + \frac18   \epsilon_0^\star \| \nabla_h \tilde{\mu}_n^{m+1} \|_2^2  , 
  \label{convergence-rough-5-1} 
	\end{eqnarray}
and, similarly,
	\begin{equation}
-  D \langle {\cal A} ( \tilde{p}^m )  \nabla_h {\cal V}_p^{m+1} , \nabla_h \tilde{\mu}_p^{m+1}  \rangle \le \frac{2 (C^\star)^2 D }{\epsilon_0^\star}  \|  \tilde{p}^m  \|_2^2  + \frac18 D \epsilon_0^\star \| \nabla_h \tilde{\mu}_p^{m+1} \|_2^2  .   
	\label{convergence-rough-5-2}   
	\end{equation} 
 A substitution of~\eqref{convergence-rough-2} -- \eqref{convergence-rough-5-2} into~\eqref{convergence-rough-1} leads to 
	\begin{align} 
\langle \tilde{n}^{m+1} , \tilde{\mu}_n^{m+1} \rangle & + \langle \tilde{p}^{m+1} , \tilde{\mu}_p^{m+1} \rangle  
  + \frac{\epsilon_0^\star}{2}  \dt (   \| \nabla_h \tilde{\mu}_n^{m+1} \|_2^2 
  + D  \| \nabla_h \tilde{\mu}_p^{m+1} \|_2^2 )
	\nonumber 
	\\
&\le  \frac{2}{  \epsilon_0^\star \dt} \| \tilde{n}^m \|_{-1,h}^2 
  + \frac{2}{D \epsilon_0^\star \dt} \| \tilde{p}^m \|_{-1,h}^2  
  + \frac{2 \dt}{  \epsilon_0^\star} \| \tau_n^{m+1} \|_{-1,h}^2 
  + \frac{2 \dt}{D \epsilon_0^\star}  \| \tau_p^{m+1} \|_{-1,h}^2
	\nonumber 
	\\
& \quad + 2 (C^\star)^2 (\epsilon_0^\star )^{-1} \dt (  \|  \tilde{n}^m  \|_2^2 + D^{-1}  \|  \tilde{p}^m  \|_2^2  ) .  
	\label{convergence-rough-6} 
	\end{align} 
Moreover, the detailed expansions in~\eqref{error equation-3} and \eqref{error equation-5} reveal the following identities: 
	\begin{eqnarray} 	
\langle \ln \check{\mathsf N}^{m+1} - \ln n^{m+1} , \tilde{n}^{m+1} \rangle &=& \langle \ln \check{\mathsf N}^{m+1} - \ln n^{m+1} , \check{\mathsf N}^{m+1} - n^{m+1}  \rangle   \ge 0 ,  
    \label{convergence-rough-7-1} 
	\\
\langle \ln \check{\mathsf P}^{m+1} - \ln p^{m+1} , \tilde{p}^{m+1} \rangle &=& \langle \ln \check{\mathsf P}^{m+1} - \ln p^{m+1} , 
   \check{\mathsf P}^{m+1} - p^{m+1}  \rangle   \ge 0 ,  
    \label{convergence-rough-7-2}         
	\end{eqnarray}
and
	\begin{eqnarray}
 \langle (-\Delta_h)^{-1} ( \tilde{n}^{m+1} - \tilde{p}^{m+1} ) , 
    \tilde{n}^{m+1} \rangle 
    + \langle (-\Delta_h)^{-1} ( \tilde{p}^{m+1} - \tilde{n}^{m+1} ) , 
    \tilde{p}^{m+1} \rangle & =& \|  \tilde{n}^{m+1} - \tilde{p}^{m+1} \|_{-1,h}^2 
	\nonumber
	\\
& \ge & 0 ,  
    \label{convergence-rough-7-3}        
	\end{eqnarray} 
where the positivities of $(n^{m+1}, p^{m+1})$ and $(\check{\mathsf N}^{m+1}, \check{\mathsf P}^{m+1})$ have been applied in the derivation of~\eqref{convergence-rough-7-1} and \eqref{convergence-rough-7-2}. Then we conclude that 
	\begin{equation} 
\langle \tilde{n}^{m+1} , \tilde{\mu}_n^{m+1} \rangle  + \langle \tilde{p}^{m+1} , \tilde{\mu}_p^{m+1} \rangle  \ge 0 . 
	\label{convergence-rough-7-4}        
	\end{equation}  
For the right hand side of~\eqref{convergence-rough-6}, the following estimates are available, which come from the a-priori assumption~\eqref{a priori-1}: 
	\begin{eqnarray} 
 \frac{2}{  \epsilon_0^\star \dt} \| \tilde{n}^m \|_{-1,h}^2  &\le& \frac{C}{  \epsilon_0^\star \dt} \| \tilde{n}^m \|_2^2  \le C ( \dt^\frac92 + h^\frac92 ) ,   
 	\label{convergence-rough-8-1}    
	\\
\frac{2}{D \epsilon_0^\star \dt} \| \tilde{p}^m \|_{-1,h}^2   &\le& \frac{C}{D \epsilon_0^\star \dt} \| \tilde{p}^m \|_2^2    
    \le C ( \dt^\frac92 + h^\frac92 )  , 
    \label{convergence-rough-8-2}  
	\\
\frac{2 \dt}{  \epsilon_0^\star} \| \tau_n^{m+1} \|_{-1,h}^2 &\le& C \dt \| \tau_n^{m+1} \|_2^2  \le C ( \dt^7 + \dt h^8) ,   
    \label{convergence-rough-8-3} 
	\\
\frac{2 \dt}{D \epsilon_0^\star}  \| \tau_p^{m+1} \|_{-1,h}^2 &\le& C \dt \| \tau_p^{m+1} \|_2^2 \le C ( \dt^7 + \dt h^8) ,  
    \label{convergence-rough-8-4}    
	\\
2 (C^\star)^2 (\epsilon_0^\star )^{-1} \dt  \|  \tilde{n}^m  \|_2^2 &\le& C \dt  \|  \tilde{n}^m  \|_2^2 \le C   ( \dt^\frac{13}{2} + h^\frac{13}{2} ) ,
	\label{convergence-rough-8-5}   
	\\
2 (C^\star)^2 (\epsilon_0^\star )^{-1}  D^{-1}  \dt \|  \tilde{p}^m  \|_2^2  &\le&  C \dt  \|  \tilde{p}^m  \|_2^2  \le C   ( \dt^\frac{13}{2} + h^\frac{13}{2} ) ,   
	\label{convergence-rough-8-6}      
	\end{eqnarray}
where the fact that $\| f \|_{-1,h} \le C \| f \|_2$, as well as the linear refinement constraint $C_1 h \le \dt \le C_2 h$, have been repeatedly applied. Going back~\eqref{convergence-rough-6}, we obtain 
	\begin{equation}
    \frac{\epsilon_0^\star}{2}  \dt (   \| \nabla_h \tilde{\mu}_n^{m+1} \|_2^2 
  + D  \| \nabla_h \tilde{\mu}_p^{m+1} \|_2^2 )  
  \le C ( \dt^\frac92 + h^\frac92 ) ,
  \label{convergence-rough-9-1} 
	\end{equation}
so that
	\begin{equation}
  \| \nabla_h \tilde{\mu}_n^{m+1} \|_2 ,  \| \nabla_h \tilde{\mu}_p^{m+1} \|_2  
  \le  C ( \dt^\frac74 + h^\frac74 )  .
	\label{convergence-rough-9-2}    
	\end{equation} 
Meanwhile, the error evolutionary equation~\eqref{error equation-1} implies that 
\begin{eqnarray} 
  \| \tilde{n}^{m+1} - \tilde{n}^m \|_2  	
  \le   \dt ( \| \nabla_h \cdot ( {\cal A} ( n^m ) \nabla_h \tilde{\mu}_n^{m+1} ) \|_2 
  + \| \nabla_h \cdot (  {\cal A} ( \tilde{n}^m ) \nabla_h {\cal V}_n^{m+1} ) \|_2  ) 
  + \dt \| \tau_n^{m+1} \|_2 .  \label{convergence-rough-10-1} 
\end{eqnarray}  
Furthermore, the following estimate is available for the first term, based on a detailed nonlinear expansion in the finite difference space, as well as repeated applications of discrete H\"older inequality: 
	\begin{eqnarray} 
\| \nabla_h \cdot ( {\cal A} ( n^m ) \nabla_h \tilde{\mu}_n^{m+1} ) \|_2 &\le& C ( \| n^m \|_\infty \cdot \| \nabla_h \nabla_h  \tilde{\mu}_n^{m+1} \|_2 +  \| \nabla_h n^m \|_\infty \cdot \| \nabla_h  \tilde{\mu}_n^{m+1} \|_2  )  
	\nonumber  
	\\
&\le& C \tilde{C}_3 ( \| \nabla_h \nabla_h  \tilde{\mu}_n^{m+1} \|_2  
  +  \| \nabla_h  \tilde{\mu}_n^{m+1} \|_2  )
	\nonumber
	\\
&\le& C  ( \dt^\frac34 + h^\frac34 ) ,   
    \label{convergence-rough-10-2} 
	\end{eqnarray}  
in which the a-priori estimates~\eqref{a priori-5}, \eqref{a priori-6} have been used in the second step, and the following inverse inequality has been applied in the last step: 
\begin{equation} 
   \| \nabla_h \nabla_h  \tilde{\mu}_n^{m+1} \|_2  
  \le  \frac{C  \| \nabla_h  \tilde{\mu}_n^{m+1} \|_2}{h}      
  \le C  ( \dt^\frac34 + h^\frac34 ) .  \label{convergence-rough-10-3} 
\end{equation}    
The second term on the right hand side of~\eqref{convergence-rough-10-1} could be similarly analyzed: 
	\begin{eqnarray} 
\| \nabla_h \cdot (  {\cal A} ( \tilde{n}^m ) \nabla_h {\cal V}_n^{m+1} ) \|_2 &\le& C ( \| \tilde{n}^m \|_2 \cdot \| \nabla_h \nabla_h  {\cal V}_n^{m+1} \|_\infty +  \| \nabla_h \tilde{n}^m \|_2 \cdot \| \nabla_h  {\cal V}_n^{m+1} \|_\infty  )  
	\nonumber  
	\\
	&\le& C C^\star (  \| \tilde{n}^m \|_2    
  +  \| \nabla_h  \tilde{n}^m \|_2  )
	\nonumber
	\\
&\le& C  ( \dt^\frac74 + h^\frac74 ) ,   
	\label{convergence-rough-10-4} 
	\end{eqnarray}  
in which the regularity assumption~\eqref{assumption:W2-infty bound} has been recalled in the second step, while an inverse inequality $\| \nabla_h f \|_2 \le \frac{C \| f \|_2}{h}$ has been applied in the last step. Therefore, a combination of~\eqref{convergence-rough-10-2}, \eqref{convergence-rough-10-4} and \eqref{convergence-rough-10-1} results in   
\begin{eqnarray} 
  \| \tilde{n}^{m+1} - \tilde{n}^m \|_2  	
  \le C  ( \dt^\frac74 + h^\frac74 ) + C  ( \dt^\frac{11}{4} + h^\frac{11}{4} ) 
  + C ( \dt^4 + \dt h^4) 
  \le C  ( \dt^\frac74 + h^\frac74 ) .  \label{convergence-rough-10-5} 
\end{eqnarray}  
A similar estimate could be derived for $\| \tilde{p}^{m+1} - \tilde{p}^m \|_2$: 
\begin{eqnarray} 
  \| \tilde{p}^{m+1} - \tilde{p}^m \|_2  	
   \le C  ( \dt^\frac74 + h^\frac74 ) .  \label{convergence-rough-10-6} 
\end{eqnarray}    
As a consequence, a combination with the a-priori error bound~\eqref{a priori-1} (at the previous time step) results in a rough error estimate for $\tilde{n}^{m+1}$, $\tilde{p}^{m+1}$: 
\begin{eqnarray} 
  \| \tilde{n}^{m+1} \|_2 + \| \tilde{p}^{m+1} \|_2  
  \le  \| \tilde{n}^m \|_2 + \| \tilde{p}^m \|_2    
  +  \| \tilde{n}^{m+1} - \tilde{n}^m \|_2    
  + \| \tilde{p}^{m+1} - \tilde{p}^m \|_2  	  
  \le \hat {C} ( \dt^\frac74 + h^\frac74 ) ,   \label{convergence-rough-11-1} 
\end{eqnarray}    
under the linear refinement requirement $C_1 h \le \dt \le C_2 h$, with $\hat{C}$ dependent on the physical parameters. Subsequently, an application of 3-D inverse inequality implies that 
	\begin{equation} 
\| \tilde{n}^{m+1} \|_\infty + \| \tilde{p}^{m+1} \|_\infty \le \frac{C ( \| \tilde{n}^{m+1} \|_2 + \| \tilde{p}^{m+1} \|_2 ) }{h^\frac32} \le \hat {C}_1 ( \dt^\frac14 + h^\frac14 )  ,
	\label{convergence-rough-11-2} 
	\end{equation}  
where $\hat{C}_1 := C \hat{C}$, under the same linear refinement requirement. Because of the accuracy order, we could take $\dt$ and $h$ sufficient small so that 
	\begin{equation} 
\hat {C}_1 ( \dt^\frac14 + h^\frac14 ) \le \frac{\epsilon_0^\star}{2} ,
	\end{equation}
so that
	\begin{equation}
\| \tilde{n}^{m+1} \|_\infty + \| \tilde{p}^{m+1} \|_\infty \le \frac{\epsilon_0^\star}{2}  .
	\label{convergence-rough-11-3}
	\end{equation}
Its combination with~\eqref{assumption:separation-2}, the separation property for the constructed approximate solution, leads to a similar property for the numerical solution at time step $t^{m+1}$:  
\begin{equation} 
  \frac{\epsilon_0^\star}{2} \le n^{m+1} 
  \le C^\star + \frac{\epsilon_0^\star}{2} \le \tilde{C}_3 \quad \mbox{and} \quad 
  \frac{\epsilon_0^\star}{2}  \le p^{m+1} 
  \le C^\star + \frac{\epsilon_0^\star}{2} \le \tilde{C}_3  .  
    \label{assumption:separation-4}
	\end{equation}  
Such a uniform $\| \cdot \|_\infty$ bound will play a very important role in the refined error estimate.

\begin{rem} 
In the rough error estimate~\eqref{convergence-rough-11-1}, we see that the accuracy order is lower than the one given by the a-priori-assumption~\eqref{a priori-1}. 
Therefore, such a rough estimate could not be used for a global induction analysis. Instead, the purpose of such an estimate is to establish a uniform $\| \cdot \|_\infty$ bound, via the technique of inverse inequality, so that a discrete separation property becomes available for the numerical solution, as well as its maximum values. With such a property established for the numerical solution, the refined error analysis will yield much sharper estimates. 
\end{rem}

\subsection{A refined error estimate} 

Before proceeding into the refined error estimate, the following two preliminary results are needed. 

\begin{lem} \label{prelim est-1}
Under the a-priori $\| \cdot \|_\infty$ estimate~\eqref{a priori-5}, \eqref{assumption:separation-3} for the numerical solution at the previous time step and the rough $\| \cdot \|_\infty$ estimate~\eqref{assumption:separation-4} for the one at the next time step, we have 
\begin{eqnarray}   
  \langle   {\cal A} ( n^m ) \nabla_h (  \ln \check{\mathsf N}^{m+1} - \ln n^{m+1} ) ,   
  \nabla_h \tilde{n}^{m+1} \rangle  
  &\ge& \gamma_n^{(0)}  \|  \nabla_h \tilde{n}^{m+1} \|_2^2  
  -  M_n^{(0)} \|  \tilde{n}^{m+1} \|_2^2   \nonumber 
\\
  && 
  - M_n^{(1)} h^8 ,   \label{nonlinear est-0-1} 
\\ 
   D  \langle  {\cal A} ( p^m ) \nabla_h (  \ln \check{\mathsf P}^{m+1} - \ln p^{m+1} ) ,   
  \nabla_h \tilde{p}^{m+1} \rangle  
  &\ge& \gamma_p^{(0)}  \|  \nabla_h \tilde{p}^{m+1} \|_2^2  
  -  M_p^{(0)} \|  \tilde{p}^{m+1} \|_2^2  \nonumber 
\\
  && 
  - M_p^{(1)} h^8 , \label{nonlinear est-0-2}   
	\end{eqnarray} 
where the constants $\gamma_n^{(0)}$, $\gamma_p^{(0)}$, $M_n^{(0)}$, $M_p^{(0)}$, $M_n^{(1)}$, $M_p^{(1)}$ only depend on $\epsilon_0^\star$, $C^\star$, $\tilde{C}_3$, $D$ and $| \Omega |$. 
\end{lem}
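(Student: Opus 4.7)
The plan is to prove \eqref{nonlinear est-0-1} and \eqref{nonlinear est-0-2} by the same argument up to the mobility factor $D$. The goal is to extract a positive definite quadratic form in $\nabla_h \tilde{n}^{m+1}$ from the discrete gradient of the logarithmic difference and then absorb the remaining cross terms.

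First, I would invoke the pointwise integral mean-value identity
\[
\ln \check{\mathsf N}^{m+1} - \ln n^{m+1} = \chi \, \tilde{n}^{m+1}, \qquad
\chi := \int_0^1 \frac{dt}{t\check{\mathsf N}^{m+1} + (1-t) n^{m+1}},
\]
valid at every cell center by virtue of the separation estimates \eqref{assumption:separation-2} and \eqref{assumption:separation-4}. These bounds, combined with the $\ell^\infty$ bounds on both profiles, give $\tilde{C}_3^{-1} \le \chi \le 2/\epsilon_0^\star$ pointwise, and the same bounds propagate to the face-centered averages $A_x\chi, A_y\chi, A_z\chi$. Applying the exact discrete product rule $D_\alpha(fg) = (A_\alpha f)(D_\alpha g) + (A_\alpha g)(D_\alpha f)$ for $\alpha\in\{x,y,z\}$ yields
\[
D_\alpha(\ln\check{\mathsf N}^{m+1} - \ln n^{m+1}) = (A_\alpha \chi)(D_\alpha \tilde{n}^{m+1}) + (A_\alpha \tilde{n}^{m+1})(D_\alpha \chi).
\]
Substituted into the left-hand side of \eqref{nonlinear est-0-1} and multiplied against ${\cal A}(n^m)D_\alpha \tilde{n}^{m+1}$, the first product is pointwise bounded below by $(\epsilon_0^\star/2)(1/\tilde{C}_3)$ times $|D_\alpha\tilde{n}^{m+1}|^2$, producing the leading coercive contribution with $\gamma_n^{(0)} := \epsilon_0^\star/(2\tilde{C}_3)$.

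The main work is to control the cross term involving $D_\alpha\chi$. Direct differentiation of the integral gives a denominator pointwise bounded below by $(\epsilon_0^\star/2)^2$ and a numerator proportional to $t\, D_\alpha\check{\mathsf N}^{m+1} + (1-t)\, D_\alpha n^{m+1}$. The smooth piece $D_\alpha\check{\mathsf N}^{m+1}$ is uniformly bounded by $C^\star$ via \eqref{assumption:W1-infty bound}, and its contribution to the cross term is at most $C\|\tilde{n}^{m+1}\|_2\|\nabla_h \tilde{n}^{m+1}\|_2$; Young's inequality splits this into $M_n^{(0)}\|\tilde{n}^{m+1}\|_2^2 + \tfrac14\gamma_n^{(0)}\|\nabla_h\tilde{n}^{m+1}\|_2^2$, with the latter absorbed. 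For the unknown piece I would write $D_\alpha n^{m+1} = D_\alpha\check{\mathsf N}^{m+1} - D_\alpha\tilde{n}^{m+1}$; the surviving $D_\alpha\tilde{n}^{m+1}$ contribution is bounded by $C\|A_\alpha\tilde{n}^{m+1}\|_\infty\|\nabla_h\tilde{n}^{m+1}\|_2^2$, and by the rough $\ell^\infty$ smallness \eqref{convergence-rough-11-3} (namely $\|\tilde{n}^{m+1}\|_\infty \le \epsilon_0^\star/2$) this prefactor is small enough to be absorbed into $\gamma_n^{(0)}\|\nabla_h\tilde{n}^{m+1}\|_2^2$ with a fresh constant.

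The $O(h^8)$ remainder appears when I compare the discrete face-centered identities above with their continuous counterparts: substituting a face-averaged smooth quantity by its pointwise value at the cell center introduces an $O(h^4)$ discrepancy thanks to the fourth-order spatial consistency inherited from the constructed profile $\check{\mathsf N}^{m+1}$, and a Cauchy--Schwarz estimate against $\nabla_h\tilde{n}^{m+1}$ followed by Young's inequality yields a tail contribution bounded by $M_n^{(1)}h^8 + (\text{small})\|\nabla_h\tilde{n}^{m+1}\|_2^2$, with the latter absorbed into the leading coercive term. The analogous inequality \eqref{nonlinear est-0-2} follows verbatim, carrying the extra mobility factor $D$ through. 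The principal obstacle is the cross-term step: its closure critically requires both the strict separation of $n^{m+1}$ from zero and the smallness of $\|\tilde{n}^{m+1}\|_\infty$, both of which were secured in the preceding rough estimate and whose simultaneous availability is exactly what justifies the higher-order asymptotic expansion performed earlier.
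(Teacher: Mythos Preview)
Your route through the integral mean-value identity $\ln\check{\mathsf N}^{m+1}-\ln n^{m+1}=\chi\,\tilde{n}^{m+1}$ and the exact discrete product rule is genuinely different from the paper's. The paper instead applies the mean value theorem \emph{separately} to $\ln\check{\mathsf N}$ and $\ln n$ across each edge, writing $D_x(\ln\check{\mathsf N}-\ln n)=(\xi_{\mathsf N}^{-1}-\xi_n^{-1})D_x\check{\mathsf N}+\xi_n^{-1}D_x\tilde{n}$, and then Taylor-expands $\xi_{\mathsf N}^{-1}$ and $\xi_n^{-1}$ (via $\ln(1+t)$ to fifth order) to obtain $|\xi_{\mathsf N}^{-1}-\xi_n^{-1}|\le Q^{(0)}(|\tilde{n}_i|+|\tilde{n}_{i+1}|)+Q^\star h^4$; the $h^8$ in the statement is the square, after Young's inequality, of this Taylor tail. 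Your approach is algebraically simpler and, carried through correctly, would yield the inequality with $M_n^{(1)}=0$, which is stronger.

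There are two concrete problems with your argument, though. First, the absorption you claim for the $(A_\alpha\tilde{n}^{m+1})(D_\alpha\chi)(D_\alpha\tilde{n}^{m+1})$ piece does not close with the constant you cite. After bounding $|D_\alpha\chi|\le 4(\epsilon_0^\star)^{-2}(C^\star+|D_\alpha\tilde{n}^{m+1}|)$ and $|{\cal A}(n^m)|\le\tilde{C}_3$, the dangerous contribution is $4\tilde{C}_3(\epsilon_0^\star)^{-2}\|\tilde{n}^{m+1}\|_\infty\|\nabla_h\tilde{n}^{m+1}\|_2^2$, which must be absorbed into a coercive term of size $\tfrac{\epsilon_0^\star}{2\tilde{C}_3}\|\nabla_h\tilde{n}^{m+1}\|_2^2$. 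This requires $\|\tilde{n}^{m+1}\|_\infty\lesssim(\epsilon_0^\star)^3/\tilde{C}_3^2$, and the bound $\|\tilde{n}^{m+1}\|_\infty\le\epsilon_0^\star/2$ from \eqref{convergence-rough-11-3} is not small enough once $\tilde{C}_3>\epsilon_0^\star$, which is the generic case since $\tilde{C}_3=C^\star+1$. You should instead invoke the sharper bound \eqref{convergence-rough-11-2}, $\|\tilde{n}^{m+1}\|_\infty\le\hat{C}_1(\dt^{1/4}+h^{1/4})$, and take $\dt,h$ sufficiently small. Second, your paragraph on the $h^8$ remainder is a non-sequitur: the discrete product rule $D_\alpha(fg)=(A_\alpha f)(D_\alpha g)+(A_\alpha g)(D_\alpha f)$ is an exact algebraic identity, so there is no ``continuous counterpart'' to compare against and no $O(h^4)$ discrepancy enters. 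In your argument no $h^8$ arises; just drop that paragraph and record $M_n^{(1)}=0$.
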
 

\begin{proof} 
Looking at a single mesh cell $(i,j,k) \to (i+1,j,k)$, we make the following observation 
\begin{eqnarray}  
  D_x (  \ln \check{\mathsf N}^{m+1} - \ln n^{m+1} )_{i+\hf,j,k} 
  &=& \frac{1}{h} ( \ln {\mathsf N}_{i+1,j,k}^{m+1} - \ln {\mathsf N}_{i,j,k}^{m+1} ) 
  -  \frac{1}{h} ( \ln n_{i+1,j,k}^{m+1} - \ln n_{i,j,k}^{m+1} )  \nonumber 
\\
  &=& \frac{1}{\xi_{\mathsf N}} D_x {\mathsf N}_{i+\hf, j,k}^{m+1} 
  - \frac{1}{\xi_n} D_x n_{i+\hf, j,k}^{m+1}    \nonumber 
\\
  &=& \left( \frac{1}{\xi_{\mathsf N}} -  \frac{1}{\xi_n} \right) D_x {\mathsf N}_{i+\hf, j,k}^{m+1} 
  + \frac{1}{\xi_n} D_x \tilde{n}_{i+\hf, j,k}^{m+1}  ,  
  \label{nonlinear est-1-1}   
\end{eqnarray}   
in which the mean value theorem has been repeatedly applied, where
	\begin{equation} 
  \mbox{$\xi_{\mathsf N}$ is between ${\mathsf N}_{i+1,j,k}^{m+1}$ and ${\mathsf N}_{i,j,k}^{m+1}$ \quad   and  \quad  $\xi_n$ is between $n_{i+1,j,k}^{m+1}$ and $n_{i,j,k}^{m+1}$} .  
  \label{nonlinear est-1-2}  
	\end{equation} 
In turn, its product with $D_x \tilde{n}_{i+\hf,j,k}$ leads to 
	\begin{align} 
 D_x \tilde{n}_{i+\hf,j,k} \cdot D_x (  \ln \check{\mathsf N}^{m+1} &- \ln n^{m+1} )_{i+\hf,j,k}
	\nonumber 
	\\
& = \left( \frac{1}{\xi_{\mathsf N}} -  \frac{1}{\xi_n} \right) D_x {\mathsf N}_{i+\hf, j,k}^{m+1}  \cdot D_x \tilde{n}_{i+\hf,j,k} + \frac{1}{\xi_n} | D_x \tilde{n}_{i+\hf, j,k}^{m+1} |^2 .  
	\label{nonlinear est-2}   
	\end{align}     
For the second part, the rough $\| \cdot \|_\infty$ estimate~\eqref{assumption:separation-4} for $n^{m+1}$ implies that $0 < \xi_n \le \tilde{C}_3$, which in turn gives 
	\begin{equation}  
\frac{1}{\xi_n} \ge \frac{1}{\tilde{C}_3} \quad \mbox{and} \quad \frac{1}{\xi_n} | D_x \tilde{n}_{i+\hf, j,k}^{m+1} |^2 \ge  \frac{1}{\tilde{C}_3}  | D_x \tilde{n}_{i+\hf, j,k}^{m+1} |^2 .     
	\label{nonlinear est-3}   
	\end{equation} 
For the first term on the right hand side of~\eqref{nonlinear est-2}, we begin with the following identity: 
\begin{eqnarray} 
  \frac{1}{\xi_{\mathsf N}} 
  = \frac{ \ln {\mathsf N}_{i+1,j,k}^{m+1} - \ln {\mathsf N}_{i,j,k}^{m+1} }{ 
  {\mathsf N}_{i+1,j,k}^{m+1} -  {\mathsf N}_{i,j,k}^{m+1} }  
  = \frac{ \ln \Big( 1 + \frac{ {\mathsf N}_{i+1,j,k}^{m+1} - {\mathsf N}_{i,j,k}^{m+1}  }{ 
  {\mathsf N}_{i,j,k}^{m+1} } \Big) }{ {\mathsf N}_{i+1,j,k}^{m+1} -  {\mathsf N}_{i,j,k}^{m+1} }   . 
  \label{nonlinear est-4-1}     
\end{eqnarray} 
By setting $t_N^{(0)} = \frac{ {\mathsf N}_{i+1,j,k}^{m+1} - {\mathsf N}_{i,j,k}^{m+1}  }{ 
  {\mathsf N}_{i,j,k}^{m+1} }$, the following Taylor expansion is available:  
\begin{eqnarray} 
  \ln ( 1 + t_N^{(0)} ) = t_N^{(0)} - \frac12 ( t_N^{(0)} )^2 + \frac13 ( t_N^{(0)} )^3   
  - \frac14 ( t_N^{(0)} )^4 + \frac{1}{5 ( 1 + \eta_N)^5} ( t_N^{(0)} )^5 ,  
     \label{nonlinear est-4-2}     
\end{eqnarray} 
with $\eta_N$ between 0 and $t_N^{(0)}$. Its substitution into~\eqref{nonlinear est-4-1} yields
\begin{eqnarray} 
  \frac{1}{\xi_{\mathsf N}} 
  &=& \frac{ 1 }{ {\mathsf N}_{i,j,k}^{m+1} }
 - \frac{ {\mathsf N}_{i+1,j,k}^{m+1} - {\mathsf N}_{i,j,k}^{m+1}  }{ 
  2 ( {\mathsf N}_{i,j,k}^{m+1} )^2 } 
  + \frac{ ( {\mathsf N}_{i+1,j,k}^{m+1} - {\mathsf N}_{i,j,k}^{m+1} )^2 }{ 
  3 ( {\mathsf N}_{i,j,k}^{m+1} )^3 }
  - \frac{ ( {\mathsf N}_{i+1,j,k}^{m+1} - {\mathsf N}_{i,j,k}^{m+1} )^3 }{ 
  4 ( {\mathsf N}_{i,j,k}^{m+1} )^4 }   \nonumber 
\\
  && 
  + \frac{1}{5 ( 1 + \eta_N)^5} 
  \frac{ ( {\mathsf N}_{i+1,j,k}^{m+1} - {\mathsf N}_{i,j,k}^{m+1} )^4 }{ 
    ( {\mathsf N}_{i,j,k}^{m+1} )^5 }   .     
  \label{nonlinear est-4-3}     
\end{eqnarray} 
A similar equality could be derived for $\frac{1}{\xi_n}$: 
\begin{eqnarray} 
  \frac{1}{\xi_n} 
  &=& \frac{ 1 }{ n_{i,j,k}^{m+1} }
 - \frac{ n_{i+1,j,k}^{m+1} - n_{i,j,k}^{m+1}  }{ 
  2 ( n_{i,j,k}^{m+1} )^2 } 
  + \frac{ ( n_{i+1,j,k}^{m+1} - n_{i,j,k}^{m+1} )^2 }{ 
  3 ( n_{i,j,k}^{m+1} )^3 }
  - \frac{ ( n_{i+1,j,k}^{m+1} - n_{i,j,k}^{m+1} )^3 }{ 
  4 ( n_{i,j,k}^{m+1} )^4 }   \nonumber 
\\
  && 
  + \frac{1}{5 ( 1 + \eta_n)^5} \frac{ ( n_{i+1,j,k}^{m+1} - n_{i,j,k}^{m+1} )^4 }{ 
    ( n_{i,j,k}^{m+1} )^5 }  ,      
  \label{nonlinear est-4-3}     
\end{eqnarray} 
with $\eta_n$ between 0 and $t_n^{(0)} = \frac{ n_{i+1,j,k}^{m+1} - n_{i,j,k}^{m+1}  }{ 
  n_{i,j,k}^{m+1} }$. In addition, the following estimates are derived: 
	\begin{equation} 
\Big| \frac{ 1 }{ {\mathsf N}_{i,j,k}^{m+1} } -  \frac{ 1 }{ n_{i,j,k}^{m+1} }  \Big| 
     =  \Big| \frac{ \tilde{n}_{i,j,k}^{m+1} }{ {\mathsf N}_{i,j,k}^{m+1} n_{i,j,k}^{m+1} }  \Big| 
     \le \frac{2}{( \epsilon_0^\star)^2}  | \tilde{n}_{i,j,k}^{m+1} | ,  
     \label{nonlinear est-5-1} 
     \end{equation}
and
	\begin{align}
\Big| \frac{ {\mathsf N}_{i+1,j,k}^{m+1} - {\mathsf N}_{i,j,k}^{m+1}  }{ 
  ( {\mathsf N}_{i,j,k}^{m+1} )^2 }   
  -  \frac{ n_{i+1,j,k}^{m+1} - n_{i,j,k}^{m+1}  }{ 
   ( n_{i,j,k}^{m+1} )^2 }  \Big| &\le
      \Big| \frac{ \tilde{n}_{i+1,j,k}^{m+1} - \tilde{n}_{i,j,k}^{m+1}  }{ 
  ( n_{i,j,k}^{m+1} )^2 }  \Big|
  	\nonumber
  	\\
& \quad +  \Big| \frac{ ( {\mathsf N}_{i+1,j,k}^{m+1} - {\mathsf N}_{i,j,k}^{m+1} ) 
   ( n_{i+1,j,k}^{m+1} - {\mathsf N}_{i,j,k}^{m+1} )  \tilde{n}_{i,j,k}^{m+1} }{ 
  ( {\mathsf N}_{i,j,k}^{m+1} )^2 ( n_{i,j,k}^{m+1} )^2}  \Big|   
	\nonumber 
	\\
&\le \frac{4}{( \epsilon_0^\star)^2} ( | \tilde{n}_{i,j,k}^{m+1} |  + | \tilde{n}_{i+1,j,k}^{m+1} |  ) 
	\nonumber
	\\
& \quad +  \frac{ 2 C^\star (C^\star + \tilde{C}_3 )  }{\frac14 ( \epsilon_0^\star )^4 } | \tilde{n}_{i,j,k}^{m+1} |     
	\nonumber
	\\
& \le Q^{(2)} ( | \tilde{n}_{i,j,k}^{m+1} |  + | \tilde{n}_{i+1,j,k}^{m+1} |  )  , 
  \label{nonlinear est-5-2}   
\end{align}         
where
	 \[
Q^{(2)} := \frac{4}{( \epsilon_0^\star)^2} +  \frac{ 8 C^\star (C^\star + \tilde{C}_3 )  }{( \epsilon_0^\star )^4 },
	\]
and the rough $\| \cdot \|_\infty$ estimate~\eqref{assumption:separation-4}, the regularity assumption~\eqref{assumption:W1-infty bound}, and the separation property~\eqref{assumption:separation-2} have been extensively applied. The two other difference terms could be similarly analyzed: 
\begin{eqnarray} 
  &&
   \Big| \frac{ ( {\mathsf N}_{i+1,j,k}^{m+1} - {\mathsf N}_{i,j,k}^{m+1} )^2 }{ 
   ( {\mathsf N}_{i,j,k}^{m+1} )^3 } 
   -  \frac{ ( n_{i+1,j,k}^{m+1} - n_{i,j,k}^{m+1} )^2 }{ 
   ( n_{i,j,k}^{m+1} )^3 } \Big| 
  \le Q^{(3)} ( | \tilde{n}_{i,j,k}^{m+1} |  + | \tilde{n}_{i+1,j,k}^{m+1} |  )  , 
  \label{nonlinear est-5-3}        
\\
  &&   
   \Big| \frac{ ( {\mathsf N}_{i+1,j,k}^{m+1} - {\mathsf N}_{i,j,k}^{m+1} )^3 }{ 
  ( {\mathsf N}_{i,j,k}^{m+1} )^4 }   \nonumber 
  - \frac{ ( n_{i+1,j,k}^{m+1} - n_{i,j,k}^{m+1} )^3 }{ 
  ( n_{i,j,k}^{m+1} )^4 }   \Big| 
  \le Q^{(4)} ( | \tilde{n}_{i,j,k}^{m+1} |  + | \tilde{n}_{i+1,j,k}^{m+1} |  )  , 
  \label{nonlinear est-5-4}      
	\end{eqnarray} 
where $Q^{(3)}$, $Q^{(4)}$ only depend on $\epsilon_0^\star$, $C^\star$ and $\tilde{C}_3$. For the remainder terms, we observe that 
	\begin{align} 
| {\mathsf N}_{i+1,j,k}^{m+1}  - {\mathsf N}_{i,j,k}^{m+1} | & = h | D_x  {\mathsf N}_{i+\hf,j,k}^{m+1} | \le h \| D_x  {\mathsf N}^{m+1} \|_\infty \le C^\star h ,
	\label{nonlinear est-5-5}  
	\\
| t_N^{(0)} | & = \Big| \frac{ {\mathsf N}_{i+1,j,k}^{m+1} - {\mathsf N}_{i,j,k}^{m+1}  }{ 
  {\mathsf N}_{i,j,k}^{m+1} }   \Big|  
  \le C^\star ( \epsilon_0^\star )^{-1} h  \le Q^{(5)} h  \le \frac12,
	\label{nonlinear est-5-5} 
	\end{align}
where $Q^{(5)} =  C^\star ( \epsilon_0^\star )^{-1}$ and where we have used $\epsilon_0^\star \le {\mathsf N}_{i,j,k}^{m+1}$. Furthermore $| \eta_N | \le \frac12$,  so that
	\begin{equation}
| 1 + \eta_N | \ge \frac12 \quad  \mbox{and}  \quad \Big| \frac{1}{5 ( 1 + \eta_N)^5}  \Big| \le \frac{32}{5} . 
  \label{nonlinear est-5-5}     
	\end{equation}
Finally,
	\begin{equation}    
| {\cal R}_1 | = \Big| \frac{1}{5 ( 1 + \eta_N)^5} 
  \frac{ ( {\mathsf N}_{i+1,j,k}^{m+1} - {\mathsf N}_{i,j,k}^{m+1} )^4 }{ 
    ( {\mathsf N}_{i,j,k}^{m+1} )^5 }   \Big|   
    \le \frac{32}{5} \cdot \frac{ (C^\star h)^4 }{ ( \epsilon_0^\star )^5}  
    \le Q^{(6)} h^4 ,   
    \label{nonlinear est-5-6}        
	\end{equation} 
with $Q^{(6)} =  \frac{32 (C^\star)^4}{5 ( \epsilon_0^\star )^5}$. The other remainder term has a similar bound
	\begin{equation} 
     | {\cal R}_2 | = \Big| \frac{1}{5 ( 1 + \eta_n)^5} 
  \frac{ ( n_{i+1,j,k}^{m+1} - n_{i,j,k}^{m+1} )^4 }{ 
    ( n_{i,j,k}^{m+1} )^5 }   \Big|   
    \le \frac{32}{5} \cdot \frac{ (\tilde{C}_3 h)^4 }{ \frac{1}{32} ( \epsilon_0^\star )^5}  
    \le Q^{(7)} h^4 ,   \label{nonlinear est-5-7}        
	\end{equation} 
with $Q^{(7)} =  \frac{1024 \tilde{C}_3^4}{5 ( \epsilon_0^\star )^5}$. Consequently, a combination of~\eqref{nonlinear est-5-1} -- \eqref{nonlinear est-5-4}, \eqref{nonlinear est-5-6} and \eqref{nonlinear est-5-7} indicates that 
\begin{eqnarray} 
  \Big|   \frac{1}{\xi_{\mathsf N}} -  \frac{1}{\xi_n} \Big|   
  \le Q^{(0)}   ( | \tilde{n}_{i,j,k}^{m+1} |  + | \tilde{n}_{i+1,j,k}^{m+1} |  ) 
 +  Q^\star h^4 ,   \label{nonlinear est-5-8}        
\end{eqnarray} 
with 
	\[
Q^{(0)} = \frac{2}{( \epsilon_0^\star)^2} + \frac12 Q^{(2)} + \frac13 Q^{(3)} + \frac14 Q^{(4)} \quad \mbox{and} \quad Q^\star = Q^{(6)} + Q^{(7)}.
	\]
Then we arrive at an estimate for the first part on the right hand side of~\eqref{nonlinear est-2}: 
	\begin{eqnarray} 
&& \hspace{-0.5in} \left( \frac{1}{\xi_{\mathsf N}} -  \frac{1}{\xi_n} \right) 
   D_x {\mathsf N}_{i+\hf, j,k}^{m+1}  \cdot D_x \tilde{n}_{i+\hf,j,k}    \nonumber 
\\
  &\ge& 
      - ( Q^{(0)}   ( | \tilde{n}_{i,j,k}^{m+1} |  + | \tilde{n}_{i+1,j,k}^{m+1} |  ) 
 +  Q^\star h^4 ) \cdot C^\star \cdot |  D_x \tilde{n}_{i+\hf,j,k} |     \nonumber 
\\
  &\ge& 
      - ( Q^{(0)}   ( | \tilde{n}_{i,j,k}^{m+1} |  + | \tilde{n}_{i+1,j,k}^{m+1} |  ) 
 +  Q^\star h^4 )^2 (C^\star)^2 \tilde{C}_3  
    - ( 4 \tilde{C}_3 )^{-1}  | D_x \tilde{n}_{i+\hf, j,k}^{m+1} |^2  . 
    \label{nonlinear est-6-1}   
\end{eqnarray}  
Subsequently, a combination of~\eqref{nonlinear est-2}, \eqref{nonlinear est-2} and \eqref{nonlinear est-6-1} results in 
	\begin{eqnarray} 
&& \hspace{-0.5in}
    D_x \tilde{n}_{i+\hf,j,k} 
    \cdot D_x (  \ln \check{\mathsf N}^{m+1} - \ln n^{m+1} )_{i+\hf,j,k}  \nonumber 
\\
  &\ge& 
  - ( Q^{(0)}   ( | \tilde{n}_{i,j,k}^{m+1} |  + | \tilde{n}_{i+1,j,k}^{m+1} |  ) 
 +  Q^\star h^4 )^2 (C^\star)^2 \tilde{C}_3   
  + \frac{3}{4 \tilde{C}_3}  | D_x \tilde{n}_{i+\hf, j,k}^{m+1} |^2  \nonumber 
\\
   &\ge& 
   \frac{3}{4 \tilde{C}_3}  | D_x \tilde{n}_{i+\hf, j,k}^{m+1} |^2      
  - 4 ( Q^{(0)} C^\star )^2 \tilde{C}_3  
  ( | \tilde{n}_{i,j,k}^{m+1} |^2  + | \tilde{n}_{i+1,j,k}^{m+1} |^2  ) 
 - 2  ( Q^\star C^\star)^2 \tilde{C}_3  h^8 .  
 \label{nonlinear est-6-2}   
\end{eqnarray}     
Notice that this inequality is valid at a point-wise level. With summation over space, and keeping in mind of the a-priori $\| \cdot \|_\infty$ estimate~\eqref{a priori-5}, \eqref{assumption:separation-3} for $n^m$, we obtain
	\begin{eqnarray}  
&& \hspace{-0.5in} \langle   {\cal A} ( n^m ) \nabla_h (  \ln \check{\mathsf N}^{m+1} - \ln n^{m+1} ) ,    \nabla_h \tilde{n}^{m+1} \rangle
	\nonumber 
	\\
  &\ge& 
     \frac{\epsilon_0^\star}{2} \cdot   
  \frac{3}{4 \tilde{C}_3}  \| \nabla_h \tilde{n}^{m+1} |_2^2  
  - 8 ( Q^{(0)} C^\star )^2 \tilde{C}_3^2    
   \| \tilde{n}^{m+1} \|_2^2  
 - 2  ( Q^\star C^\star)^2 \tilde{C}_3^2 | \Omega |  h^8  .  
 \end{eqnarray} 
 This proves the first nonlinear estimate~\eqref{nonlinear est-0-1}, by setting $\gamma_n^{(0)} = \frac{3 \epsilon_0^\star}{8 \tilde{C}_3}$, $M_n^{(0)} = 8 ( Q^{(0)} C^\star )^2 \tilde{C}_3^2$, and $M_n^{(1)} =  2  ( Q^\star C^\star)^2 \tilde{C}_3^2 | \Omega |$. The second nonlinear estimate~\eqref{nonlinear est-0-2} could be derived exactly in the same manner. The details are skipped for the sake of brevity. 
\end{proof} 

The next preliminary estimate is more straightforward. 

	\begin{lem}
	\label{prelim est-2}
For $\tilde{\phi}^k$ (for any $k \ge 0$) defined in~\eqref{error function-2},  we have the estimate
\begin{equation} 
  \| \nabla_h \tilde{\phi}^k \|_2 \le \tilde{C}_4 
  \| \tilde{n}^k - \tilde{p}^k \|_2   ,  
  \label{prelim est-2-0}  
\end{equation} 
for some constant $\tilde{C}_4>0$ that is independent of $h$.
	\end{lem}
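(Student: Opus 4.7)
The plan is to reduce the bound on $\|\nabla_h \tilde{\phi}^k\|_2$ to a discrete Poincar\'e inequality by using the variational characterization of the discrete $H^{-1}$-norm. First I would observe that, by construction, $\tilde{\phi}^k \in \mathring{\mathcal{C}}_{\rm per}$ solves
\begin{equation*}
-\Delta_h \tilde{\phi}^k = \tilde{p}^k - \tilde{n}^k,
\end{equation*}
where the right-hand side lies in $\mathring{\mathcal{C}}_{\rm per}$ because $\overline{\tilde{n}^k} = \overline{\tilde{p}^k} = 0$ (as noted in the paragraph following \eqref{error function-2}). In particular $\nrm{\,\cdot\,}_{-1,h}$ is well-defined on this function.

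Next I would take the discrete inner product of this equation with $\tilde{\phi}^k$ and apply the summation-by-parts identity from Lemma~\ref{lemma1} with $\mathcal{D} \equiv 1$ to obtain
\begin{equation*}
\nrm{\nabla_h \tilde{\phi}^k}_2^2 = \ciptwo{-\Delta_h \tilde{\phi}^k}{\tilde{\phi}^k} = \ciptwo{\tilde{p}^k - \tilde{n}^k}{(-\Delta_h)^{-1}(\tilde{p}^k - \tilde{n}^k)} = \nrm{\tilde{n}^k - \tilde{p}^k}_{-1,h}^2.
\end{equation*}
Thus it suffices to bound $\nrm{\tilde{n}^k - \tilde{p}^k}_{-1,h}$ by $\nrm{\tilde{n}^k - \tilde{p}^k}_2$ with a constant independent of $h$. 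Equivalently, applying Cauchy--Schwarz directly to $\nrm{\nabla_h \tilde{\phi}^k}_2^2 = \ciptwo{\tilde{p}^k - \tilde{n}^k}{\tilde{\phi}^k}$ gives
\begin{equation*}
\nrm{\nabla_h \tilde{\phi}^k}_2^2 \le \nrm{\tilde{p}^k - \tilde{n}^k}_2 \cdot \nrm{\tilde{\phi}^k}_2.
\end{equation*}

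The remaining step is the discrete Poincar\'e inequality: since $\tilde{\phi}^k \in \mathring{\mathcal{C}}_{\rm per}$ has zero mean, one has $\nrm{\tilde{\phi}^k}_2 \le C_P \nrm{\nabla_h \tilde{\phi}^k}_2$ for some constant $C_P > 0$ that depends only on $\Omega$ (this is a standard fact for periodic, mean-zero cell-centered grid functions; it is well documented in the references cited in the excerpt). Substituting and dividing both sides by $\nrm{\nabla_h \tilde{\phi}^k}_2$ (the inequality is trivial if this quantity vanishes) yields
\begin{equation*}
\nrm{\nabla_h \tilde{\phi}^k}_2 \le C_P \nrm{\tilde{n}^k - \tilde{p}^k}_2,
\end{equation*}
so we may take $\tilde{C}_4 := C_P$, which is independent of $h$. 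There is no real obstacle here; the only subtlety worth flagging is verifying that the mean-zero property of both $\tilde{n}^k - \tilde{p}^k$ and $\tilde{\phi}^k$ is genuinely available at every time step, which follows from the mass conservation properties \eqref{consistency-14-1}--\eqref{consistency-14-3} established earlier.
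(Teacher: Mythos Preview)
Your proof is correct and follows essentially the same approach as the paper: the paper's one-line proof simply cites the standard estimate $\|f\|_{-1,h} \le C\|f\|_2$ for mean-zero $f$, and what you have written (Cauchy--Schwarz followed by the discrete Poincar\'e inequality) is precisely the standard justification of that estimate applied to $f = \tilde{p}^k - \tilde{n}^k$, together with the identity $\|\nabla_h \tilde{\phi}^k\|_2 = \|\tilde{p}^k - \tilde{n}^k\|_{-1,h}$.
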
 

	\begin{proof} 
  Inequality~\eqref{prelim est-2-0} is a direct consequence of the standard estimate: $\| f \|_{-1,h} \le C \| f \|_2$, for any $f$ with $\overline{f} =0$. 
	\end{proof}

Now we proceed with the refined error estimate. Taking a discrete inner product with~\eqref{error equation-1}, \eqref{error equation-2} by $2 \tilde{n}^{m+1}$, $2 \tilde{p}^{m+1}$, respectively, leads to 
\begin{eqnarray} 
  &&
  \frac{1}{\dt} ( \| \tilde{n}^{m+1} \|_2^2 - \| \tilde{n}^m \|_2^2 
  +   \| \tilde{n}^{m+1}  - \tilde{n}^m \|_2^2   
  + \| \tilde{p}^{m+1} \|_2^2 - \| \tilde{p}^m \|_2^2 
  +   \| \tilde{p}^{m+1}  - \tilde{p}^m \|_2^2  )    \nonumber 
\\
  && 
  +  2 (   \langle {\cal A} ( n^m )  \nabla_h \tilde{\mu}_n^{m+1} ,  
      \nabla_h \tilde{n}^{m+1}  \rangle   
  + D \langle {\cal A} ( p^m )  \nabla_h \tilde{\mu}_p^{m+1} ,  
     \nabla_h \tilde{p}^{m+1}  \rangle  )    \nonumber 
\\ 
  &=&
  2  ( \langle \tau_n^{m+1} , \tilde{n}^{m+1} \rangle 
  + \langle \tau_p^{m+1} , \tilde{p}^{m+1} \rangle ) \nonumber 
\\
  && 
  - 2 (   \langle {\cal A} ( \tilde{n}^m )  \nabla_h {\cal V}_n^{m+1} ,  
     \nabla_h \tilde{n}^{m+1}  \rangle   
  + D \langle {\cal A} ( \tilde{p}^m )  \nabla_h {\cal V}_p^{m+1} ,  
    \nabla_h \tilde{p}^{m+1}  \rangle  )  ,  
  \label{convergence-1} 
\end{eqnarray}  
where summation-by-parts has been applied. For the local truncation error terms, similar estimates could be derived:  
\begin{eqnarray} 
  2 \langle \tau_n^{m+1} , \tilde{n}^{m+1} \rangle  
  \le  \| \tau_n^{m+1} \|_2^2 + \| \tilde{n}^{m+1} \|_2^2 , \, \, \, 
  2 \langle \tau_p^{m+1} , \tilde{p}^{m+1} \rangle  
  \le  \| \tau_p^{m+1} \|_2^2 + \| \tilde{p}^{m+1} \|_2^2 . \label{convergence-2} 
\end{eqnarray}  
For the nonlinear diffusion error inner product on the left hand side, we see that 
\begin{eqnarray} 
     \langle {\cal A} ( n^m )  \nabla_h \tilde{\mu}_n^{m+1} ,  
      \nabla_h \tilde{n}^{m+1}  \rangle   
      &=&     \langle   {\cal A} ( n^m ) \nabla_h (  \ln \check{\mathsf N}^{m+1} - \ln n^{m+1} ) ,   
  \nabla_h \tilde{n}^{m+1} \rangle   \nonumber 
\\
  && 
  +  \langle   {\cal A} ( n^m ) \nabla_h \tilde{\phi}^{m+1} ,   
  \nabla_h \tilde{n}^{m+1} \rangle .  \label{convergence-3-1} 
\end{eqnarray}    
The second part has the following lower bound 
	\begin{eqnarray} 
 \langle   {\cal A} ( n^m ) \nabla_h \tilde{\phi}^{m+1} ,   
  \nabla_h \tilde{n}^{m+1} \rangle 
  &\ge& - \tilde{C}_3 \| \nabla_h \tilde{\phi}^{m+1} \|_2 
  \cdot \|  \nabla_h \tilde{n}^{m+1} \|_2  \nonumber 
\\
  &\ge& - \tilde{C}_3 \tilde{C}_4 
   \| \tilde{n}^{m+1} - \tilde{p}^{m+1} \|_2 
  \cdot \|  \nabla_h \tilde{n}^{m+1} \|_2  \nonumber 
\\
  &\ge& -  (\tilde{C}_3 \tilde{C}_4 )^2
   \| \tilde{n}^{m+1} - \tilde{p}^{m+1} \|_2^2  - \frac14 \gamma^{(0)} \|  \nabla_h \tilde{n}^{m+1} \|_2,  \nonumber 
  \label{convergence-3-2}   
\end{eqnarray} 
in which the inequality~\eqref{prelim est-2-0} (in Lemma~\ref{prelim est-2}) has been applied in the second step. Its substitution into~\eqref{convergence-3-1},  combined with the preliminary estimate~\eqref{nonlinear est-0-1} (in Lemma~\ref{prelim est-1}), leads to 
	\begin{eqnarray} 
  &&  \hspace{-0.5in}
     \langle {\cal A} ( n^m )  \nabla_h \tilde{\mu}_n^{m+1} ,  
      \nabla_h \tilde{n}^{m+1}  \rangle    \nonumber 
 \\
     \hspace{-0.4in} &\ge&   
      \frac34 \gamma_n^{(0)}  \|  \nabla_h \tilde{n}^{m+1} \|_2^2  
  -  M_n^{(0)} \|  \tilde{n}^{m+1} \|_2^2   
  - M_n^{(1)} h^8       
     -  2 ( \tilde{C}_3 \tilde{C}_4 )^2
   (  \| \tilde{n}^{m+1} \|_2^2 + \| \tilde{p}^{m+1} \|_2^2 ) .  \label{convergence-3-3} 
\end{eqnarray}    
A similar lower bound could be derived for the other nonlinear error inner product on the left hand side; the details are skipped for the sake of brevity: 
\begin{eqnarray} 
   && \hspace{-0.5in}
   D \langle {\cal A} ( p^m )  \nabla_h \tilde{\mu}_p^{m+1} ,  
      \nabla_h \tilde{p}^{m+1}  \rangle    \nonumber 
 \\
     \hspace{-0.4in} &\ge&   
      \frac34 \gamma_p^{(0)}  \|  \nabla_h \tilde{p}^{m+1} \|_2^2  
  -  M_p^{(0)} \|  \tilde{p}^{m+1} \|_2^2   
  - M_p^{(1)} h^8       
     -  2 ( D \tilde{C}_3 \tilde{C}_4 )^2
   (  \| \tilde{n}^{m+1} \|_2^2 + \| \tilde{p}^{m+1} \|_2^2 ) .  \label{convergence-3-4} 
\end{eqnarray}    
For the last two nonlinear error inner product terms on the right hand side, a direct application of Cauchy inequality is applied: 
\begin{eqnarray} 
    - 2    \langle {\cal A} ( \tilde{n}^m )  \nabla_h {\cal V}_n^{m+1} ,  
     \nabla_h \tilde{n}^{m+1}  \rangle  
     &\le&   2   \| \nabla_h {\cal V}_n^{m+1} \|_\infty 
     \cdot  \| {\cal A} ( \tilde{n}^m )  \|_2  
     \cdot \| \nabla_h \tilde{n}^{m+1}  \|_2  \nonumber  
\\
  &\le&  
    2   C^\star \| \tilde{n}^m   \|_2  
     \cdot \| \nabla_h \tilde{n}^{m+1}  \|_2   \nonumber 
\\
   &\le&   2 (   C^\star )^2 (\gamma_n^{(0)} )^{-1} \| \tilde{n}^m   \|_2^2  
     + \frac12 \gamma_n^{(0)} \| \nabla_h \tilde{n}^{m+1}  \|_2^2  , 
      \label{convergence-4-1}  
\\
   - 2  D \langle {\cal A} ( \tilde{p}^m )  \nabla_h {\cal V}_p^{m+1} ,  
     \nabla_h \tilde{p}^{m+1}  \rangle  
    &\le&   2 ( D C^\star )^2 (\gamma_p^{(0)} )^{-1} \| \tilde{p}^m   \|_2^2  
     + \frac12 \gamma_p^{(0)} \| \nabla_h \tilde{p}^{m+1}  \|_2^2  ,  
      \label{convergence-4-2}               
\end{eqnarray}      
with the regularity assumption~\eqref{assumption:W2-infty bound} recalled.  
     
Finally, a substitution of~\eqref{convergence-2}, \eqref{convergence-3-3} -- \eqref{convergence-3-4} and \eqref{convergence-4-1} -- \eqref{convergence-4-2} into~\eqref{convergence-1} results in 
	\begin{eqnarray} 
&& \hspace{-0.5in} \frac{1}{\dt} ( \| \tilde{n}^{m+1} \|_2^2 - \| \tilde{n}^m \|_2^2   
  + \| \tilde{p}^{m+1} \|_2^2 - \| \tilde{p}^m \|_2^2  )    
  + \gamma_n^{(0)} \| \nabla_h \tilde{n}^{m+1}  \|_2^2  
  + \gamma_p^{(0)} \| \nabla_h \tilde{p}^{m+1}  \|_2^2   
	\nonumber 
	\\ 
&\le& 
  M^{(2)}  (  \| \tilde{n}^{m+1} \|_2^2 + \| \tilde{p}^{m+1} \|_2^2 ) 
  + M^{(3)}   (  \| \tilde{n}^m \|_2^2 + \| \tilde{p}^m \|_2^2 )  
  + M^{(4)} h^8 
	\nonumber
  	\\
&&  + \| \tau_n^{m+1} \|_2^2 + \| \tau_p^{m+1} \|_2^2  ,  
  \label{convergence-5} 
\end{eqnarray}  
where
	\begin{align}
M^{(2)} &= 4 ( \tilde{C}_3 \tilde{C}_4 )^2 ( 1 + D^2 ) + 2 (M_n^{(0)} + M_p^{(0)} ) +1,
	\\
M^{(3)} &= 2 (   C^\star )^2 (\gamma_n^{(0)} )^{-1} +  2 ( D C^\star )^2 (\gamma_p^{(0)} )^{-1},
	\\
M^{(4)} &= 2 ( M_n^{(1)} + M_p^{(1)} ).
	\end{align}
Therefore, an application of discrete Gronwall inequality leads to the desired higher order convergence estimate
	\begin{equation}
  \| \tilde{n}^{m+1} \|_2 + \| \tilde{p}^{m+1} \|_2 + \Bigl( \dt   \sum_{k=1}^{m+1} 
   ( \| \nabla_h \tilde{n}^k \|_2^2 
+ \| \nabla_h \tilde{p}^k \|_2^2 ) \Bigr)^{1/2}  \le C ( \dt^3 + h^4 ) , 
	\label{convergence-6}
	\end{equation}  
based on the higher order truncation error accuracy, $\| \tau_n^{m+1} \|_2$, $\| \tau_p^{m+1} \|_2 \le C (\dt^3 + h^4)$. This completes the refined error estimate. 

\noindent
{\bf Recovery of the a-priori assumption~\eqref{a priori-1}} 

With the higher order error estimate~\eqref{convergence-6} at hand, we notice that the a-priori assumption in~\eqref{a priori-1} is satisfied at the next time step $t^{m+1}$:  
	\begin{equation} 
\| \tilde{n}^{m+1} \|_2,  \| \tilde{p}^{m+1} \|_2 \le \hat{C}_2 ( \dt^3 + h^4 ) \le \dt^\frac{11}{4} + h^\frac{11}{4} ,  
	\label{a priori-9}  
	\end{equation} 
provided $\dt$ and $h$ are sufficiently small. Therefore, an induction analysis could be applied. This finishes the higher order convergence analysis. 

As a result, the convergence estimate~\eqref{convergence-0} for the variable $(n, p)$ is a direct consequence of~\eqref{convergence-6}, combined with the definition~\eqref{consistency-1} of the  constructed approximate solution $(\check{\mathsf N}, \check{\mathsf P})$, as well as the projection estimate~\eqref{projection-est-0}. 

In terms of the convergence estimate for the electric potential variable $\phi$, we recall the definition for $\tilde{\phi}^k$ in~\eqref{error function-2} and observe that 
	\begin{equation}
\| \tilde{\phi}^m \|_{H_h^2} \le C \| \Delta_h \tilde{\phi^m} \|_2 \le \frac{C}{\varepsilon} \| \tilde{n}^m - \tilde{p}^m \|_2 \le \hat{C}_3 (\dt^3 + h^4) ,  
	\label{convergence-7-1} 
	\end{equation}
where $\hat{C}_3 =C \hat{C}_2$. Then
	\begin{equation}
  \| \tilde{\phi}^m - e_\phi^m \|_{H_h^2} 
  \le C \|  \Delta_h (\tilde{\phi}^m - e_\phi^m) \|_2 
  \le \hat{C}_4 (\dt + h^2) ,
	\label{convergence-7-3}    
	\end{equation} 
and
	\begin{align}
(-\Delta_h) (\tilde{\phi}^m - e_\phi^m) & = {\cal P}_N ( \dt {\mathsf P}_{\dt, 1} + \dt^2 {\mathsf P}_{\dt, 2} + h^2 {\mathsf P}_{h, 1} 
	\nonumber
	\\
& \quad - \dt {\mathsf N}_{\dt, 1} - \dt^2 {\mathsf N}_{\dt, 2} 
  - h^2 {\mathsf N}_{h, 1} ) + \tau_\phi^m ,   \label{convergence-7-2}   
	\end{align}
where the discrete elliptic regularity has been applied in~\eqref{convergence-7-1}, \eqref{convergence-7-3}, and the truncation error for $\phi$ is defined as $\tau_\phi^m = (-\Delta_h) \Phi_N -   (\check{\mathsf P}^m - \check{\mathsf N}^m)$. 
	
Finally, we arrive at 
	\begin{equation} 
\| e_\phi^m \|_{H_h^2} \le \| \tilde{\phi}^m \|_{H_h^2} +  \| \tilde{\phi}^m - e_\phi^m \|_{H_h^2} \le  \hat{C}_3 (\dt^3 + h^4)  +   \hat{C}_4 (\dt + h^2)  \le ( \hat{C}_4 + 1) (\dt + h^2) . 
	\label{convergence-7-4}    
	\end{equation} 
This completes the proof of Theorem~\ref{thm:convergence}. 

	\section{Numerical results}
	\label{sec:numerical results} 
%
To get numerical solutions, we need to solve the fully nonlinear scheme~\eqref{scheme-PNP-1} -- \eqref{scheme-PNP-chem pot-p} at each time step. We propose an iterative method as follows. First, the initial value for the nonlinear iteration is taken as $n^{m+1,0}:= n^m$, $p^{m+1,0}:= p^m$, and $\phi^{m+1,0}:=\phi^m$. Subsequently, given the $k$-th iterate numerical solution $n^{m+1,k}$, $p^{m+1,k}$, $\phi^{m+1,k}$, we obtain the first stage of the $(k+1)$-th iterate by solving 
\begin{equation} \label{solver-1} 
	\begin{aligned} 
n^{m+1,*} - \dt \nabla_h \cdot \left(\breve{\cal M}_n^m \nabla_h \left(\frac{n^{m+1,*}}{n^{m+1,k}}\right) \right)  & = n^{m} + \dt \nabla_h \cdot \left( \breve{\cal M}_n^m \nabla_h \left( \ln n^{m+1,k} - \phi^{m+1, k}\right)  \right), 
	\\
p^{m+1,*} - \dt \nabla_h \cdot \left(\breve{\cal M}_p^m \nabla_h \left(\frac{p^{m+1,*}}{p^{m+1,k}}\right) \right)   & = p^{m} + \dt \nabla_h \cdot \left( \breve{\cal M}_p^m \nabla_h \left( \ln p^{m+1,k} + \phi^{m+1, k}\right)  \right), 
	\\
-\Delta_h \phi^{m+1,*} & = p^{m+1,*} -n^{m+1,*}.
	\end{aligned} 
\end{equation} 
In addition, to make the nonlinear iteration smoother,  we then obtain $n^{m+1,k+1}$, $p^{m+1,k+1}$, and $\phi^{m+1,k+1}$ by
\begin{eqnarray}  \label{solver-3} 
\begin{aligned} 
\left(n^{m+1,k+1}, p^{m+1,k+1}, \phi^{m+1,k+1} \right) = & 
\omega_r  \left(n^{m+1,k}, p^{m+1,k}, \phi^{m+1,k} \right)
\\
   & 
  + (1- \omega_r)  \left(n^{m+1,*}, p^{m+1,*}, \phi^{m+1,*} \right), 
\end{aligned} 
\end{eqnarray} 
where $\omega_r \in (0,1)$ is a relaxation parameter. We notice that, two linear systems for $n$ and $p$, associated with ${\cal M}$-matrices, need to be solved in the $(k+1)$-th iteration algorithm~\eqref{solver-1}. In fact, \eqref{solver-1} could be viewed as a linearized Newton iteration for the proposed numerical scheme~\eqref{scheme-PNP-1} -- \eqref{scheme-PNP-chem pot-p}, at least in the $\ln n$ and $\ln p$ nonlinear parts. It is expected that, under a sufficient condition on the time step size $\dt$, such a linearized iteration algorithm guarantees positive concentrations at a discrete level in each iteration stage, and an iteration convergence to the proposed numerical scheme~\eqref{scheme-PNP-1} -- \eqref{scheme-PNP-chem pot-p} is also available. The detailed analysis will be left in the future works. 

In the following, we demonstrate the performance of the proposed numerical scheme in a two dimensional setting. With rescaling, the computational domain becomes $\Omega=(-1,1)^2$. Also, we take the parameters $z_0=1$, $n_0=p_0=c_0=0.1$ M, $L=13.6$ \AA, and $D_n=D_p=D_0$ with $D_0$ being the diffusion constant of sodium ions in water.


\subsection{Accuracy tests}
To test accuracy, we consider the following exact solution
\begin{equation}\label{ExtSoln}
\left\{
\begin{aligned}
&n=e^{-t}\sin(2\pi x)\cos(2\pi y )+2,\\
&p=e^{-t}\cos(2\pi x)\sin(2\pi y)+2,\\
&\psi=e^{-t}\sin(2\pi x)\sin(2\pi y),\\
\end{aligned}
\right.
\end{equation}
to the PNP equations with source terms:
		\begin{eqnarray} 
\partial_t n &=&   \nabla\cdot\left( \nabla n - n \nabla \phi  \right) +f_n  ,   
	\label{RescaledPNP-1} 
	\\
\partial_t p &=&  \nabla\cdot\left( \nabla p +  p \nabla \phi  \right)  +f_p ,   
	\label{RescaledPNP-2}  
	\\
 -  \Delta \phi &=&  p-n +\rho^f .
 	\label{RescaledPNP-3} 
	\end{eqnarray}
Here the source terms $f_n$, $f_p$, and $\rho^f$, and the initial conditions are obtained with the known exact solution. 


\begin{table}
\begin{center}
\begin{tabular}{ccccccc}
\hline  \hline
 $h$ & $\ell^\infty$ error in $p$ & Order & $\ell^\infty$ error in $n$ & Order & $\ell^\infty$ error in $\psi$ & Order\\
 \hline
 0.1 & 1.898E-2 & - & 1.898E-2 & - & 1.200E-1 & -\\
 0.05 & 4.864E-3 & 1.96 & 4.864E-3 & 1.96 & 3.001E-2 & 2.00\\ 
 0.025 & 1.231E-3 & 1.98 & 1.231E-3 & 1.98 & 7.524E-3 & 2.00\\
 0.0125 & 3.093E-4 & 1.99 & 3.093E-4 & 1.99 &  1.882E-3 & 2.00\\
 \hline  \hline
\end{tabular}
\caption{The $\ell^\infty$ error and convergence order for the numerical solutions of $p$, $n$, and $\psi$ with $\Delta t=h^2$.}
\label{t:convergence}
\end{center}
\end{table}
To verify the accuracy of the proposed scheme \reff{scheme-PNP-1} to \reff{scheme-PNP-chem pot-p}, we perform numerical tests using various mesh resolution with $\Delta t=h^2$. Note that such a mesh ratio is chosen for the purpose of accuracy tests rather than the stability concern.  As shown in Table \ref{t:convergence},  the $\ell^{\infty}$ error for numerical solutions of $p, n$, and $\phi$ at time $T=0.1$ decreases robustly as the mesh refines. The convergence order, as expected, is about two for both the concentrations and electrostatic potential.
\subsection{Properties tests}
We also conduct numerical simulations to test the performance of the proposed scheme in preserving physical properties at discrete level. The numerical schemes are applied to solve the rescaled PNP equations~\reff{RescaledPNP-1} to~\reff{RescaledPNP-3} without sources terms in the Nernst--Planck equations, but with a fixed charge distribution given by
	\[
\rho^f(x,y)= e^{-100\left[(x+\frac{1}{2})^2+(y+\frac{1}{2})^2\right]} -e^{-100\left[(x+\frac{1}{2})^2+(y-\frac{1}{2})^2\right]}-e^{-100\left[(x-\frac{1}{2})^2+(y+\frac{1}{2})^2\right]} +e^{-100\left[(x-\frac{1}{2})^2+(y-\frac{1}{2})^2\right]}.
	\]
The initial data for concentrations are given by
	\[
p(0,x,y)=0.1 \quad \mbox{and}\quad  n(0,x,y)=0.1.
	\]
\begin{figure}[ht]
\centering
\includegraphics[scale=.44]{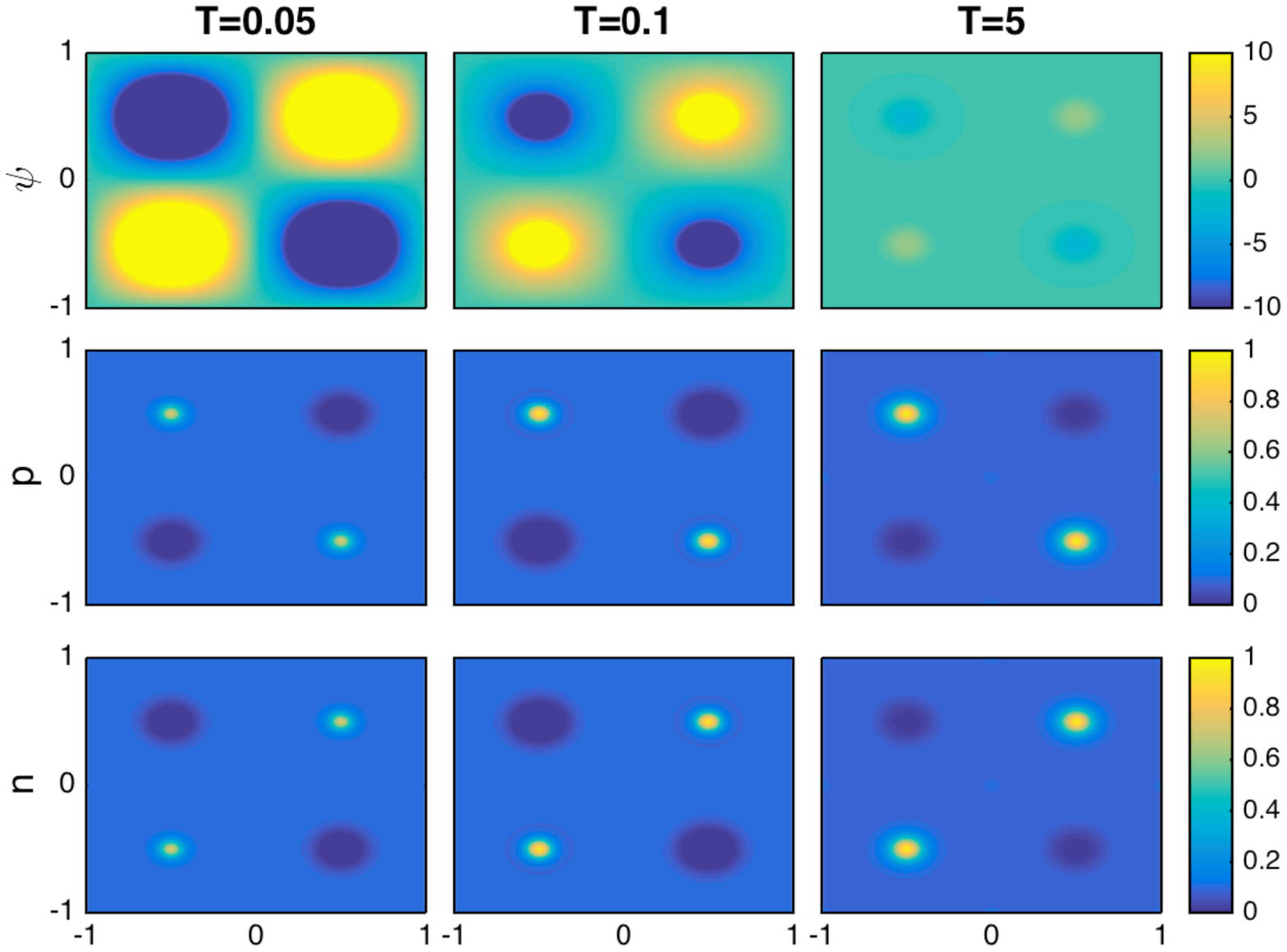}
\includegraphics[scale=.41]{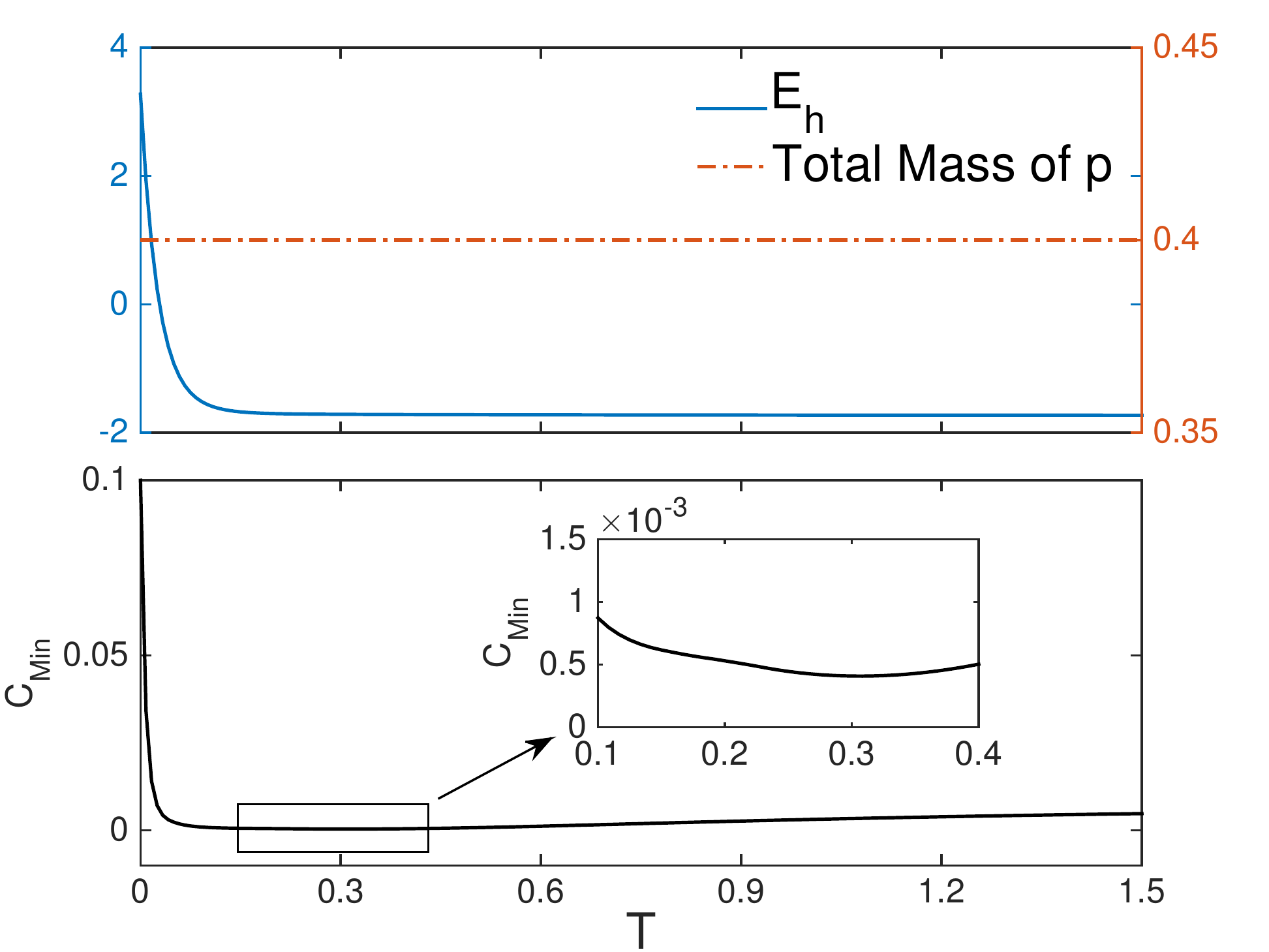}
\caption{Left: The snapshots of $\psi$, $p$, and $n$ at time $T=0.05$, $T=0.1$, and $T=5$. Right: The evolution of discrete energy $E_h$, total mass of $p$, and the minimum concentration $C_{\rm Min}$. }
\label{f:PsiNP}
\centering
\end{figure}


Figure~\ref{f:PsiNP} displays snapshots of the electrostatic potential and concentrations at time $T=0.05$, $T=0.1$, and $T=5$. One observes that the concentrations of cations and anions develop peaks and valleys due to electrostatic interactions, and that the electrostatic potential initially induced by the fixed charges gets screened quickly by attracted mobile ions carrying opposite charges, as time evolves.  At $T=5$, the system nearly reaches equilibrium.  

By periodic boundary conditions, the total mass of concentrations is conserved in time evolution. This is verified in the right panel of the  Fig.~\ref{f:PsiNP}, in which the total mass of the cations converses perfectly. In addition, the discrete energy $E_h$ decreases monotonically, being consistent with our analysis; cf.~Theorem~\ref{PNP-energy stability}. Of interest is the evolution of the minimum concentration that is defined by $C_{\rm Min}:={\rm Min}\{{\rm Min}_{i,j,k} n_{i,j,k}^m, {\rm Min}_{i,j,k} p_{i,j,k}^m \}$. The evolution of $C_{\rm Min}$, together with the inset plot, demonstrates that the numerical solution of concentration remain positive all the time.  In summary, our numerical tests further confirm that the proposed numerical scheme respects mass conservation, energy dissipation, and positivity at discrete level. 

\section{Concluding remarks}  \label{sec:conclusion} 
A finite difference numerical scheme is proposed and analyzed for the Poisson-Nernst-Planck (PNP) system. The Energetic Variational Approach (EnVarA) is taken, so that the PNP system could be reformulated as a non-constant mobility $H^{-1}$ gradient flow, with singular logarithmic energy potentials involved. In the proposed numerical algorithm, the mobility function is explicitly treated to ensure the unique solvability, while both the logarithmic and the electric potential diffusion terms are treated implicitly, because of their convex natures. The positivity-preserving property for both $n$ and $p$ are theoretically established, which is based on the subtle fact that, the singular nature of the logarithmic term around the value of $0$ prevents the numerical solution reaching the singular value. As a result, the numerical scheme is always well-defined. The energy stability of the numerical scheme comes from the convex nature of the energy functional in terms of $n$ and $p$, combined with their positivity property. In addition, an optimal rate convergence analysis is provided in this work. To overcome a well-known difficulty associated with the non-constant mobility, many highly non-standard estimates have to be involved, due to the nonlinear parabolic coefficients. The higher order asymptotic expansion, up to third order temporal accuracy and fourth order spatial accuracy, has to be performed with a careful linearization technique. Such a higher order asymptotic expansion enable one to obtain a rough error estimate, so that to the $\ell^\infty$ bound for $n$ and $p$ could be derived. This $\ell^\infty$ estimate yields the upper and lower bounds of the two variables, and these bounds have played a crucial role in the subsequent analysis. Finally, the refined error estimate are carried out to accomplish the desired convergence result. It the first work to combine three theoretical properties for any numerical scheme to the PNP system: unique solvability/positivity-preserving, energy stability and optimal rate convergence analysis. A few numerical results are also presented in this article, which demonstrates the robustness of the proposed numerical scheme.

	\section*{Acknowledgements} 
This work is supported in part by the National Science Foundation (USA) grants NSF DMS-1759535, NSF DMS-1759536 (C.~Liu), NSF DMS-2012669 
(C.~Wang),  NSF DMS-1719854, DMS-2012634 
(S.~Wise), National Natural Science Foundation of China 11971342 (X.~Yue), 21773165, Young Elite Scientist Sponsorship Program by Jiangsu Association for Science and Technology, Natural Science Foundation of Jiangsu Province, China,  and National Key R\&D Program of China 2018YFB0204404 (S. Zhou).

	\bibliographystyle{plain}
	\bibliography{pnp.bib}

\end{document}